\theoremstyle{plain}
\newtheorem{theorem}{Theorem}[section]
\newtheorem{proposition}[theorem]{Proposition}
\newtheorem{lemma}[theorem]{Lemma}
\newtheorem{question}[theorem]{Question}
\theoremstyle{definition} 
\newtheorem{definition}[theorem]{Definition}
\newtheorem{geoset}[theorem]{Geometric Setup}
\newtheorem{claim}[theorem]{Claim}
\newtheorem*{claim*}{Claim}
\theoremstyle{remark} 
\newtheorem{remark}[theorem]{Remark}
\numberwithin{equation}{section}
\newcommand{\ind}{\textup{Ind}}
\newcommand{\cK}{\mathcal{K}}
\newcommand{\cL}{\mathcal{B}}
\newcommand{\cB}{\mathcal{B}}
\newcommand{\cD}{\mathcal{D}}
\newcommand{\cS}{\mathcal{S}}
\newcommand{\dist}{\mathrm{dist}}
\newcommand{\sgn}{\mathrm{\, sgn}}
\newcommand{\R}{\mathbb{R}}
\newcommand{\Z}{\mathbb{Z}}
\newcommand{\tr}{\mathrm{tr}}
\newcommand{\C}{\mathbb{C}}
\newcommand{\sC}{\partial\widetilde M\times\mathbb{R}}
\newcommand{\op}{\mathrm{op}}
\newcommand{\Ch}{\mathrm{Ch}}
\newcommand{\STR}{\mathrm{STR}}
\newcommand{\ch}{\mathrm{ch}}
\newcommand{\cyc}{\mathrm{c}}
\newcommand{\per}{\mathrm{per}}
\newcommand{\cl}{\mathbb{C}\ell}
\newcommand{\hotimes}{\mathbin{\widehat{\otimes}}}
\newcommand{\spin}{\mathrm{Spin}}
\DeclareMathOperator{\End}{\mathrm{End}}
\mathchardef\mhyphen="2D
\let\latexchi\chi
\renewcommand\chi{\@ifnextchar_\sub@chi\latexchi}
\newcommand{\sub@chi}[2]{
	\@ifnextchar^{\subsup@chi{#2}}{\latexchi^{}_{#2}}%
}
\newcommand{\subsup@chi}[3]{
	\latexchi_{#1}^{#3}%
}
\begin{document}
	
\title{$l^1$-higher index, $l^1$-higher rho invariant and cyclic cohomology}

\author{Jinmin Wang}
\address[Jinmin Wang]{Department of Mathematics, Texas A\&M University}
\email{jinmin@tamu.edu}
\thanks{The first author is partially supported by NSF 1952693.}
\author{Zhizhang Xie}
\address[Zhizhang Xie]{ Department of Mathematics, Texas A\&M University }
\email{xie@math.tamu.edu}
\thanks{The second author is partially supported by NSF 1800737 and 1952693.}
\author{Guoliang Yu}
\address[Guoliang Yu]{ Department of
	Mathematics, Texas A\&M University}
\email{guoliangyu@math.tamu.edu}
\thanks{The third author is partially supported by NSF  2000082 and the Simons Fellows Program.}

\maketitle
\begin{abstract}
In this paper, we study $l^1$-higher index theory and its pairing with cyclic cohomology for both closed manifolds and compact manifolds with boundary. We first give a sufficient geometric condition for the vanishing of the $l^1$-higher indices of Dirac-type operators on closed manifolds. This leads us to define an $l^1$-version of higher rho invariants. We prove a product formula for these $l^1$-higher rho invariants.  A main novelty of  our product formula is that it works in the general Banach algebra setting, in particular, the $l^1$-setting. 

On compact spin manifolds with boundary, we also give a  sufficient geometric condition for Dirac operators to have well-defined $l^1$-higher indices. More precisely, we show that,  on a compact spin manifold $M$ with boundary equipped with a Riemannian metric which has product structure near the boundary,  if the scalar curvature on the boundary is sufficiently large,  then the $l^1$-higher index of its Dirac operator $D_M$ is well-defined and lies in the $K$-theory of the $l^1$-algebra of the fundamental group.  As an immediate corollary, we see that if the Bost conjecture holds for the fundamental group of $M$, then the $C^\ast$-algebraic higher index of $D_M$ lies in the image of the Baum-Connes assembly map. 

By pairing the above $K$-theoretic $l^1$-index results with cyclic cocycles, we prove an $l^1$-version of the higher Atiyah-Patodi-Singer index theorem for manifolds with boundary. A key ingredient of its proof is the product formula for $l^1$-higher rho invariants mentioned above. 
\end{abstract}
\section{Introduction}

The higher index for elliptic differential operators on closed manifolds is far-reaching generalization of the classical Fredholm index for Fredholm operators. Usually, the higher index an elliptic operators on a closed manifold takes values in the $K$-theory of the reduced or maximal group $C^*$-algebra of the fundamental group of the manifold.

Suppose $(X, g)$ is a closed spin manifold equipped with a Riemannian metric $g$. If the scalar curvature $\kappa$ of $g$ is positive, then it follows from the Lichnerowicz formula that the Dirac operator $D$ on $X$ is invertible, which implies that the higher index $\ind^\Gamma(D)$ of the Dirac operator $D$  vanishes 
 in $K_*(C^*_r(\Gamma))$ and $K_*(C^*_{\max}(\Gamma))$. Here $\Gamma = \pi_1(X)$ is the fundamental group of $X$, and  $C^*_r(\Gamma)$ (resp. $C^*_{\max}(\Gamma)$) is  the reduced (resp. maximal) group $C^*$-algebra of $\Gamma$.  This vanishing result has some interesting  geometric consequence. For example, it implies  the non-existence of positive scalar metrics on aspherical manifolds whose fundamental groups satisfy the strong Novikov conjecture, cf. \cite{Yu,Yucoarse}.

 A standard construction of higher indices shows that the higher index of an elliptic operator on a closed manifold can always be represented by $K$-theory elements with finite propagation (cf. Definition \ref{def:fp,lc,pl}). In particular, each such representative also lies in the $K$-theory $K_*(l^1(\Gamma))$ of the $l^1$-algebra of $\Gamma$. Let us denote the $l^1$-higher index class of an elliptic operator $D$ by $\ind_1^\Gamma(D)$.  We will review the construction of these $l^1$-higher indices  in Section \ref{sec:higherindexclosed}. Unlike the $C^\ast$-algebraic setting, due to the lack of positivity in $l^1$-algebras, it  remains an open question if the Lichnerowicz-type vanishing result still holds in the $l^1$-setting. 
 \begin{question}
 	Assume that $(X, g)$ is a closed spin Riemannian manifold equipped with a Riemannian metric $g$ whose scalar curvature is positive. Then  does the $l^1$-higher index $\ind_1^\Gamma\Gamma(D)$ of the Dirac operator $D$ on $X$ vanish in $K_*(l^1(\Gamma))$?
 \end{question}

Our first main theorem of the current paper is a partial answer to the above question. Namely, we show that, if the scalar curvature of $g$ is sufficiently large in an appropriate sense, then the $l^1$-higher index $\ind_1^\Gamma\Gamma(D)$ vanishes in $K_*(l^1(\Gamma))$. Before we state the theorem, let us fix some notation. 
Let $\widetilde X$ be the universal cover of $X$ and $\mathcal F$ be a fundamental domain of the $\Gamma$-action on $\widetilde X$. Fix a finite symmetric generating set $S$ of $\Gamma$. Let $\ell$ be the length function on $\Gamma$ induced by $S$. We define 
\begin{equation}\label{eq:comparison}
\tau=\liminf_{\ell(\gamma)\to\infty}\sup_{x\in\mathcal F}\frac{\dist(x,\gamma x)}{\ell(\gamma)}
\end{equation}
and
\begin{equation}\label{eq:rate}
K_\Gamma=\inf\{K:\exists C>0 \text{ s.t. }\#\{\gamma\in\Gamma:\ell(\gamma)\leqslant n\}\leqslant Ce^{Kn} \}.
\end{equation} 
\begin{theorem}\label{theorem:main1}
	Let $(X,g)$ be an $n$-dimensional closed spin Riemannian manifold. Denote the fundamental group $\pi_1(X)$ of $X$ by $\Gamma$. If the scalar curvature $\kappa$ of $g$ satisfies that
	\begin{equation}\label{eq:sufflarge}
	\inf_{x\in X}\kappa(x)>\frac{16K_\Gamma^2}{\tau^2},
	\end{equation}
	then the $l^1$-higher index $\ind_1^\Gamma(D)$ of the Dirac operator $D$ on $X$ vanishes in $K_n(l^1(\Gamma))$.
\end{theorem}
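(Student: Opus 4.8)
The plan is to extract a quantitative spectral gap for $D$ from the Lichnerowicz formula, represent $\ind_1^\Gamma(D)$ by a heat-semigroup idempotent, and show that because of the gap this idempotent converges \emph{in the $l^1$-norm} to a trivial projection as the time parameter goes to infinity, so the class is zero; the hypothesis \eqref{eq:sufflarge} is exactly the condition under which the Gaussian decay produced by the gap beats the exponential growth of $\Gamma$. First I would pass to the universal cover $\widetilde X$: the Lichnerowicz formula gives $D^2=\nabla^\ast\nabla+\tfrac{\kappa}{4}\geqslant\tfrac14\inf_X\kappa$, so \eqref{eq:sufflarge} yields $D^2>\tfrac{4K_\Gamma^2}{\tau^2}$. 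Hence $D$ is invertible on $\widetilde X$ and $\mathrm{spec}(D)\subseteq(-\infty,-c]\cup[c,\infty)$ for every $c<\tfrac12\sqrt{\inf_X\kappa}$, and we fix such a $c$ with $c>2K_\Gamma/\tau$.

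Next, using the description of $\ind_1^\Gamma(D)$ from Section~\ref{sec:higherindexclosed}, in the even-dimensional case the class equals $[e_t]-[e_{22}]$ for every $t>0$, where $e_t$ is the Connes--Moscovici graph idempotent built from the heat semigroup $e^{-tD^2}$; each entry of $e_t-e_{22}$ is a bounded operator times a function $f_t(D)$ with $f_t$ Schwartz and $|f_t(x)|\lesssim\mathrm{poly}(t)\,e^{-tx^2/2}$, and $e_t-e_{22}$ lies in $M_2$ of the $l^1$-version of the equivariant Roe algebra of $\widetilde X$ (legitimate because $f_t(D)$ has Gaussian off-diagonal kernel decay), an algebra whose $K$-theory is $K_\ast(l^1(\Gamma))$; the odd-dimensional case is analogous, e.g.\ by suspension. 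Now invertibility enters: since $\mathrm{spec}(D)$ avoids $(-c,c)$ we have $\|f_t(D)\|\lesssim\mathrm{poly}(t)\,e^{-tc^2/2}$, while the kernel of $f_t(D)$ decays like $e^{-d(x,y)^2/Ct}$ for a universal $C$. Interpolating these two estimates and summing the fundamental-domain decomposition — for which, by \eqref{eq:comparison} and \eqref{eq:rate}, the number of $\gamma\in\Gamma$ with $\dist(\mathcal F,\gamma\mathcal F)\leqslant r$ is at most $Ce^{(K_\Gamma/\tau+o(1))r}$ as $r\to\infty$ — one obtains a bound
\[
\|e_t-e_{22}\|_{l^1}\ \lesssim\ \mathrm{poly}(t)\, e^{-\delta t},\qquad \delta=\tfrac12\,c\,(c-2K_\Gamma/\tau)>0 .
\]
The exponent is negative precisely when $c>2K_\Gamma/\tau$, i.e.\ when $\tfrac12\sqrt{\inf_X\kappa}>2K_\Gamma/\tau$, i.e.\ when $\inf_X\kappa>16K_\Gamma^2/\tau^2$, which is \eqref{eq:sufflarge}. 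Hence $e_t\to e_{22}$ in the $l^1$-Roe algebra as $t\to\infty$, and therefore
\[
\ind_1^\Gamma(D)\ =\ \lim_{t\to\infty}\big([e_t]-[e_{22}]\big)\ =\ [e_{22}]-[e_{22}]\ =\ 0
\]
in $K_n(l^1(\Gamma))$.

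The crux — and the origin of the constant $16$ in \eqref{eq:sufflarge} — is the displayed $l^1$-estimate: the interpolation between the operator-norm bound $e^{-tc^2/2}$ and the Gaussian off-diagonal bound must be carried out with the optimal weight, and the exponential-growth bound \eqref{eq:rate} must be tracked carefully through the comparison constant \eqref{eq:comparison}, in order to reach the sharp threshold $c>2K_\Gamma/\tau$. The number $16=(2\cdot2)^2$ records two factors of $2$: the Lichnerowicz formula gives only $D^2\geqslant\tfrac14\inf_X\kappa$, so the gap half-width is $\tfrac12\sqrt{\inf_X\kappa}$ rather than $\sqrt{\inf_X\kappa}$; and the optimal interpolation upgrades the naive condition $c>K_\Gamma/\tau$ to $c>2K_\Gamma/\tau$. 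Making the argument rigorous also requires pinning down the precise functional-calculus model of $\ind_1^\Gamma(D)$, verifying that the heat-semigroup idempotents genuinely lie in matrices over the $l^1$-Roe algebra and represent a $t$-independent class there, and handling the odd-dimensional case (by suspension or via a $K_1$ heat-type representative).
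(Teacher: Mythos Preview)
Your approach is correct and essentially the same as the paper's: both show that a representative of the index class converges in $\|\cdot\|_1$ to a trivial element as $t\to\infty$, via exactly the interpolation you describe between the spectral-gap operator-norm bound and the Gaussian Fourier-side decay, arriving at the same threshold $c>2K_\Gamma/\tau$. The only packaging difference is that the paper works with the normalizing function $G_t$ from line~\eqref{eq:G(x)} and proves $G_t(\widetilde D)\to\sgn(\widetilde D)$ in $\|\cdot\|_1$ (Lemmas~\ref{lemma:G_tfinite} and \ref{lemma:G-sgn}), isolating the key estimate as Lemma~\ref{lemma:largespectralgap} and using finite propagation of $e^{i\xi\widetilde D}$ (Lemma~\ref{lemma:Fourier}) rather than pointwise heat-kernel bounds for the off-diagonal control; this has the minor advantage that it plugs directly into the paper's own definition of $\ind_1^\Gamma(D)$, so no separate verification that the heat idempotent represents the same class is needed.
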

The numbers $K_\Gamma$ and $\tau$ may depend on the choice of generating set, but Theorem \ref{theorem:main1} holds as long as the inequality $\eqref{eq:sufflarge}$ is satisfied for one choice of generating set. In particular, if the group $\Gamma$ has sub-exponential growth, i.e., $K_\Gamma=0$, then Theorem \ref{theorem:main1} holds as long as the scalar curvature is positive everywhere.

Recall that in the $C^\ast$-algebraic setting, the vanishing of  the higher index of an elliptic operator together with a specific trivialization  naturally induces a $C^\ast$-algebraic secondary invariant called higher rho invariant. These secondary invariants have various interesting applications in geometry and topology.  For example, a positive scalar curvature metric on a closed spin manifold naturally defines a higher rho invariant associated to the given metric.  The higher rho invariants for two positive scalar curvature metrics are the same if they are homotopic along a path of positive scalar curvature metric. Consequently, we can use higher rho invariants to distinguish different connected components of the space of positive scalar curvature metrics on a given closed spin manifold.  In the $l^1$-setting, under the condition that the scalar curvature is sufficiently large \eqref{eq:sufflarge}, we introduce an $l^1$-higher rho invariant, denoted by $\rho_1(g)$,  that lies in some geometric Banach algebra $\cB_{L,0}(\widetilde X)^\Gamma$, cf. Definition \ref{def:local,obst}. This $l^1$-higher rho invariant also remains unchanged for a continuous path of metrics, provided each metric has sufficiently large scalar curvature. For $C^\ast$-algebraic higher rho invariant, there is a product formula (cf. \cite{Xiepos} \cite{Zeidlerproduct}), which is important for geometric applications. In this paper,  we  prove a product formula for $l^1$-higher rho invariants. More precisely, we have the following theorem. Denote by $C^*_L(\R)$ the localization algebra of the real line and $\ind_L(D_\R)\in K_1(C^*_L(\R))$ the local index of the Dirac operator on $\R$, cf. \textup{\cite{Yulocalization}}. 
\begin{theorem}\label{thm:introproduct}
	Assume that $(X,g)$ is an $m$-dimensional closed spin Riemannian manifold such that the scalar curvature function $\kappa$ of $g$ satisfies the inequality in line \eqref{eq:sufflarge}. Then under the $K$-theory product map
	\begin{equation}
	_-\otimes _-\colon  K_*(\cB_{L,0}(\widetilde X)^\Gamma)\otimes K_1(C^*_L(\R))\to K_{*+1}(\cB_{L,0}(\widetilde X\times\R)^\Gamma),
	\end{equation}
	 we have
	\begin{equation}
	\rho_{1}(g)\otimes \ind_L(D_\R)=\rho_{1}(g+dt^2)\in K_{m+1}(\cB_{L,0}(\widetilde X\times\R)^\Gamma),
	\end{equation}
where $\cB_{L,0}(\widetilde X)^\Gamma$ is a certain geometric Banach algebra \textup{(}cf. Definition $\ref{def:local,obst}$\textup{)}, and $g+dt^2$ is the product metric on $X\times \mathbb R$ induced by $g$ and the standard Euclidean metric on $\mathbb R$.  
\end{theorem}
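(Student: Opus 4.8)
The plan is to prove the product formula by reducing it to a statement about local index maps and then invoking the functoriality of the Kasparov-type product in the Banach algebra setting. First I would recall the explicit geometric model for the $l^1$-higher rho invariant $\rho_1(g)$: under the hypothesis \eqref{eq:sufflarge}, the Dirac operator $\widetilde D$ on $\widetilde X$ has a spectral gap at zero of width controlled by the scalar curvature, and by Finite propagation speed estimates the resolvent (or, better, a normalizing function $\chi(\widetilde D)$ applied to $\widetilde D$) is approximated arbitrarily well by finite-propagation operators whose propagation is controlled by $\tau/(4K_\Gamma)$ times the spectral gap — which is exactly why one lands in the $l^1$-algebra and not merely in $C^*_r(\Gamma)$. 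The rho invariant $\rho_1(g)$ is then the class in $K_*(\cB_{L,0}(\widetilde X)^\Gamma)$ of the path $t \mapsto \chi(\widetilde D/t)$ (or a homotopy of such), which is a genuine idempotent/unitary over the obstruction algebra precisely because of the gap.

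The key step is to identify the product $\rho_1(g)\otimes \ind_L(D_\R)$ at the level of operators. On $\widetilde X\times \R$, the Dirac operator for the product metric decomposes as $\widetilde D \hotimes 1 + 1 \hotimes D_\R$ (an external product of Dirac operators), and the Lichnerowicz formula on the product gives $(\widetilde D\hotimes 1 + 1\hotimes D_\R)^2 = \widetilde D^2 \hotimes 1 + 1 \hotimes D_\R^2 + \tfrac14\kappa$, where the scalar curvature is unchanged since $\R$ is flat. So the same lower bound \eqref{eq:sufflarge} persists and $\rho_1(g+dt^2)$ is defined by the same recipe applied to the product Dirac operator. The plan is then to show, using the standard Mayer--Vietoris / cutting-and-pasting argument for localization algebras (as in Yu's localization algebra formalism, \cite{Yulocalization}), that the external Kasparov product of the rho-class of $\widetilde D$ with the local index class $\ind_L(D_\R)\in K_1(C^*_L(\R))$ is computed by the external product operator, i.e.\ that the $K$-theory product map $_-\otimes_-$ sends $[\chi(\widetilde D/t)]\otimes[\ind_L(D_\R)]$ to $[\chi((\widetilde D\hotimes 1+1\hotimes D_\R)/t)]$. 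This is where one must be careful: the product map has to be constructed at the Banach-algebra level on the geometric algebras $\cB_{L,0}$, and one must check that all the approximating finite-propagation operators compose to finite-propagation operators with the right propagation bound on the product space.

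The main technical device I would use to make the identification is a homotopy argument interpolating between the external product operator and the ``rescaled'' family that literally represents $\rho_1(g+dt^2)$ — concretely, rotating $\widetilde D\hotimes 1 + 1\hotimes D_\R$ through $\cos\theta\,(\widetilde D\hotimes 1) + \sin\theta\,(1\hotimes D_\R)$ is not quite right, but a Bott-periodicity-style homotopy that scales the $\R$-direction while keeping the spectral gap open does the job, exactly as in the $C^*$-algebraic product formula of \cite{Xiepos,Zeidlerproduct}. The point is that every operator appearing along this homotopy still satisfies the uniform lower bound from \eqref{eq:sufflarge} (because adding the nonnegative operator coming from the $\R$-direction only helps), so the homotopy stays inside the obstruction algebra $\cB_{L,0}(\widetilde X\times\R)^\Gamma$, and the propagation estimates are uniform along it. Thus the product formula follows from homotopy invariance of $K$-theory once the product map is set up.

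The hard part will be the Banach-algebra bookkeeping: unlike in the $C^*$-setting, one cannot freely use functional calculus beyond holomorphic functional calculus, and one must verify that the specific functions of $\widetilde D$ used to build $\rho_1(g)$ (and of the product operator) lie in — and give well-defined, homotopy-invariant classes over — the $l^1$-type geometric algebras with the correct propagation control, and that forming the Kasparov-type product does not destroy the $l^1$-summability/propagation bounds. Concretely, the obstacle is to show that convolution of the finite-propagation approximants on $\widetilde X$ with those representing $\ind_L(D_\R)$ on $\R$ stays norm-controlled in the $l^1$-crossed-product norm of $\Gamma$ acting on $\widetilde X\times\R$, uniformly enough to carry the homotopy through. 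Once that is established, the algebraic identity of operators plus homotopy invariance closes the argument; I expect the localization-algebra cutting argument and the propagation estimates to be routine given the machinery of Section~\ref{sec:higherindexclosed}, but the uniform Banach-norm control along the homotopy is the crux.
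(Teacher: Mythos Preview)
Your proposal has a genuine gap at exactly the point you flag as ``the crux.'' You propose to follow the $C^\ast$-algebraic product formula arguments of \cite{Xiepos,Zeidlerproduct} and carry them over to the Banach setting by checking that the propagation and $l^1$-norm estimates survive the homotopy. But the paper states explicitly (end of the paragraph following Theorem~\ref{thm:introproduct}) that those existing $C^\ast$-approaches \emph{do not seem to generalize} to the Banach algebra setting, and its proof is built around a different mechanism. The issue is not merely bookkeeping: the $C^\ast$-proofs rely on continuous functional calculus and on reading off invertibility from the $C^\ast$-spectrum, neither of which is available in $l^1_K(\Gamma)$, where the spectrum can be strictly larger and only holomorphic functional calculus applies. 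Your ``Bott-periodicity-style homotopy that scales the $\R$-direction'' is not specified, and the operators one would write down along such a homotopy in the $C^\ast$-proofs are not a priori invertible in the $l^1$-completion.

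What the paper actually does is replace the normalizing-function model of $\rho_1(g)$ by a Cayley-transform model $v_s=(\widetilde D+si)(\widetilde D-si)^{-1}$ (Lemma~\ref{lm:con-est}), which is rational in $\widetilde D$ and hence amenable to holomorphic functional calculus; the $l^1$-membership of the resolvents is obtained not from spectral theory but from the integral representation $(s^2\widetilde D^2+1)^{-1}=\int_0^\infty e^{-y(s^2\widetilde D^2+1)}dy$ together with the Gaussian $l^1$-estimates of Lemma~\ref{lemma:largespectralgap}. The product formula is then proved (Theorem~\ref{thm:product}) via an explicit algebraic homotopy $b_r(s)$ built from powers $(s^{-2}\slashed D^2+1)^{-r/2}$ of the resolvent, and the decisive step---showing the relevant conjugating element is invertible in the Banach algebra---uses a spectral-radius argument (the Claim inside the proof of Lemma~\ref{lm:projrepn}) to exclude the negative real axis from the spectrum of $S_1S_2$, so that $\sqrt{\,\cdot\,}$ is holomorphic there. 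None of these ingredients appear in your outline; without them, the passage from ``the spectral gap persists'' to ``the homotopy stays inside $\cB_{L,0}$ with uniform $l^1$-control'' is exactly where the $C^\ast$-argument breaks and a new idea is required.
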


When applied to the $C^\ast$-algebraic setting, our approach also proves the product formula for $C^\ast$-algebraic higher rho invariants. However, a main novelty of our approach is that it applies to the general Banach algebra setting, while the existing approaches in the $C^\ast$-algebra setting  (cf. \cite{Xiepos} \cite{Zeidlerproduct}) do not seem  to generalize to the Banach algebra setting.

Now we turn to the case of manifolds with boundary. Given a compact spin manifold $M$ with boundary, if $M$ is equipped with a Riemannian metric which has product structure near the boundary and the scalar curvature is positive on the boundary, then the Dirac operator $D$ on $M$ defines a higher index $\ind^\Gamma(D)$ which lies in the $K$-theory of the reduced group $C^\ast$-algebra of the fundamental group $\Gamma = \pi_1(M)$ of $M$, cf. Section \ref{sec:1}. It is an open question  whether such a higher index lies in the image of the Baum-Connes assembly map. Let us briefly recall the definition of the Baum-Connes assembly map.  Denote by $\underline{E}\Gamma$  the classifying space of $\Gamma$ for proper actions, and $K_*^\Gamma(\underline{E}\Gamma)$ its equivariant $K$-homology group. Each element of $K_*^\Gamma(\underline{E}\Gamma)$ can be represented by a $\Gamma$-equivariant Dirac operator (possibly twisted by an auxiliary vector bundle)  on a complete $spin^c$ manifold (without boundary) that is equipped with an isometric, proper and cocompact $\Gamma$-action.   The Baum-Connes assembly map
\[ \mu\colon  K_*^\Gamma(\underline{E}\Gamma)\longrightarrow K_*(C_r^*(\Gamma)) \]
is defined by mapping each twisted $\Gamma$-equivariant Dirac operator to its associated higher index.   

The Baum--Connes conjecture claims that the assembly map
is an isomorphism. The Baum-Connes conjecture has been verified for a large class of groups, including groups with Haagerup property \cite{MR1821144} and hyperbolic groups \cite{L1,MR1914618}. In general, the conjecture is still wide open. In fact, the following more special question is still wide open. 
\begin{question}\label{q:main}
Given  a compact spin manifold with boundary whose Riemannian metric  has product structure and positive scalar curvature near the boundary, does the higher index of its Dirac operator  lie in the image of the Baum-Connes assembly map? 
\end{question} 
 A positive answer to the above question can be used to compute certain secondary index theoretic invariants, such as delocalized eta invariants and higher rho invariants  associated to positive scalar curvature metrics on the boundary of a spin manifold,  cf. \cite{WXY,Xie}.

Although we are mainly interested in Question $\ref{q:main}$ in the $C^\ast$-algebraic setting, the main tools we use in this paper for tackling such a question  are in fact from the $l^1$-setting. Let us briefly explain how the $l^1$-setting and the $C^\ast$-algebraic setting are related.  Recall that the algebra of $l^1$-functions on $\Gamma$ is a Banach algebra and a dense subalgebra of $C^*_r(\Gamma)$. There is an analogue of the Baum-Connes conjecture in the $l^1$-setting, called the Bost conjecture,   which states that
\begin{equation}\label{eq:bost}
	\mu_1 \colon  K_*^\Gamma(\underline{E}\Gamma)\longrightarrow K_*(l^1(\Gamma))
\end{equation}
is an isomorphism. Here we have used that fact the standard construction of higher indices of twisted Dirac operators on  $spin^c$ $\Gamma$-manifolds (without boundary) implies that these higher indices  can be represented by elements with finite propagation (cf. Definition \ref{def:fp,lc,pl}), hence in particular lie in $K_*(l^1(\Gamma))$. Consequently,  the Baum-Connes assembly map  $\mu\colon  K_*^\Gamma(\underline{E}\Gamma)\to K_*(C_r^*(\Gamma))$ factors through the Bost assembly map 	$\mu_1 \colon  K_*^\Gamma(\underline{E}\Gamma)\to K_*(l^1(\Gamma))$ as follows: 
\[  K_*^\Gamma(\underline{E}\Gamma)\to  K_*(l^1(\Gamma)) \to K_*(C_r^*(\Gamma)), \]
where the second homomorphism is induced by the natural inclusion \[ l^1(\Gamma) \hookrightarrow C^*_r(\Gamma). \] 

One advantage of the $l^1$-setting is that the Bost conjecture has been proved for a larger class of groups, which in particular includes all lattices in reductive Lie groups  \cite{L1}, while  it is  not yet known whether or not the Baum-Connes conjecture holds for $\textup{SL}_3(\mathbb Z)$. On the other hand, the Baum-Connes conjecture is more directly applicable to geometry and topology. In some sense, a main strategy of this paper is to work in a geometric setup where both conjectures intersect, that is, a geometric setup where methods from both the $C^\ast$-algebraic setting  and the  $l^1$-setting apply. In particular, our next theorem states that if the scalar curvature on the boundary of a spin manifold is sufficiently large (in the sense of the inequality from line $\eqref{eq:sufflarge}$), then the higher index of the associated Dirac operator, which is a prior an element in $K_\ast(C^\ast_r(\Gamma))$, actually lies in $K_\ast(l^1(\Gamma))$.

Before we state the theorem, let us fix some notation. 
Let $\widetilde M$ be the universal cover of $M$ and $\mathcal F_\partial$ be a fundamental domain of the $\Gamma$-action on $\partial\widetilde M$. Fix a finite symmetric generating set $S$ of $\Gamma$. Let $\ell$ be the length function on $\Gamma$ induced by $S$. We define 
\begin{equation}\label{eq:tauboundary}
	\tau_\partial=\liminf_{\ell(\gamma)\to\infty}\sup_{x\in\mathcal F_\partial}\frac{\dist(x,\gamma x)}{\ell(\gamma)}
\end{equation}
and $K_\Gamma$ as in line \eqref{eq:rate}.
\begin{theorem}\label{theorem:main}
	Let $M$ be an $n$-dimensional compact spin manifold with boundary. Suppose $M$ is equipped with a Riemannian metric $g$ that has product structure and positive scalar curvature near the boundary. Denote  the fundamental group $\pi_1(M)$ of $M$ by $\Gamma$. If the scalar curvature $\kappa$ on $\partial M$ satisfies that
	\begin{equation}\label{eq:sufflarg}
\inf_{x\in\partial M}\kappa(x)>\frac{16K_\Gamma^2}{\tau^2},
	\end{equation}
	then the $C^*$-algebraic higher index $\ind^\Gamma(D)$ of the Dirac operator $D$  on $M$ admits a natural preimage $\ind^\Gamma_1(D)$ in $K_n(l^1(\Gamma))$ under the homomorphism 
		 \[ K_n(l^1(\Gamma)) \to K_n(C_r^*(\Gamma)) \]
		 induced by the inclusion $l^1(\Gamma) \hookrightarrow C^*_r(\Gamma)$. 
\end{theorem}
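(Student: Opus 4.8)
The plan is to mimic, at the level of $l^1$-completions, the standard construction of the higher index on a manifold with boundary recalled in Section \ref{sec:1}, with the hypothesis \eqref{eq:sufflarg} supplying exactly the quantitative input that makes the analysis converge in $l^1$-norm. Form the complete manifold $\widehat M = M\cup_{\partial M}(\partial M\times[0,\infty))$, extending $g$ by the product metric on the half-cylinder, and let $\widetilde{\widehat M}$ be its universal cover with the induced $\Gamma$-action. On the cylinder the Dirac operator satisfies $D_{\widehat M}^2 = -\partial_t^2+D_{\partial M}^2\geqslant D_{\partial M}^2\geqslant \kappa_0/4$ by the Lichnerowicz formula, where $\kappa_0=\inf_{\partial M}\kappa$; thus $D_{\widehat M}$ is invertible away from the compact piece $\widetilde M\subset\widetilde{\widehat M}$, with uniform spectral gap $\sqrt{\kappa_0}/2$ there. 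In the $C^\ast$-setting one then produces $\ind^\Gamma(D)$ from a normalizing function $\chi$ of $D_{\widehat M}$: the operator $\chi(D_{\widehat M})^2-1$ lies, up to a norm-small remainder, in the ideal of $\Gamma$-equivariant finite-propagation operators on $\widetilde{\widehat M}$ supported near $\widetilde M$, so $\chi(D_{\widehat M})$ is invertible in the quotient and $\ind^\Gamma(D)$ is its image under the associated connecting map; the ideal has $K$-theory $K_*(C^\ast_r(\Gamma))$. We carry out the same construction after replacing every algebra by its $l^1$-completion, so that the ideal instead has $K$-theory $K_*(l^1(\Gamma))$.

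The one nontrivial point is that $\chi(D_{\widehat M})$ is still invertible in the $l^1$-quotient. A Gaussian-type normalizing function of $D_{\widehat M}$ has infinite propagation, hence does not even belong to the $l^1$-algebra, so — exactly as in the proof of Theorem \ref{theorem:main1} — one must use finite-propagation approximations. Take $\chi(x)=\frac{2}{\sqrt\pi}\int_0^x e^{-u^2}\,du$ and, for small $\varepsilon>0$, let $u_\varepsilon$ be obtained from $\chi(\varepsilon^{-1}D_{\widehat M})$ by truncating a wave-operator representation so that $\|u_\varepsilon-\chi(\varepsilon^{-1}D_{\widehat M})\|\leqslant\delta$; then $u_\varepsilon$ has propagation $R\sim\varepsilon^{-1}\sqrt{\log(1/\delta)}$. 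Away from $\widetilde M$, the Gaussian decay of $1-\chi^2$ together with the spectral gap gives $\|1-\chi(\varepsilon^{-1}D_{\widehat M})^2\|\leqslant e^{-\kappa_0/(4\varepsilon^2)}$, up to a fixed polynomial factor; choosing $\delta=e^{-\kappa_0/(4\varepsilon^2)}$ then yields $R\sim\tfrac12\sqrt{\kappa_0}\,\varepsilon^{-2}$, and the square $u_\varepsilon^2$ has propagation $\sim\sqrt{\kappa_0}\,\varepsilon^{-2}$. Now a $\Gamma$-equivariant operator of propagation $\rho$ is supported, in the group direction, on elements $\gamma$ with $\dist(x,\gamma x)\lesssim\rho$; these satisfy $\ell(\gamma)\lesssim\rho/\tau_\partial$ by \eqref{eq:tauboundary}, there are at most $Ce^{K_\Gamma\rho/\tau_\partial}$ of them by \eqref{eq:rate}, and the norm of each group component is controlled by the operator norm. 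Hence, away from $\widetilde M$, the $l^1$-norm of $u_\varepsilon^2-1$ is dominated by
\begin{equation*}
e^{-\kappa_0/(4\varepsilon^2)}\cdot e^{K_\Gamma\sqrt{\kappa_0}/(\tau_\partial\varepsilon^2)}=\exp\Bigl(\tfrac{1}{\varepsilon^2}\bigl(-\tfrac{\kappa_0}{4}+\tfrac{K_\Gamma\sqrt{\kappa_0}}{\tau_\partial}\bigr)\Bigr),
\end{equation*}
which tends to $0$ as $\varepsilon\to 0$ precisely when $\kappa_0>16K_\Gamma^2/\tau_\partial^2$, that is, under \eqref{eq:sufflarg}. (The doubling of propagation on passing to the square is what turns the naive threshold $4K_\Gamma^2/\tau_\partial^2$ into $16K_\Gamma^2/\tau_\partial^2$.) Since near $\widetilde M$ the operator $u_\varepsilon^2-1$ is genuinely supported in a bounded neighbourhood of $\widetilde M$, it follows that for $\varepsilon$ small enough $u_\varepsilon^2-1$ lies in the $l^1$-ideal, i.e. $u_\varepsilon$ is invertible in the $l^1$-quotient; its class there is independent of $\varepsilon$ and of the truncation, and applying the connecting map of the $l^1$-ideal sequence produces a well-defined class $\ind_1^\Gamma(D)\in K_n(l^1(\Gamma))$.

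Compatibility with the $C^\ast$-algebraic index is then formal: the inclusion $l^1(\Gamma)\hookrightarrow C^\ast_r(\Gamma)$ induces a morphism between the two ideal exact sequences under which $u_\varepsilon$ maps to the same operator, now viewed as an admissible finite-propagation approximation in the $C^\ast$-construction, so by naturality of the connecting maps $\ind_1^\Gamma(D)$ is sent to $\ind^\Gamma(D)$ in $K_n(C^\ast_r(\Gamma))$. Independence of $\ind_1^\Gamma(D)$ from the remaining choices — the collar length, the particular normalizing function, and the cutoffs defining the ideal — follows from homotopy arguments internal to the $l^1$-algebras, each homotopy being controlled by the same estimate.

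The crux of the argument, and the reason the hypothesis asks for large scalar curvature rather than mere positivity, is exactly this passage from operator-norm smallness to $l^1$-smallness: the parametrix remainders decay exponentially in the propagation radius $\rho$ because of the spectral gap of $D_{\partial M}$, but in $l^1$-norm they are weighted by the number $\#\{\gamma:\ell(\gamma)\lesssim\rho/\tau_\partial\}\leqslant Ce^{K_\Gamma\rho/\tau_\partial}$ of group elements within radius $\rho$, and one needs the decay to win. Achieving this balance is the content of \eqref{eq:sufflarg}, and the technically delicate part is to check that it persists uniformly along all the homotopies needed to define the class and establish its properties — the same uniform control that the proof of Theorem \ref{theorem:main1} is designed to provide and that the present argument borrows.
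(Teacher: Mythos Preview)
Your overall strategy is the same as the paper's: pass to the complete manifold $\widetilde M_\infty$, take a finite-propagation approximation $u_\varepsilon$ of a Gaussian normalizing function of $\widetilde D$, and show that on the cylindrical end the defect $u_\varepsilon^2-1$ is small in the $l^1$-norm, so that after a cutoff $\Psi_T$ the standard index recipe goes through in the $l^1$-completions. That is exactly what Section~\ref{sec:2} does, separating the even case (Claim~\ref{prop:q^2-q 1-norm}) from the odd case (Claim~\ref{prop:invertible}).

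The genuine gap is in your $l^1$-estimate. You bound the $l^1$-norm of the cylindrical remainder by
\[
(\text{operator norm})\times\#\{\gamma:\ell(\gamma)\lesssim \rho/\tau_\partial\},
\]
with $\rho$ the propagation of $u_\varepsilon^2$. This crude counting bound, done with the correct propagation, does \emph{not} produce the threshold $16K_\Gamma^2/\tau_\partial^2$. Indeed, the Fourier tail of $\chi(\varepsilon^{-1}\,\cdot\,)'$ beyond $R$ is $\sim e^{-\varepsilon^2R^2/4}$, so matching it to $\delta=e^{-\kappa_0/(4\varepsilon^2)}$ gives $R\sim 2\varepsilon^{-1}\sqrt{\log(1/\delta)}=\sqrt{\kappa_0}\,\varepsilon^{-2}$, not $\tfrac12\sqrt{\kappa_0}\,\varepsilon^{-2}$ as you wrote; your missing factor of $2$ is what makes your exponent come out right. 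With the correct $R$, the square has propagation $\sim 2\sqrt{\kappa_0}\,\varepsilon^{-2}$ and your crude bound becomes
\[
\exp\Big(\tfrac{1}{\varepsilon^{2}}\big(-\tfrac{\kappa_0}{4}+\tfrac{2K_\Gamma\sqrt{\kappa_0}}{\tau_\partial}\big)\Big),
\]
which tends to $0$ only when $\kappa_0>64K_\Gamma^2/\tau_\partial^2$. To reach the stated constant $16$ you need the finer per-group-element estimate that the paper isolates in Lemma~\ref{lemma:Fourier} and Lemma~\ref{lemma:largespectralgap}: for each $\gamma$, the $\gamma$-component of $f(t\widetilde D_{\cyc})$ is controlled by the Fourier tail of $f$ beyond $\tau_\partial\ell(\gamma)/t$, not by the global operator norm; summing these individual Gaussian bounds against the growth $e^{K_\Gamma\ell(\gamma)}$ and optimizing the split between ``small $\ell(\gamma)$'' (use the spectral gap) and ``large $\ell(\gamma)$'' (use the Fourier tail) is precisely what yields $\sigma>\tfrac{2K_\Gamma}{\tau_\partial}$, i.e.\ $\kappa_0>16K_\Gamma^2/\tau_\partial^2$. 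This refined interpolation is the substance of the proof of Claim~\ref{prop:q^2-q 1-norm} (via the estimate \eqref{eq:Ft-Gt} and Lemma~\ref{lemma:G-sgn}) and cannot be replaced by the uniform bound you use.
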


As before, the numbers $K_\Gamma$ and $\tau_\partial$ may depend on the generating set, but Theorem \ref{theorem:main1} holds as long as the inequality $\eqref{eq:sufflarg}$ is satisfied for one choice of generating set. In particular, if the group $\Gamma$ has sub-exponential growth, i.e., $K_\Gamma=0$, then Theorem \ref{theorem:main} holds as long as the scalar curvature on the boundary is positive.

As an application of Theorem $\ref{theorem:main}$,  we have the following theorem, which gives a partial positive answer to Question $\ref{q:main}$ in the geometric setup of Theorem $\ref{theorem:main}$. 
\begin{theorem}
	Let $M$ be an $n$-dimensional compact spin manifold with boundary $\partial M$. Suppose $M$ is equipped with a Riemannian metric $g$ that has product structure near the boundary such that the scalar curvature $\kappa$ of $g$ on $\partial M$ satisfies that
	$$\inf_{x\in\partial M}\kappa(x)>\frac{16K_\Gamma^2}{\tau^2_\partial}.$$
If the Bost conjecture holds for $\Gamma = \pi_1(M)$, then the $C^*$-algebraic higher index $\ind^\Gamma(D)$ of  the Dirac operator $D$ on $M$ lies in the image of the Baum--Connes assembly map.
\end{theorem}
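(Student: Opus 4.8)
The plan is to deduce this statement directly from Theorem \ref{theorem:main} together with the factorization of the Baum--Connes assembly map through the Bost assembly map explained in the introduction. First I would invoke Theorem \ref{theorem:main}: since $M$ has product structure near the boundary and $\inf_{x\in\partial M}\kappa(x)>16K_\Gamma^2/\tau_\partial^2$, the $C^\ast$-algebraic higher index $\ind^\Gamma(D)$ admits a preimage $\ind_1^\Gamma(D)\in K_n(l^1(\Gamma))$ under the map $\iota_*\colon K_n(l^1(\Gamma))\to K_n(C_r^*(\Gamma))$ induced by the inclusion $\iota\colon l^1(\Gamma)\hookrightarrow C_r^*(\Gamma)$, so $\iota_*\big(\ind_1^\Gamma(D)\big)=\ind^\Gamma(D)$.

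Next, since the Bost conjecture is assumed to hold for $\Gamma=\pi_1(M)$, the Bost assembly map $\mu_1\colon K_*^\Gamma(\underline{E}\Gamma)\to K_*(l^1(\Gamma))$ is an isomorphism; in particular it is surjective, so there exists a class $\xi\in K_n^\Gamma(\underline{E}\Gamma)$ with $\mu_1(\xi)=\ind_1^\Gamma(D)$. Recalling that the Baum--Connes assembly map factors as $\mu=\iota_*\circ\mu_1$, we obtain
\[ \mu(\xi)=\iota_*\big(\mu_1(\xi)\big)=\iota_*\big(\ind_1^\Gamma(D)\big)=\ind^\Gamma(D), \]
so $\ind^\Gamma(D)$ lies in the image of $\mu$, as claimed.

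The substantive content of the theorem is thus entirely carried by Theorem \ref{theorem:main}; once the higher index is known to lift to $K_n(l^1(\Gamma))$, the remainder is a formal diagram chase using the compatibility of the two assembly maps. Accordingly, there is no serious obstacle in this deduction itself: the genuine difficulty—showing that sufficiently large scalar curvature on $\partial M$ forces the $C^\ast$-index to originate in the $l^1$-algebra—is precisely what is handled in the proof of Theorem \ref{theorem:main}, through the construction of the $l^1$-higher index for manifolds with boundary and the quantitative estimates relating the curvature lower bound to the comparison constants $\tau_\partial$ and $K_\Gamma$.
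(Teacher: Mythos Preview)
Your proposal is correct and follows exactly the approach the paper intends: the theorem is presented in the introduction as an immediate application of Theorem~\ref{theorem:main}, and the deduction you give---lift $\ind^\Gamma(D)$ to $K_n(l^1(\Gamma))$ via Theorem~\ref{theorem:main}, then use surjectivity of $\mu_1$ and the factorization $\mu=\iota_*\circ\mu_1$---is precisely that application.
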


Furthermore, by applying the techniques we develop for proving Theorem $\ref{theorem:main}$, we shall prove an $l^1$-version of the higher  Atiyah-Patodi-Singer index formula. Roughly speaking, this $l^1$-higher  Atiyah-Patodi-Singer index formula states that, for a given compact spin manifold $M$ with boundary such that its Dirac operator is invertible on the boundary,  the pairing between the higher index of the Dirac operator and a cyclic cohomology class $\varphi$ of $\mathbb C\Gamma$ is equal to the sum of the integral of a local expression on $M$ and a higher eta invariant from the boundary. In particular, if $\varphi$ is a delocalized cyclic $n$-cocycle of $\mathbb C\Gamma$ (cf. Definition $\ref{def:delocal}$),
then the local term in such a pairing vanishes, thus the index pairing is equal to a higher eta invariant from the boundary in this case.  

To state our $l^1$-higher Atiyah-Patodi-Singer index theorem, we shall need the following notion of exponential growth rate for cyclic cocycles of $\Gamma$. We say a cyclic $n$-cocycle $\varphi$ of $\Gamma$  has at most exponential growth with respect to a given length function $\ell$ on $\Gamma$ if there exist $K>0$ and $C>0$ such that
 $$|\varphi(\gamma_0,\gamma_1,\cdots,\gamma_n)|\leqslant Ce^{K (\ell(\gamma_0)+\ell(\gamma_1)+\cdots+\ell(\gamma_n))},\ \forall \gamma_i\in\Gamma.$$
In this case, the infimum of such constants $K$ is called the exponential growth rate of $\varphi$ and will be denoted by $K_\varphi$ from now on.

\begin{theorem}\label{theorem:aps}
	Let $M$ be a compact spin manifold with boundary $\partial M$ and $D$ the Dirac operator on $M$. Suppose $\varphi$ is a delocalized cyclic cocycle of $\Gamma = \pi_1(M)$ that has exponential growth rate $K_\varphi$. If $M$ is equipped with a Riemannian metric $g$ that has product structure near the boundary such that the scalar curvature $\kappa$ of $g$ on $\partial M$ satisfies that
	\begin{equation}\label{eq:sufflarge2}
\inf_{x\in\partial M}\kappa(x)>\frac{16(K_\Gamma+K_\varphi)^2}{\tau^2_\partial} 
	\end{equation}
	then we have
	\begin{equation}\label{eq:aps}
\ch_\varphi(\ind^\Gamma(D))=-\frac 1 2\eta_\varphi(D_\partial),
	\end{equation}
	where $\ch_\varphi(\ind^\Gamma(D))$ is the pairing between $\varphi$ and the Connes-Chern character of the higher index $\ind^\Gamma(D)$, and $\eta_\varphi(D_\partial)$ is the higher eta invariant \textup{(}with respect to $\varphi$\textup{)} of the Dirac operator $D_\partial$ on $\partial M$. 
\end{theorem}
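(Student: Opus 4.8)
The plan is to reduce the higher APS formula to the relative index theory set up by Theorem \ref{theorem:main} together with the product formula of Theorem \ref{thm:introproduct}, paired against the cyclic cocycle $\varphi$. First, I would observe that the hypothesis \eqref{eq:sufflarge2} is exactly the inequality \eqref{eq:sufflarg} but with $K_\Gamma$ replaced by $K_\Gamma + K_\varphi$; the point of this strengthening is that, after passing to the $l^1$-picture, one still has enough decay to make the pairing with a cyclic cocycle of exponential growth rate $K_\varphi$ convergent. Concretely, the index $\ind^\Gamma(D)$ a priori lives in $K_n(C^*_r(\Gamma))$, but by Theorem \ref{theorem:main} (applied with the comparison constant $K_\Gamma + K_\varphi$ in place of $K_\Gamma$) it lifts canonically to an element $\ind^\Gamma_1(D) \in K_n(l^1(\Gamma))$; moreover, tracing through the construction, this lift is represented by an idempotent/invertible with propagation controlled so that its entries decay like $e^{-(K_\Gamma+K_\varphi)\ell(\gamma)} \cdot (\text{something summable})$. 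This is what allows $\varphi$ to be evaluated on it and defines $\ch_\varphi(\ind^\Gamma(D))$ unambiguously.

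Next I would set up the boundary contribution. The hypothesis that $D_\partial$ is invertible on $\partial M$ (guaranteed by $\kappa > 0$ on $\partial M$ via Lichnerowicz) means $D_\partial$ has an $l^1$-higher rho invariant $\rho_1(g|_{\partial M})$, or rather one works with the associated relative K-theory class: the pair (index of $D$ on $M$, trivialization coming from invertibility near the boundary) defines a class in the relative group fitting into the Higson--Roe type long exact sequence $K_*(C^*_L(\widetilde M)^\Gamma) \to K_*(C^*_{L,0}) \to \cdots$, adapted to the Banach algebra setting. The higher eta invariant $\eta_\varphi(D_\partial)$ is by definition the pairing of $\varphi$ (suitably transgressed/suspended) with this rho-type invariant; making this definition and its convergence precise under the exponential-growth bound on $\varphi$ is where the $l^1$-control from \eqref{eq:sufflarge2} is used a second time. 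The $-\frac12$ is the standard normalization factor coming from the boundary-of-half-line contribution, exactly as in the classical APS and its higher analogues.

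The core of the argument is then a Mayer--Vietoris / mapping-cone chase: one decomposes $M = M_0 \cup_{\partial M} (\partial M \times [0,1))$, represents $\ind^\Gamma(D)$ via the localization algebra of $\widetilde M$, and uses that on the cylindrical end the operator is a product $D_\partial \otimes 1 + 1 \otimes D_{[0,\infty)}$. Here Theorem \ref{thm:introproduct} enters decisively: it identifies the rho class of the product metric $g_\partial + dt^2$ on $\partial M \times \R$ with $\rho_1(g_\partial) \otimes \ind_L(D_\R)$, which is precisely the bookkeeping needed to match the cylindrical-end contribution to the index with the higher eta invariant of $D_\partial$. Pairing the whole relative K-theory identity with $\varphi$, the cocycle property of $\varphi$ collapses the interior contributions into $\int_M$ of a local Chern--Weil form, which vanishes because $\varphi$ is \emph{delocalized} (Definition \ref{def:delocal}) — the local term only sees the conjugacy class of the identity, on which a delocalized cocycle is zero — leaving only the boundary term $-\frac12 \eta_\varphi(D_\partial)$.

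The main obstacle I anticipate is entirely analytic rather than formal: establishing that all the pairings in sight actually converge and are homotopy/representative independent in the \emph{Banach} (i.e. $l^1$) setting, where one lacks the holomorphic functional calculus and positivity tricks available for $C^*$-algebras. In particular, one must carefully track propagation and exponential decay estimates through (i) the construction of $\ind^\Gamma_1(D)$ from Theorem \ref{theorem:main}, (ii) the product formula of Theorem \ref{thm:introproduct} — which the authors emphasize is proved by a method robust enough for Banach algebras — and (iii) the transgression formula relating the index and rho classes on the cylinder, ensuring at each stage that the growth budget $K_\Gamma + K_\varphi$ is not exceeded. Once these quantitative estimates are in place, the identity \eqref{eq:aps} follows from the relative-K-theory bookkeeping described above.
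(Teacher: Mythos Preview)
Your proposal is correct in spirit and identifies the right ingredients --- the $l^1$-lift of the index (Theorem~\ref{theorem:main} with $K=K_\varphi$), the product formula (Theorem~\ref{thm:introproduct}), a Mayer--Vietoris/boundary-map computation, and the vanishing of local terms by delocalization --- but it organizes them differently from the paper's main proof and blends in a second argument that the paper keeps separate.

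The paper's main proof (Theorem~\ref{thm:aps}) is purely $K$-theoretic and never produces a local Chern--Weil term. It does \emph{not} apply Mayer--Vietoris to $M$. Instead it invokes the boundary map $\partial$ of the localization short exact sequence
\[
0 \longrightarrow \cB_{L,0}(\partial\widetilde M)^\Gamma_{K_\varphi} \longrightarrow \cB_L(\partial\widetilde M)^\Gamma_{K_\varphi} \longrightarrow \cB(\partial\widetilde M)^\Gamma_{K_\varphi} \longrightarrow 0
\]
together with the algebraic identity $\ch_\varphi(\alpha) = -\tfrac{1}{2}\bar\eta_\varphi(\partial\alpha)$ from \cite[Proposition~7.2]{CWXY}, which already encodes the $-\tfrac{1}{2}$ and the passage from index to rho at the level of pairings. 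The Mayer--Vietoris sequence is then applied to the \emph{cylinder} $\partial\widetilde M\times\R$, split into two half-cylinders, to identify $\partial\big(\ind^\Gamma_{1,K_\varphi}(D)\big)$ with $\partial_{MV}\big(\rho_{1,K_\varphi}(g_\partial+dt^2)\big)$; the product formula then collapses this to $\rho_{1,K_\varphi}(g_\partial)$, and one concludes by $\bar\eta_\varphi(\rho_{1,K_\varphi}(g_\partial))=\eta_\varphi(\widetilde D_\partial)$.

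Your description of ``interior contributions collapsing to $\int_M$ of a local form, which vanishes because $\varphi$ is delocalized'' is instead the mechanism of the paper's \emph{alternative} proof via $b$-calculus (Section~\ref{sec:alter}, Theorem~\ref{b-aps}), where one has the general formula $\ch_\varphi(\ind)=\langle\varphi_e,\int_M\hat A\wedge\omega\rangle-\tfrac{1}{2}\langle\varphi,\widetilde\eta(\widetilde D_\partial)\rangle$ and the first term drops out for delocalized $\varphi$. Both routes are valid; the $K$-theoretic one is shorter once \cite{CWXY} is in hand, while the $b$-calculus route makes the local term visible and extends to non-delocalized cocycles. Your write-up would be cleaner if you committed to one of the two rather than interleaving them.
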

Let us recall the definitions of  $\ch_\varphi(\ind^\Gamma(D))$ and  $\eta_\varphi(D_\partial)$ (cf.  \cite{CWXY}). We will only give the formulas in the case where $\dim(M)$ is even. The odd dimensional case is similar. For simplicity, let us write $ p = \ind^\Gamma(D)$, a representative with finite support on $\Gamma$.   If $\varphi$ is a delocalized cyclic $2m$-cocycle, then we have 
\begin{equation}\label{eq:conneschern}
\ch_\varphi(\ind^\Gamma(D))\coloneqq \frac{(2m)!}{m!} \varphi\# \tr(p^{\otimes 2m+1})
\end{equation} 
and 
\begin{equation}\label{eq:highereta}
	\eta_\varphi(\widetilde D_\partial)
\coloneqq \frac{m!}{\pi i}
\int_0^\infty \varphi\#\tr(\dot u_s u_s^{-1}\otimes( ((u_s-1)\otimes (u_s^{-1}-1)) )^{\otimes m})ds,
\end{equation}
where $\dot{u}_s$ is the derivative of $u_s$ with respect to $s\in (0, \infty)$ and $u_s$ is defined as follows. Let us denote the universal covering space of $M$ by $\widetilde M$. The boundary of  $\widetilde M$ is a $\pi_1(M)$-covering space of $\partial M$.  Let $\widetilde D_\partial$ be the lift of the Dirac operator $D_\partial$ on $\partial M$ to this covering space of $\partial M$. We define 
$$u_s \coloneqq e^{2\pi i \cdot  f(s^{-1}\widetilde D_\partial)}  \textup{ with }  f(x)=\frac{1}{\sqrt\pi}\int_{-\infty}^{x} e^{-y^2}dy.
 $$
It was proved in \cite[Theorem 3.24]{CWXY} that the integral formula for 
the  higher eta invariant $\eta_\varphi(D_\partial)$  in line $\eqref{eq:highereta}$ converges absolutely, provided that the scalar curvature of $\partial M$ is sufficiently large, i.e., satisfying the condition in line $\eqref{eq:sufflarge2}$. In proving Theorem $\ref{theorem:aps}$, we will also show that the formula in $\eqref{eq:conneschern}$ (which is a summation of infinitely many terms) absolutely converges, provided that the condition in line $\eqref{eq:sufflarge2}$ holds. A key ingredient for the proof of Theorem \ref{theorem:aps} is Theorem \ref{thm:introproduct}---the product formula for $l^1$-higher rho invariants.

Theorem $\ref{theorem:aps}$ improves the higher Atiyah-Patodi-Singer index formula in  \cite[Theorem 3.36]{CLWY}. The index formula $\eqref{eq:aps}$ in Theorem $\ref{theorem:aps}$ can be viewed as the pairing of the $K$-theoretic higher Atiyah-Patodi-Singer index formula (as in \cite{Piazzarho,Xiepos}) with cyclic cohomology. The main difficulties for proving $\eqref{eq:aps}$  are to justify the convergence of the formulas for $\ch_\varphi(\ind^\Gamma(D))$ and $\eta_\varphi(D_\partial)$, and to establish the equality while working with K-theory of Banach algebras (instead of $C^\ast$-algebras). Furthermore, although we have only stated Theorem $\ref{theorem:aps}$ for delocalized cyclic cocycles, the same result actually holds for all cyclic cocycles with at most exponential growth except there is an extra local term on the right hand side (cf. Section $\ref{sec:alter}$).  

The paper is organized as follows. In Section \ref{sec:indexclosed}, we discuss the $l^1$-higher index theory for closed manifolds. We prove a vanishing theorem for the $l^1$-higher index of Dirac operator when the given scalar curvature is sufficient large. Furthermore, we introduce an $l^1$-version of higher rho invariants and prove a product formula of these secondary invariants in the $l^1$-setting. In Section \ref{sec:1}, we review the construction of $C^\ast$-algebraic higher index for  Dirac operators on compact spin manifolds whose boundary has positive scalar curvature. In Section \ref{sec:2}, we construct   $l^1$-higher indices  of   Dirac operators on compact spin manifolds,  provided the scalar curvature on the boundary is sufficiently large. We prove a generalized version of  Theorem $\ref{theorem:main}$ (cf. Theorem $\ref{thm:main}$ for the details). In Section Section $\ref{sec:APS}$, we apply the techniques in Section  \ref{sec:2} to prove  an $l^1$-higher Atiyah-Patodi-Singer index theorem (Theorem $\ref{theorem:aps}$).

\section{$l^1$-higher index theory for closed spin manifold}\label{sec:indexclosed}
In this section, we review the construction of the $l^1$-higher index of the Dirac operator and prove a vanishing theorem for the index. As an application, we define a higher rho invariant and prove a product formula.
\subsection{Geometric $C^*$-algebras and their Banach analogues}
In this section, we review the construction of  Roe algebras and their analogue in the $l^1$-setting. 

Let $X$ be a closed spin Riemannian manifold and $S(X)$ the spinor bundle over $X$.
Denote by $\Gamma$ the fundamental group of $X$. Let $\widetilde X$ be the universal cover of $X$ and $S(\widetilde X)$ be the lift of the spinor bundle. 

Set $H=L^2(S(\widetilde X))$, the Hilbert space of all square-integrable sections of $S(\widetilde X)$. Let $B(H)^\Gamma$ be the collection of all $\Gamma$-equivariant bounded operator on $H$. We choose a precompact fundamental domain $\mathcal F$ of the $\Gamma$-action on $\widetilde X$ and denote by $\psi$ the characteristic function of $\mathcal F$.

Let  $\|\cdot\|_\op$ be the operator norm on $B(H)$. For any $T\in  B(H)^\Gamma$ and $\gamma\in\Gamma$, we define
\[ T_{\gamma}\coloneqq \psi\circ  T \circ \gamma\psi. \] 
\begin{definition}\label{def:B^Gamma}
	We define $\cL(H)^{\Gamma}$ to be the following subspace of linear operators
	$$\cL(H)^{\Gamma}\coloneqq \{T\in B(H)^\Gamma:\sum_{\gamma\in \Gamma}\|T_\gamma\|_{\op} <\infty \}$$
	equipped with the norm
	\begin{equation}\label{eq:onenorm}  
	\|T\|_1=\sum_{\gamma\in \Gamma}\|T_\gamma\|_{\op}.
	\end{equation}
\end{definition}

It is easy to verify that $(\cB(H)^{\Gamma},\|\cdot \|_1)$ is a Banach $\ast$-algebra. Moreover, the definition of $\cB(H)^{\Gamma}$  is independent of the choice of the fundamental domain $\mathcal F$.

\begin{definition}\label{def:fp,lc,pl}\ 
		\begin{enumerate}
		\item We say that $T\in B(H)^\Gamma$ has \emph{finite propagation} if there exists $d>0$ such that $\chi_A\circ T \circ \chi_B=0$ for any two Borel sets $A$ and $B$ with $\dist(A,B)>d$. Here for example $\chi_A$ is the characteristic function of $A$.  The infimum of such $d$ is called the \emph{propagation} of $T$.
		\item  $T\in B(H)^\Gamma$ is called \emph{locally compact} if $\chi_A \circ T$ and $T\circ\chi_A$ are compact for any precompact Borel set $A$.
	\end{enumerate}
\end{definition}

\begin{definition}\label{def:Roe}
		Let $\C(\widetilde X)^\Gamma$ be the collection of operators $T\in B(H)^\Gamma$ such that $T$ has finite propagation and is locally compact. Let $C^*(\widetilde X)^\Gamma$ (resp. $\cL(\widetilde X)^\Gamma$)  be the completions of $\C(\widetilde X)^\Gamma$ with respect to the operator norm $\|\cdot\|_\op$ (resp. the $l^1$-norm $\|\cdot \|_1$ given in line $\eqref{eq:onenorm}$).
\end{definition}

The map
$T\mapsto \sum_{\gamma\in\Gamma} T_{\gamma}\gamma$
 induces isomorphisms
$$C^*(\widetilde X)^\Gamma\cong \cK\otimes C^*_r(\Gamma)\text{ and }\cL(\widetilde X)^\Gamma\cong \cK\otimes l^1(\Gamma)$$
where $\cK$ is the algebra of compact operators and the norm on $\cK\otimes l^1(\Gamma)$ is given by the $l^1$-norm
\[   \big\|\sum_{\gamma\in \Gamma} a_\gamma \gamma\big\| := \sum_{\gamma\in \Gamma}\|a_\gamma\|_{op},~ a_\gamma\in\cK,\]
which coincides with the maximal tensor product norm.

It is a well-known fact that $K$-theory of $C^\ast$-algebras is stable, that is, 
\[ K_\ast(A\otimes \cK) = K_\ast(A) \] for any $C^\ast$-algebra $A$. The following lemma shows that $K$-theory is also stable for $l^1$-algebras.\footnote{It remains an open question whether $K$-theory is stable for general Banach algebras. More precisely, for any Banach algebra $B$, is there a suitable topological tensor product $B\otimes \cK$ such that $K_*(B\otimes \cK) \cong K_*(B)$? }

%
\begin{proposition}\label{lemma:stable}
	Let $p$ be a rank one projection in $\cK$. The following map
	$$\iota\colon  l^1(\Gamma)\to \cK\otimes l^1(\Gamma),\ a\mapsto p\otimes a$$
	induces an isomorphism at the level of $K$-theory.
\end{proposition}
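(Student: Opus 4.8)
The plan is to reproduce, in the Banach setting, the classical proof that $C^\ast$-algebra $K$-theory is matrix-stable, using that $\cK\otimes l^1(\Gamma)$ with the stated $l^1$-norm is an honest Banach-algebra inductive limit of matrix algebras over $l^1(\Gamma)$.

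\emph{Step 1 (reduction to the standard corner).} First I would observe that any two rank one projections in $\cK$ are joined by a norm-continuous path of rank one projections, so the homomorphisms $a\mapsto p\otimes a$ and $a\mapsto e_{11}\otimes a$ (with $e_{11}$ the standard rank one matrix unit) are homotopic through homomorphisms $l^1(\Gamma)\to\cK\otimes l^1(\Gamma)$ and hence induce the same map on $K$-theory. So it suffices to treat $p=e_{11}$.

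\emph{Step 2 (inductive limit structure).} Next I would give $M_n(l^1(\Gamma))$ the norm $\|(a_{ij})\|=\sum_{\gamma\in\Gamma}\|(a_{ij}(\gamma))\|_{M_n}$ inherited from the corner inclusion $M_n(l^1(\Gamma))\hookrightarrow\cK\otimes l^1(\Gamma)$, $(a_{ij})\mapsto\sum_\gamma(a_{ij}(\gamma))\gamma$, where $M_n\subset\cK$ is the top-left corner. The corner embeddings $M_n(l^1(\Gamma))\hookrightarrow M_{n+1}(l^1(\Gamma))$ are isometric, so these form an increasing chain of closed subalgebras of $\cK\otimes l^1(\Gamma)$, and I claim $\bigcup_n M_n(l^1(\Gamma))$ is dense: given $b=\sum_\gamma a_\gamma\gamma$ with $\sum_\gamma\|a_\gamma\|_{\op}<\infty$, first truncate to a finite set $F\subset\Gamma$ (error $\sum_{\gamma\notin F}\|a_\gamma\|_{\op}$), and then, since $F$ is finite and each $a_\gamma$ is compact, a single compression $P_n(\cdot)P_n$ by the projection onto the first $n$ coordinates approximates all $\{a_\gamma\}_{\gamma\in F}$ simultaneously in operator norm, with $\sum_{\gamma\in F}P_na_\gamma P_n\,\gamma\in M_n(l^1(\Gamma))$. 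Hence $\cK\otimes l^1(\Gamma)=\varinjlim_n M_n(l^1(\Gamma))$ as a Banach algebra.

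\emph{Step 3 (passing to $K$-theory).} Finally I would invoke continuity of topological $K$-theory under inductive limits of Banach algebras, $K_*(\cK\otimes l^1(\Gamma))\cong\varinjlim_n K_*(M_n(l^1(\Gamma)))$ (this holds for Banach algebras since $K_0,K_1$ are computed from idempotents and invertibles in $M_\infty$ of the unitization, which live in a single stage), together with matrix-stability of topological $K$-theory: for every Banach algebra $B$ the corner inclusion $B\hookrightarrow M_n(B)$ induces an isomorphism $K_*(B)\xrightarrow{\ \cong\ }K_*(M_n(B))$. Thus every structure map in the system $K_*(M_n(l^1(\Gamma)))\to K_*(M_{n+1}(l^1(\Gamma)))$ is an isomorphism, its colimit is $K_*(M_1(l^1(\Gamma)))=K_*(l^1(\Gamma))$, and the colimit map is exactly the one induced by $l^1(\Gamma)=M_1(l^1(\Gamma))\hookrightarrow\cK\otimes l^1(\Gamma)$, i.e.\ $\iota$. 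Hence $\iota_*$ is an isomorphism. The step that needs the most care is Step 2 — confirming that $\cK\otimes l^1(\Gamma)$ with this particular $l^1$-norm genuinely coincides with the Banach-algebra inductive limit of the $M_n(l^1(\Gamma))$; this is precisely the feature of the $l^1$-tensor product that is not available for a general Banach algebra, as noted in the footnote. Everything else is an appeal to standard functional-calculus arguments in Banach $K$-theory (continuity and matrix stability).
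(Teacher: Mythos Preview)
Your proof is correct and follows essentially the same route as the paper: realize $\cK\otimes l^1(\Gamma)$ as the Banach inductive limit of $M_n(l^1(\Gamma))$ via the same truncate-to-finite-$F$-then-compress density argument, and then use matrix-stability and continuity of $K$-theory under direct limits. Your Step~1 reducing an arbitrary rank one projection to $e_{11}$ is a small addition the paper omits, but otherwise the arguments coincide.
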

\begin{proof}
	Given an element $\sum_{\gamma\in \Gamma}a_\gamma\gamma\in \cK\otimes l^1(\Gamma)$, for $
	\varepsilon>0$, there exist a finite subset $F\subset \Gamma$ and finite dimensional matrices $a'_\gamma \in M_n(\mathbb C)$ for each $\gamma\in F$ such that $$\big\|\sum_{\gamma\in \Gamma}a_\gamma\gamma-\sum_{\gamma\in F}a_\gamma'\gamma\big\|_1
	=\sum_{\gamma\in F}\|a_\gamma-a_\gamma'\|_{op}+\sum_{\gamma\notin F}\|a_\gamma\|_{op}
	<\varepsilon.$$ Thus $\cK\otimes l^1(\Gamma)$ is the direct limit of the following direct system:
	$$l^1(\Gamma)\xrightarrow{\iota_1} M_1(\mathbb C)\otimes l^1(\Gamma)\xrightarrow{\iota_2}  M_2(\mathbb C)\otimes l^1(\Gamma) \xrightarrow{\iota_3} \cdots$$
	where the map 	
	\[ \iota_n \colon M_n(\mathbb C)\otimes l^1(\Gamma)\to M_{n+1}(\mathbb C)\otimes l^1(\Gamma), \]
	is given by 
	\[ \sum_{\gamma\in \Gamma}b_\gamma\gamma\mapsto \sum_{\gamma\in \Gamma}\begin{pmatrix}b_\gamma&\\&0\end{pmatrix}\gamma,
	\]
	and the norm on $M_n(\mathbb C)\otimes l^1(\Gamma)$ is given by
	$$\|\sum_{\gamma\in \Gamma}b_\gamma\gamma\|_1=\sum_{\gamma\in \Gamma}\|b_\gamma\|_\op.$$
	Clearly, the map $\iota_n$ induces an isomorphism at the level of $K$-theory, hence follows the lemma. 	
	
\end{proof}

Now we fix a symmetric generating set of $\Gamma$ and denote by $\ell$ the corresponding length function. We will need the following weighted $l^1$-completions of $\mathbb C\Gamma$. 
\begin{definition} \label{def:konenorm}		Suppose $K$ is a non-negative real number.
	\begin{enumerate}
		\item Let $l^1_K(\Gamma)$ be the completion of $\mathbb C\Gamma$ with respect to the following norm
		\begin{equation}
			\|\sum_{\gamma\in\Gamma}a_\gamma\gamma\|_{1,K}:=\sum_{\gamma\in\Gamma} e^{K\ell(\gamma)}\|a_\gamma\|,
		\end{equation}
		for all finite sums  $\sum_{\gamma\in\Gamma}a_\gamma\gamma \in\mathbb C\Gamma.$
		\item Similarly, define $\cL(\widetilde X)_K^\Gamma$ to be the completion of $\mathbb C(\widetilde X)^\Gamma$ with respect to the following norm
		$$\|T\|_{1,K}:=\sum_{\gamma\in \Gamma} e^{K\ell(\gamma)}\|T_{\gamma}\|_\op,
		$$
		for all $T = \sum_{\gamma\in\Gamma}T_\gamma\gamma \in \mathbb C(\widetilde X)^\Gamma.$
	\end{enumerate}
\end{definition}

It is clear that $l^1_K(\Gamma)$ and $\cL(\widetilde X)_K^\Gamma$ are Banach algebras and $\cL(\widetilde X)_K^\Gamma$ is isomorphic to the maximal tensor product $l^1_K(\Gamma)\otimes\cK$.
\subsection{$l^1$-higher index of Dirac operators}\label{sec:higherindexclosed}
Now we recall the definition of the higher index in the $l^1$-setting. 

\begin{definition}\label{def:normalizingfunc}
	A continuous function $F\colon \mathbb R\to [-1, 1]$ is called a normalizing function if 
	\begin{enumerate}
		\item $F$ is an odd function, that is, $F(-t) = -F(t)$,
		\item $\lim_{x\to\pm\infty} F(x)=\pm 1,$
		\item the Fourier transform of $F'$ is supported on $[-N_F,N_F]$ for some $N_F>0$.
	\end{enumerate}
\end{definition}
Note that, in this case, the Fourier transform of $F$ is also supported on $[-N_F,N_F]$ as a tempered distribution. 
Throughout the paper, we use the following choice of formula for the Fourier transform
\begin{equation}\label{eq:Fourier}
\hat{f}(\xi)=\int f(x)e^{-ix\xi}dx,
\end{equation}
and its inverse Fourier transform is given by 
\begin{equation}\label{eq:fourier}
f(x)=\frac{1}{2\pi}\int \hat f(\xi)e^{ix\xi}dx.
\end{equation}

Let $\widetilde D$ be the associated Dirac operator on the universal cover $\widetilde X$. By the Fourier inverse transform formula \eqref{eq:fourier}, we see that the operator $F(\widetilde D)$ has propagation no more than $N_F$, since the wave operator $e^{i\xi\widetilde D}$ has propagation $\leqslant |\xi|$.
Therefore, we have 
$$\|F(\widetilde D)\|_{1,K}<\infty$$
for any $K\geqslant 0$.

When $X$ is even dimensional, the spinor bundle $S$ on $X$ admits a natural $\Z_2$-grading and the Dirac operator $D$ is an odd operator.
As $F$ is an odd function, $F(\widetilde D)$ is an odd operator with respect to the  $\mathbb Z_2$-grading on the spinor bundle $\widetilde S$ of $\widetilde X$, that is,
\begin{equation}
F(\widetilde D)=\begin{pmatrix}
0&F(\widetilde D)^-\\F(\widetilde D)^+&0
\end{pmatrix}.
\end{equation}
Let us write $U = F(\widetilde D)^+$ and $V = F(\widetilde D)^-$, and define  an invertible element
\begin{equation}
W_{F(\widetilde D)}\coloneqq \begin{pmatrix}1& U\\ 0&1\end{pmatrix}
\begin{pmatrix}1& 0\\ -V&1\end{pmatrix}
\begin{pmatrix}1& U\\ 0&1\end{pmatrix}
\begin{pmatrix}0& -1\\ 1&0\end{pmatrix}
\end{equation}
This allows us to define the following idempotent 
\begin{equation}\label{eq:idem}
\begin{split}
p_{F(\widetilde D)}&=W_{F(\widetilde D)}\begin{pmatrix}
1&0\\0&0
\end{pmatrix} W_{F(\widetilde D)}^{-1}\\
&=\begin{pmatrix}
1-(1-UV)^2&(2-UV)U(1-VU)\\V(1-UV)&(1-VU)^2
\end{pmatrix}.
\end{split}
\end{equation}
It is easy to see that  $p_{F(\widetilde D)}-\begin{psmallmatrix}1&0\\0&0\end{psmallmatrix}$ is locally compact hence lies in $\cB(\widetilde X)^\Gamma_K$.

\begin{definition}\label{def:1Kindex-even}
	If $\dim X$ is even, then the $l^1$-higher index of $\widetilde D$ is defined to be
	$$\ind^\Gamma_{1,K}(D)\coloneqq[p_{F(\widetilde D)}]-[\begin{psmallmatrix}1&0\\0&0\end{psmallmatrix}]\in K_0(\cB(\widetilde X)^\Gamma_K).$$
\end{definition}

When $X$ is odd-dimensional, we see that
$e^{2\pi i\frac{F(\widetilde D)+1}{2}}$
is locally compact and lies in the unitalization of $\cB(\widetilde X)^\Gamma_K$, as $\|\cdot\|_{1,K}$ is an algebraic norm.
\begin{definition}\label{def:1Kindex-odd}
	If $\dim X$ is odd, then the $l^1$-higher index of $\widetilde D$ is defined to be
$$\ind^\Gamma_{1,K}( D)\coloneqq [e^{2\pi i\frac{F(\widetilde D)+1}{2}}]\in K_1(\cB(\widetilde X)^\Gamma_K).$$
\end{definition}

Obviously, the definition of the $l^1$-higher index is independent of the choice of the normalizing function. In fact, any function $F$ in Definition \ref{def:normalizingfunc} with the condition (3) replaced by that 
$$\|F(\widetilde D)\|_{1,K}<\infty$$
also defines the same index class in $K_*(\cB(\widetilde X)^\Gamma_K)$.
\subsection{$l^1$-norm inequalities}
In this subsection, we prove some $l^1$-norm estimates of various operators. 

We first fix some notations. Choose a finite symmetric generating set $S$ of $\Gamma$. Let $\ell$ be the length function on $\Gamma$ induced by $S$. We define
\begin{equation}\label{eq:KGamma}
K_\Gamma=\inf\{K:\exists C>0 \text{ s.t. }\#\{\gamma\in\Gamma:\ell(\gamma)\leqslant n\}\leqslant Ce^{Kn} \}
\end{equation}
and
\begin{equation}\label{eq:tau0}
\tau=\liminf_{\ell(\gamma)\to\infty}\sup_{x\in\mathcal F}\frac{\dist(x,\gamma x)}{\ell(\gamma)},
\end{equation}
where $\mathcal F$ is a fundamental domain of the $\Gamma$-action on
 $\widetilde X$.



For convenience, instead of the definitions of $K_\Gamma$ and $\tau$ in line \eqref{eq:KGamma} and \eqref{eq:tau0}, we assume that
\begin{equation}\label{eq:KGammaconvenience}
\#\{\gamma\in\Gamma:\ell(\gamma)\leqslant n \}\leqslant Ce^{K_\Gamma n},~\forall n\geqslant 0
\end{equation}
and
\begin{equation}\label{eq:tau}
\dist(x,\gamma x)>\tau\ell(\gamma)-C_0,~\forall\gamma\in \Gamma,~\forall x\in \mathcal F
\end{equation}
with some $C>0$ and $C_0>0$.
We recall that $\psi$ is the characteristic function of $\mathcal F$. We denote the diameter of $\mathcal F$ by $\mathrm{diam}\mathcal F$. 
\begin{lemma}\label{lemma:Fourier}	
	Given $f\in C_0(\R)$, suppose its Fourier transform $\hat f\in L^1(\R)$. Then for any $\mu>1$, we have
	$$\|f(\widetilde D)_\gamma\|_\op\leqslant \frac{1}{2\pi}\int_{|\xi|>\frac{\tau\ell(\gamma)}{\mu}} |\hat f(\xi)|d\xi $$
	for all $\gamma\in\Gamma$ satisfying
	\begin{equation}\label{eq:lengthbd}
	\ell(\gamma)>\mathfrak N_\mu\coloneqq \frac{\sqrt{\mu}(C_0+\mathrm{diam}\mathcal F)}{\tau(\sqrt\mu-1)},
	\end{equation}
	where $f(\widetilde D)_\gamma = \psi \circ f(\widetilde D) \circ \gamma \psi$.
\end{lemma}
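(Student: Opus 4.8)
The plan is to express $f(\widetilde D)$ through the Fourier inversion formula for the functional calculus, to exploit the unit-speed finite propagation of the wave operator $e^{i\xi\widetilde D}$ in order to restrict the relevant frequency range, and then to feed in the geometric lower bound on $\dist(\mathcal F,\gamma\mathcal F)$ coming from \eqref{eq:tau}.

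First I would write, as a norm-convergent Bochner integral,
\[
f(\widetilde D)=\frac{1}{2\pi}\int_{\mathbb R}\hat f(\xi)\,e^{i\xi\widetilde D}\,d\xi ,
\]
which is legitimate because $\hat f\in L^1(\mathbb R)$ and $\|e^{i\xi\widetilde D}\|_{\op}=1$, and which agrees with the continuous functional calculus of $f$ applied to $\widetilde D$ by Stone's theorem together with Fourier inversion (valid here since $\hat f\in L^1$ and $f$ is continuous). Compressing by $\psi$ on the left and $\gamma\psi$ on the right gives
\[
f(\widetilde D)_\gamma=\frac{1}{2\pi}\int_{\mathbb R}\hat f(\xi)\,\psi\, e^{i\xi\widetilde D}\,\gamma\psi\,d\xi .
\]

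Next, since $\gamma\psi=\chi_{\gamma\mathcal F}\circ\gamma\psi$ has range contained in the sections supported on $\gamma\mathcal F$, and $\psi=\chi_{\mathcal F}$ restricts to $\mathcal F$, the propagation bound $|\xi|$ for $e^{i\xi\widetilde D}$ forces $\psi\, e^{i\xi\widetilde D}\,\gamma\psi=(\chi_{\mathcal F}\, e^{i\xi\widetilde D}\,\chi_{\gamma\mathcal F})\gamma\psi=0$ whenever $|\xi|<\dist(\mathcal F,\gamma\mathcal F)$. Using $\|\psi\, e^{i\xi\widetilde D}\,\gamma\psi\|_{\op}\le 1$, this yields
\[
\|f(\widetilde D)_\gamma\|_{\op}\le\frac{1}{2\pi}\int_{|\xi|\ge\dist(\mathcal F,\gamma\mathcal F)}|\hat f(\xi)|\,d\xi .
\]
Finally I would establish the distance estimate: for $x,x'\in\mathcal F$, the triangle inequality together with \eqref{eq:tau} gives $\dist(x,\gamma x')\ge\dist(x',\gamma x')-\dist(x,x')>\tau\ell(\gamma)-C_0-\mathrm{diam}\mathcal F$, hence $\dist(\mathcal F,\gamma\mathcal F)\ge\tau\ell(\gamma)-C_0-\mathrm{diam}\mathcal F$. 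A short computation shows that the hypothesis $\ell(\gamma)>\mathfrak N_\mu$ implies $\tau\ell(\gamma)-C_0-\mathrm{diam}\mathcal F>\tau\ell(\gamma)/\mu$ — this is precisely where $\tfrac{\sqrt\mu}{\sqrt\mu-1}>\tfrac{\mu}{\mu-1}$ for $\mu>1$ is used — so $\{|\xi|\ge\dist(\mathcal F,\gamma\mathcal F)\}\subseteq\{|\xi|>\tau\ell(\gamma)/\mu\}$ and the asserted bound follows.

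I do not expect a genuine obstacle here. The only points requiring some care are the justification that the Bochner integral above represents $f(\widetilde D)$ for a general $f\in C_0(\mathbb R)$ with $\hat f\in L^1$ (rather than for a Schwartz function), which is a routine functional-calculus argument via spectral measures, and the bookkeeping of strict versus non-strict inequalities so that the stated threshold $\mathfrak N_\mu$, which is slightly larger than the sharpest possible value $\mu(C_0+\mathrm{diam}\mathcal F)/(\tau(\mu-1))$, indeed suffices.
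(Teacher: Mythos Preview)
Your proposal is correct and follows essentially the same approach as the paper: Fourier inversion, unit-speed propagation of $e^{i\xi\widetilde D}$, and the distance lower bound from \eqref{eq:tau}. The only cosmetic difference is that the paper interposes a smooth cutoff $\chi$ supported on $\{|\xi|>\tau\ell(\gamma)/\mu\}$ and equal to $1$ on $\{|\xi|>\tau\ell(\gamma)/\sqrt\mu\}$, splits $f(\widetilde D)=g(\widetilde D)+(f-g)(\widetilde D)$, and uses that $(f-g)(\widetilde D)$ has propagation $\le\tau\ell(\gamma)/\sqrt\mu$ to kill its $\gamma$-component; your version instead cuts sharply at $\dist(\mathcal F,\gamma\mathcal F)$ inside the Bochner integral. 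Your route is slightly more direct and makes transparent why the stated threshold $\mathfrak N_\mu=\sqrt\mu(C_0+\mathrm{diam}\mathcal F)/(\tau(\sqrt\mu-1))$ is a bit larger than the sharp value $\mu(C_0+\mathrm{diam}\mathcal F)/(\tau(\mu-1))$: in the paper the $\sqrt\mu$ arises because the intermediate propagation cutoff is placed at $\tau\ell(\gamma)/\sqrt\mu$, whereas you absorb the slack directly.
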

\begin{proof}
	Fix $\mu>1$. For each $\gamma\in \Gamma$ such that $\ell(\gamma) >\mathfrak N_\mu$, let $\chi$ be a smooth function on $\mathbb R$ that vanishes on the interval $[-\frac{\tau\ell(\gamma)}{\mu},\frac{\tau\ell(\gamma)}{\mu}]$ and equals $1$ on $(-\infty, -\frac{\tau\ell(\gamma)}{\sqrt\mu})$ and $(\frac{\tau\ell(\gamma)}{\sqrt\mu},+\infty)$.
	
	By the Fourier inverse transform formula, we have
	$$f(\widetilde D)=\frac{1}{2\pi}\int \hat f(\xi)e^{i\xi \widetilde D}d\xi.$$
	Let us define
	$$g(\widetilde D)=\frac{1}{2\pi}\int \chi(\xi)\hat f(\xi)e^{i\xi \widetilde D}d\xi.$$
	Since the wave operator $e^{i\xi \widetilde D_{\cyc}}$ has propagation $\leqslant |\xi|$, it follows that $f(\widetilde D_{\cyc})-g(\widetilde D_{\cyc})$ has propagation  $\leqslant \frac{\tau\ell(\gamma)}{\sqrt\mu}$. 	
	Therefore, from line \eqref{eq:tau}, we have $f(\widetilde D)_\gamma-g(\widetilde D)_\gamma=0$ in this case. We conclude that 
	\begin{align*}
	\|f(\widetilde D)_\gamma\|_\op=\|g(\widetilde D)_\gamma \|_\op\leqslant \|g(\widetilde D)\|_\op \leqslant \frac{1}{2\pi}\int_{|\xi|>\frac{\tau\ell(\gamma)}{\mu}} |\hat f(\xi)|d\xi.
	\end{align*}
\end{proof}


\begin{definition}
	Let $f$ be a Schwartz function on $\mathbb R$. We say $f$ has Gaussian decay with rate $C$ if there exists $A>0$ such that
	$$|f(x)|\leqslant A e^{-Cx^2}$$
	for all $x\in \mathbb R$. 
\end{definition}

For example, the Gaussian function $e^{-x^2}$ as well as its Fourier transform has Gaussian decay.
\begin{lemma}\label{lemma:largespectralgap}
	Given a Schwartz function $f$ on $\mathbb R$, suppose both $f$ and its Fourier transform $\hat f$  have Gaussian decay with rates $C$ and $\widehat C$ respectively. Let $\sigma$ be the infimum of the spectrum of $|\widetilde D|$. If there exists a constant $K\geqslant 0$ such that 
	$$\sigma>\frac{K_\Gamma + K}{\tau\sqrt{C\widehat C}},$$
	then there exist $\varepsilon>0$ and $\lambda >0$ such that
	$$\|f(t\widetilde D)\|_{1, K}\leqslant \lambda \cdot e^{-\varepsilon t^2}$$
	where $\|\cdot\|_{1, K}$ is the weighted $l^1$-norm given in Definition $\ref{def:konenorm}$.
\end{lemma}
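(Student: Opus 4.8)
The plan is to estimate $\|f(t\widetilde D)_\gamma\|_\op$ for each $\gamma\in\Gamma$ separately, by splitting into two regimes of $\ell(\gamma)$, and then to sum the resulting bounds against the weight $e^{K\ell(\gamma)}$, using the growth bound \eqref{eq:KGammaconvenience} on the number of group elements of a given length. Throughout, fix $\mu>1$ to be chosen at the very end (close to $1$), so that the threshold $\mathfrak N_\mu$ from Lemma \ref{lemma:Fourier} is fixed. For $\gamma$ with $\ell(\gamma)\leqslant \mathfrak N_\mu$, there are only finitely many such $\gamma$ (by \eqref{eq:KGammaconvenience}), and for each we use the crude bound $\|f(t\widetilde D)_\gamma\|_\op\leqslant \|f(t\widetilde D)\|_\op$. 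Since $\sigma$ is the infimum of the spectrum of $|\widetilde D|$, the spectral theorem gives $\|f(t\widetilde D)\|_\op\leqslant \sup_{|x|\geqslant\sigma}|f(tx)|\leqslant A e^{-C\sigma^2 t^2}$ using the Gaussian decay of $f$; so the contribution of this finite set of $\gamma$ to $\|f(t\widetilde D)\|_{1,K}$ is bounded by a constant times $e^{-C\sigma^2 t^2}$, which has the desired form.

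For $\gamma$ with $\ell(\gamma)>\mathfrak N_\mu$, apply Lemma \ref{lemma:Fourier} to the function $f_t(x)=f(tx)$, whose Fourier transform is $\widehat{f_t}(\xi)=t^{-1}\hat f(\xi/t)$; this gives
\begin{equation}
\|f(t\widetilde D)_\gamma\|_\op\leqslant \frac{1}{2\pi t}\int_{|\xi|>\tau\ell(\gamma)/\mu}\bigl|\hat f(\xi/t)\bigr|\,d\xi = \frac{1}{2\pi}\int_{|\eta|>\tau\ell(\gamma)/(\mu t)}\bigl|\hat f(\eta)\bigr|\,d\eta.
\end{equation}
Now invoke the Gaussian decay $|\hat f(\eta)|\leqslant \widehat A\, e^{-\widehat C\eta^2}$: the tail integral $\int_{|\eta|>R}e^{-\widehat C\eta^2}\,d\eta$ is bounded by a constant times $e^{-\widehat C R^2}$ (for, say, $R\geqslant 1$; one can also absorb small $R$ into the constant). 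Taking $R=\tau\ell(\gamma)/(\mu t)$ yields
\begin{equation}
\|f(t\widetilde D)_\gamma\|_\op\leqslant B\,\exp\!\Bigl(-\frac{\widehat C\,\tau^2\,\ell(\gamma)^2}{\mu^2 t^2}\Bigr)
\end{equation}
for some constant $B>0$. Summing over all $\gamma$ with $\ell(\gamma)>\mathfrak N_\mu$ against $e^{K\ell(\gamma)}$, and grouping $\gamma$ by the value $n=\ell(\gamma)$ so that there are at most $Ce^{K_\Gamma n}$ of them, gives
\begin{equation}
\sum_{\ell(\gamma)>\mathfrak N_\mu} e^{K\ell(\gamma)}\|f(t\widetilde D)_\gamma\|_\op \leqslant BC\sum_{n\geqslant 0} \exp\!\Bigl((K+K_\Gamma)n-\frac{\widehat C\,\tau^2}{\mu^2 t^2}n^2\Bigr).
\end{equation}

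The main point is then an elementary analysis of this last sum. Completing the square in $n$, the exponent equals $-\frac{\widehat C\tau^2}{\mu^2 t^2}\bigl(n-\frac{(K+K_\Gamma)\mu^2 t^2}{2\widehat C\tau^2}\bigr)^2+\frac{(K+K_\Gamma)^2\mu^2 t^2}{4\widehat C\tau^2}$, so the sum is comparable to $t\cdot \exp\bigl(\frac{(K+K_\Gamma)^2\mu^2 t^2}{4\widehat C\tau^2}\bigr)$ (the factor $t$ coming from comparing the sum to a Gaussian integral of width proportional to $t$). Hence the whole contribution is at most $\lambda' t\cdot\exp\bigl(\frac{(K+K_\Gamma)^2\mu^2}{4\widehat C\tau^2}t^2\bigr)$, which combined with the $e^{-C\sigma^2 t^2}$ term from the finite part gives an overall bound of the form $\lambda\cdot e^{-\varepsilon t^2}$ as soon as
\begin{equation}
C\sigma^2>\frac{(K+K_\Gamma)^2\mu^2}{4\widehat C\tau^2}, \quad\text{i.e.}\quad \sigma>\frac{(K+K_\Gamma)\mu}{2\tau\sqrt{C\widehat C}};
\end{equation}
wait --- one should be careful with the polynomial factor $t$, but since $e^{-\varepsilon' t^2}$ dominates any polynomial we may shrink $\varepsilon$ slightly and absorb it. The hypothesis is $\sigma>\frac{K+K_\Gamma}{\tau\sqrt{C\widehat C}}$, so choosing $\mu>1$ sufficiently close to $1$ makes the displayed inequality hold (indeed, with room to spare, so that after absorbing the $e^{-C\sigma^2t^2}$ term as well there is still a positive exponential decay rate $\varepsilon$). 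The main obstacle, and the only genuinely delicate point, is getting the constants to line up correctly through the three rescalings (Fourier rescaling $f\mapsto f_t$, the $\mu$ in Lemma \ref{lemma:Fourier}, and completing the square), and in particular verifying that the quadratic-in-$n$ decay beats the exponential-in-$n$ growth with exactly the stated comparison constant after letting $\mu\to 1$; the polynomial factor $t$ and the small-$R$ behavior of the Gaussian tail are harmless and can be swept into constants.
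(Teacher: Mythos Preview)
Your argument has a genuine gap in handling the long-range part $\ell(\gamma)>\mathfrak N_\mu$. After completing the square you correctly obtain that
\[
\sum_{n\geqslant 0}\exp\Bigl((K+K_\Gamma)n-\frac{\widehat C\,\tau^2}{\mu^2 t^2}\,n^2\Bigr)
\]
is comparable to $t\cdot\exp\bigl(\frac{(K+K_\Gamma)^2\mu^2}{4\widehat C\tau^2}\,t^2\bigr)$, but the exponent here is \emph{positive}: this quantity \emph{grows} like $e^{+ct^2}$ as $t\to\infty$. Since $\|f(t\widetilde D)\|_{1,K}$ is the \emph{sum} of the short-range and long-range contributions, not their product, adding the decaying short-range term $e^{-C\sigma^2 t^2}$ to a growing long-range term cannot yield a decaying bound. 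The condition $C\sigma^2>\frac{(K+K_\Gamma)^2\mu^2}{4\widehat C\tau^2}$ you derive is therefore irrelevant: the spectral gap $\sigma$ was used only on the fixed finite set of $\gamma$ with $\ell(\gamma)\leqslant\mathfrak N_\mu$ and never enters your long-range estimate. (The fact that your resulting constraint on $\sigma$ is better than the lemma's hypothesis by a factor of $2$ is already a warning sign.)

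The fix, which is what the paper does, is to make the splitting threshold $t$-dependent: split at $\ell(\gamma)\approx st^2$ for a suitable constant $s>0$. For $\ell(\gamma)\leqslant st^2$ use the spectral-gap bound $\|f(t\widetilde D)_\gamma\|_\op\leqslant A_0e^{-C\sigma^2 t^2}$, valid for every $\gamma$; counting these $\gamma$ contributes at most a factor $e^{(K+K_\Gamma)st^2}$, so this part is $\lesssim e^{-(C\sigma^2-(K+K_\Gamma)s)t^2}$, which decays provided $s<\frac{C\sigma^2}{K+K_\Gamma}$. For $\ell(\gamma)=n>st^2$, the Fourier bound gives $\frac{\widehat C\tau^2 n^2}{\mu^2 t^2}>\frac{\widehat C\tau^2 s}{\mu^2}\,n$, so the tail is dominated by a geometric series with ratio $\exp\bigl((K+K_\Gamma)-\widehat C\tau^2 s/\mu^2\bigr)$, convergent and with first term $\lesssim e^{-\varepsilon t^2}$ provided $s>\frac{(K+K_\Gamma)\mu^2}{\widehat C\tau^2}$. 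An $s$ satisfying both constraints exists precisely when $\sigma>\frac{(K+K_\Gamma)\mu}{\tau\sqrt{C\widehat C}}$, which for $\mu$ close to $1$ is the hypothesis of the lemma.
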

\begin{proof}
	By assumption, there exist positive constants $A_0$ and $A_1$ such that
	$$|f(x)|\leqslant A_0 e^{-C x^2}\text{ and }|\hat f(\xi)|\leqslant A_1 e^{-\widehat C\xi^2}.$$
	Applying Lemma \ref{lemma:Fourier}, it follows that for any $\mu>1$,  there exists $A_2 >0$ such that
	$$\|f(t\widetilde D)_\gamma\|_\op\leqslant A_2 e^{-\widehat C\frac{\tau^2\ell(\gamma)^2}{t^2\mu^2}}$$
	for all $\gamma\in \Gamma$ with $\ell(\gamma)\geqslant \mathfrak N_\mu$, where $\mathfrak N_\mu$ is the number given in line $\eqref{eq:lengthbd}$. On the other hand, since $\sigma$ is the spectral gap of $\widetilde D$ at zero, we have 
	$$\|f(t\widetilde D)_\gamma\|_\op\leqslant \|f(t\widetilde D)\|_\op \leqslant A_0e^{-C t^2\sigma^2}.$$
	By assumption that $\sigma>\frac{K_\Gamma + K}{\tau\sqrt{C\widehat C}}$, we have 
	$$\frac{C\sigma^2}{K_\Gamma+ K}>\frac{(K_\Gamma + K)\mu^2 }{\widehat C \tau^2}$$
	for some $\mu>1$. Fix such a $\mu$ for the rest of the proof. There exists a positive number  $s$ such that 
	$$\frac{C\sigma^2}{(K_\Gamma+ K)s}>1 > \frac{(K_\Gamma + K)s }{\widehat C \tau^2 \mu^{-2} s^2}.$$It follows that there exists $\varepsilon > 0$ such that 
	\[ 
	C\sigma^2-(K_\Gamma+ K)s>\varepsilon>0 \textup{ and } 
	\frac{\widehat C\tau^2s^2}{\mu^2}-(K_\Gamma+ K)s>\varepsilon>0.
	\]
	We conclude  that 
	\begin{align*}
	\|f(tD)\|_{1, K}=&\sum_{\ell(g)\leqslant st^2 +\mathfrak N_\mu} e^{K\ell(\gamma)} \|f(tD_{\cyc})_\gamma\|_\op+\sum_{\ell(g)>st^2 +\mathfrak N_\mu} e^{K\ell(\gamma)}\|f(tD_{\cyc})_\gamma\|_\op\\
	\leqslant&A_0 e^{-Ct^2\sigma^2}e^{(K_\Gamma +K) (st^2 + \mathfrak N_\mu)}+
	A_2\sum_{n>st^2 + \mathfrak N_\mu}e^{-\widehat C\frac{\tau^2n^2}{t^2\mu^2}}e^{(K_\Gamma +K) \cdot n}\\
	\leqslant& A_0 e^{(K_\Gamma +K) \mathfrak N_\mu} e^{-(C\sigma^2-K_\Gamma s - Ks)t^2}+A_2\sum_{n>st^2+ \mathfrak N_\mu}e^{-(\widehat C\frac{\tau^2 s}{\mu^2}-K_\Gamma -K)\cdot n } \\
	\leqslant &A_3 e^{-\varepsilon t^2}+A_4 e^{-\varepsilon  t^2}
	\end{align*}
	for some $A_3 >0$ and $A_4>0$. The lemma follows by setting $\lambda = A_3 + A_4$.  
\end{proof}
\subsection{A vanishing theorem for $l^1$-higher index}
In this subsection, we show that the $l^1$-higher index of an elliptic differential operator $\widetilde D$ vanishes if the spectral gap of $\widetilde D$ at zero is sufficiently large.

\begin{theorem}\label{thm:vanishing}
	For any $K\geqslant 0$, if
	\begin{equation}\label{eq:|D|>}
	|\widetilde D|>\frac{2(K_\Gamma+K)}{\tau},
	\end{equation}
	then $\ind^\Gamma_{1,K}(D)=0$ in $K_*(\cB(\widetilde X)^\Gamma_K)$.
\end{theorem}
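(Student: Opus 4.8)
The plan is to exploit the large spectral gap of $\widetilde D$ to produce a path of invertibles (in the odd case) or idempotents (in the even case) inside the unitization of $\cB(\widetilde X)^\Gamma_K$ connecting the index class to a trivial class. The key point is that, under the hypothesis $|\widetilde D| > 2(K_\Gamma+K)/\tau$, one has freedom in the choice of normalizing function: as remarked after Definition~\ref{def:1Kindex-odd}, the index is independent of the choice of $F$ so long as $\|F(\widetilde D)\|_{1,K} < \infty$, and more flexibly still, any odd function $F$ with $F(\widetilde D)$ of finite $\|\cdot\|_{1,K}$-norm and $\lim_{x\to\pm\infty}F(x) = \pm 1$ yields the same class. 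So the first step is to choose $F$ adapted to the spectral gap: since $\sigma := \inf\,\mathrm{spec}|\widetilde D| > 2(K_\Gamma+K)/\tau$, I can take $F$ to be $+1$ on $[\sigma', \infty)$ and $-1$ on $(-\infty, -\sigma']$ for some $\sigma' < \sigma$, and odd in between — that is, $F$ equals $\sgn$ on the spectrum of $\widetilde D$. The issue is then whether $\sgn(\widetilde D)$ itself has finite $\|\cdot\|_{1,K}$-norm; this is exactly where Lemma~\ref{lemma:largespectralgap} enters.

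The second step is the norm estimate. Write $\sgn(\widetilde D) = \widetilde D \,|\widetilde D|^{-1}$, or more usefully use a Gaussian-type representation of the sign function away from zero: for instance $\sgn(x) = \frac{2}{\sqrt\pi}\int_0^\infty x\, e^{-t^2 x^2}\, dt$ (valid for $x \neq 0$, hence on $\mathrm{spec}(\widetilde D)$ since $0$ is a gap), or a similar integral writing $F(\widetilde D) - 1$ on the positive part in terms of $\int_0^\infty f(t\widetilde D)\,dt$ for a Schwartz $f$ with $f$ and $\hat f$ of Gaussian decay. Then I would apply Lemma~\ref{lemma:largespectralgap} to each $f(t\widetilde D)$: the hypothesis there requires $\sigma > (K_\Gamma+K)/(\tau\sqrt{C\widehat C})$; for the standard Gaussian $e^{-x^2}$ one gets $C = \widehat C = $ essentially $1$ up to the normalization in \eqref{eq:Fourier}, and by rescaling the Gaussian one can push $\sqrt{C\widehat C}$ up towards $\tfrac14$ (the Fourier transform of $e^{-ax^2}$ is a Gaussian with reciprocal width, so $C\widehat C$ is scale-invariant and equals a fixed constant — with the convention \eqref{eq:Fourier} this constant works out to make the threshold $2(K_\Gamma+K)/\tau$). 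This yields $\|f(t\widetilde D)\|_{1,K} \leqslant \lambda e^{-\varepsilon t^2}$, which is integrable in $t$, so $\int_0^\infty f(t\widetilde D)\,dt$ converges absolutely in $\cB(\widetilde X)^\Gamma_K$ and $\sgn(\widetilde D)$ (or the relevant spectral projection $\tfrac12(1+\sgn(\widetilde D))$) lies in the unitization of $\cB(\widetilde X)^\Gamma_K$, with $\sgn(\widetilde D)^2 = 1$ and $F(\widetilde D)^\pm$ genuine inverses of each other.

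The third step is the homotopy/algebra. Once $F = \sgn$ on the spectrum, in the even case $U = F(\widetilde D)^+$ and $V = F(\widetilde D)^-$ satisfy $UV = VU = 1$, so the idempotent $p_{F(\widetilde D)}$ in \eqref{eq:idem} collapses: plugging $UV = VU = 1$ into the explicit matrix gives $p_{F(\widetilde D)} = \begin{psmallmatrix}1&0\\0&0\end{psmallmatrix}$ on the nose, whence $\ind^\Gamma_{1,K}(D) = [p_{F(\widetilde D)}] - [\begin{psmallmatrix}1&0\\0&0\end{psmallmatrix}] = 0$. In the odd case, $e^{2\pi i (F(\widetilde D)+1)/2}$ with $F(\widetilde D) = \sgn(\widetilde D)$ a genuine self-adjoint unitary in the unitization: $\tfrac12(F(\widetilde D)+1)$ is an idempotent $q$, so $e^{2\pi i q}$ is homotopic to $1$ through the path $e^{2\pi i s q}$, $s \in [0,1]$, all of whose terms lie in the unitization of $\cB(\widetilde X)^\Gamma_K$ (the spectral-theoretic functional calculus is available since $q$ is a genuine idempotent there, not merely approximately so); hence $[e^{2\pi i (F(\widetilde D)+1)/2}] = 0$ in $K_1$.

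The main obstacle I expect is step two — tracking the precise constant $\sqrt{C\widehat C}$ coming from the Gaussian representation of $\sgn$ under the Fourier convention \eqref{eq:Fourier}, and verifying that the hypothesis $|\widetilde D| > 2(K_\Gamma+K)/\tau$ of the theorem matches exactly the hypothesis $\sigma > (K_\Gamma+K)/(\tau\sqrt{C\widehat C})$ needed to apply Lemma~\ref{lemma:largespectralgap} after optimizing over the Gaussian scale. Everything else (the integral-representation argument for membership in the Banach algebra, and the collapse of the idempotent/the $K_1$-homotopy) is then formal. A secondary subtlety is that $\sgn$ is not smooth, so strictly one should approximate it by normalizing functions $F_n$ in Definition~\ref{def:normalizingfunc} that agree with $\sgn$ on a neighborhood of $\mathrm{spec}(\widetilde D)$ — but since $0$ is a genuine gap, any such $F_n$ already satisfies $F_n(\widetilde D) = \sgn(\widetilde D)$ exactly, so this causes no difficulty.
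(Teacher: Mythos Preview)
Your approach is the paper's: show $\sgn(\widetilde D)$ lies in the unitization of $\cB(\widetilde X)^\Gamma_K$ via a Gaussian integral representation together with Lemma~\ref{lemma:largespectralgap}, then observe that with $F=\sgn$ the index representative collapses. Your constant check is also right: for $h(x)=\frac{2}{\sqrt\pi}xe^{-x^2}$ one has $\hat h(\xi)=-i\xi e^{-\xi^2/4}$, so $C\approx 1$, $\widehat C\approx 1/4$, and the threshold in Lemma~\ref{lemma:largespectralgap} becomes exactly $2(K_\Gamma+K)/\tau$.

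There is one technical gap in Step~2. In your representation $\sgn(x)=\frac{2}{\sqrt\pi}\int_0^\infty x\,e^{-t^2x^2}\,dt$ the integrand is $\widetilde D\,e^{-t^2\widetilde D^2}=\frac{1}{t}h(t\widetilde D)$; Lemma~\ref{lemma:largespectralgap} yields $\|h(t\widetilde D)\|_{1,K}\leqslant\lambda e^{-\varepsilon t^2}$, so the integrand is bounded only by $\frac{\lambda}{t}e^{-\varepsilon t^2}$, which is \emph{not} integrable near $t=0$. (Equivalently, one cannot write $\sgn(x)=\int_0^\infty f(tx)\,dt$ with $f$ Schwartz, since the right-hand side scales like $1/|x|$.) The paper handles this by splitting the range. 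Lemma~\ref{lemma:G_tfinite} shows directly---by decomposing $\widehat G$ into a compactly supported piece plus a Gaussian tail, and \emph{without} invoking the spectral gap---that $\|G_t(\widetilde D)\|_{1,K}<\infty$ for each fixed $t$, where $G$ is the error-function normalizer. Then Lemma~\ref{lemma:G-sgn} integrates $\frac{d}{dt}G_t(\widetilde D)=\frac{1}{t}h(t\widetilde D)$ only over $[1,\infty)$, where $\frac{\lambda}{t}e^{-\varepsilon t^2}$ \emph{is} integrable, to get $G_t(\widetilde D)\to\sgn(\widetilde D)$ in $\|\cdot\|_{1,K}$. With that convergence in hand, your Step~3 goes through verbatim (indeed in the odd case $e^{2\pi i q}=1$ on the nose for $q$ idempotent, so no homotopy is even needed).
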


For example, by the Lichnerowicz formula, line \eqref{eq:|D|>} holds when the scalar curvature function $\kappa(x)$ on $M$ satisfying that there exists $K\geqslant 0$ such that
$$
\kappa(x)>\frac{16(K_\Gamma+K)^2}{\tau^2}\textup{ for all } x\in X.
$$
In particular, if $K=0$ and the group $\Gamma$ has sub-exponential growth (i.e. $K_\Gamma=0$), then line \eqref{eq:|D|>} holds as long as $\widetilde D$ is invertible.
\begin{proof}[Proof of Theorem $\ref{thm:vanishing}$]
	Let $\sgn$ be the sign function
\[ \sgn(x) = \begin{cases}
1 & \textup{ if } x>0, \\
0 & \textup{ if } x=0, \\
-1 & \textup{ if } x<0.
\end{cases} \] 
	We will prove in Lemma \ref{lemma:G_tfinite} and Lemma \ref{lemma:G-sgn} that under the assumption \eqref{eq:|D|>}, $\|\sgn(\widetilde D)\|_{1, K}$ is finite, and  there exists a family of normalizing function 
	$\{G_t\}$ parameterized by $t\in[1,+\infty)$ such that
	$\|G_t(\widetilde D)\|_{1,K}$ is uniformly bounded and
	$$\lim_{t\to\infty}\|G_t(\widetilde D)-\sgn(\widetilde D)\|_{1,K}=0.$$
	Since $\sgn(\widetilde D)^2=1$, this shows that $\ind^\Gamma_{1,K}( D)$ is trivial.
\end{proof}
We consider the normalizing function  
\begin{equation}\label{eq:G(x)}
G(x)=\frac{2}{\sqrt\pi}\int_{-\infty}^x e^{-y^2}dy-1
\end{equation}
and set $G_t(x)=G(tx)$. 
\begin{lemma}\label{lemma:G_tfinite}
	For each $t>0$, $\|G_t(\widetilde D)\|_{1, K}$ is finite.
\end{lemma}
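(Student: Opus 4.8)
The plan is to estimate the weighted $l^1$-norm $\|G_t(\widetilde D)\|_{1,K}$ directly from the integral definition of $G_t$, writing $G(x) = \frac{2}{\sqrt\pi}\int_{-\infty}^x e^{-y^2}dy - 1$, so that $G'(x) = \frac{2}{\sqrt\pi}e^{-x^2}$ and $G_t'(x) = \frac{2t}{\sqrt\pi}e^{-t^2x^2}$. Since $G_t$ is bounded and $G_t'$ is a Schwartz function whose Fourier transform is again a (rescaled) Gaussian, and hence lies in $L^1(\R)$, the operator $G_t(\widetilde D)$ is a bounded $\Gamma$-equivariant operator; the only thing to check is that the sum $\sum_{\gamma\in\Gamma} e^{K\ell(\gamma)}\|G_t(\widetilde D)_\gamma\|_{\op}$ converges. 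The key point is that $G_t(\widetilde D) - G_t(0) \cdot \mathrm{id}$ (more precisely, the bounded operator $G_t(\widetilde D)$ modulo a scalar) has Fourier-theoretic control coming from $\widehat{G_t'}$.

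First I would reduce to Lemma \ref{lemma:Fourier}. For $\gamma$ with $\ell(\gamma)$ large, apply Lemma \ref{lemma:Fourier} to the function $f = G_t'$ (whose Fourier transform is in $L^1$): this controls $\|G_t'(\widetilde D)_\gamma\|_{\op}$. To pass from $G_t'$ to $G_t$ itself, I would note that one can write $G_t(\widetilde D)$, up to an additive constant, via its derivative; concretely, since the wave operator representation gives $G_t(\widetilde D) = G_t(0) + \frac{1}{2\pi}\int \widehat{G_t}(\xi) e^{i\xi\widetilde D}d\xi$ with the tail of $\widehat{G_t}$ controlled by that of $\widehat{G_t'}$ (as $\widehat{G_t'}(\xi) = i\xi\widehat{G_t}(\xi)$, so $|\widehat{G_t}(\xi)| \leqslant |\widehat{G_t'}(\xi)|/|\xi|$ for $|\xi|$ bounded away from $0$), the same Gaussian-in-$\xi$ decay of $\widehat{G_t'}$ forces Gaussian decay of $\|G_t(\widetilde D)_\gamma\|_{\op}$ in $\ell(\gamma)$. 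Explicitly, for $\ell(\gamma) > \mathfrak{N}_\mu$ one gets a bound of the form $\|G_t(\widetilde D)_\gamma\|_{\op} \leqslant A_t e^{-c_t \ell(\gamma)^2}$ for suitable positive constants $A_t, c_t$ depending on $t$ and $\mu$.

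Then the convergence of $\sum_{\gamma} e^{K\ell(\gamma)}\|G_t(\widetilde D)_\gamma\|_{\op}$ is immediate: split the sum into $\ell(\gamma) \leqslant \mathfrak{N}_\mu$ (finitely many terms, using that $\Gamma$ is finitely generated so balls are finite) and $\ell(\gamma) > \mathfrak{N}_\mu$, where we estimate
\[
\sum_{\ell(\gamma) > \mathfrak{N}_\mu} e^{K\ell(\gamma)}\|G_t(\widetilde D)_\gamma\|_{\op} \leqslant \sum_{n > \mathfrak{N}_\mu} \#\{\gamma : \ell(\gamma) = n\}\, e^{Kn}\, A_t e^{-c_t n^2} \leqslant C A_t \sum_{n} e^{(K_\Gamma + K)n - c_t n^2},
\]
using \eqref{eq:KGammaconvenience}. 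The quadratic term $-c_t n^2$ dominates the linear term $(K_\Gamma + K)n$, so this series converges, giving $\|G_t(\widetilde D)\|_{1,K} < \infty$.

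The main obstacle I anticipate is the passage from the operator-norm estimate on $G_t'(\widetilde D)_\gamma$ to one on $G_t(\widetilde D)_\gamma$: since $G_t$ is not itself $L^1$ or even $C_0$ (it tends to $\pm 1$), Lemma \ref{lemma:Fourier} does not apply to $G_t$ directly, and one must handle the additive constant and the $1/|\xi|$ factor in $\widehat{G_t}$ carefully — e.g. by working with $G_t(x) - \sgn(x)$ or by a finite-propagation splitting argument as in the proof of Lemma \ref{lemma:Fourier} itself, observing that $G_t(\widetilde D) - g(\widetilde D)$ has propagation $\leqslant \tau\ell(\gamma)/\sqrt\mu$ for an appropriate truncation $g$ of the wave-operator integral, hence vanishes after compression by $\psi$ and $\gamma\psi$ when $\ell(\gamma)$ is large. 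Once that bookkeeping is in place, everything else is routine. Note that, crucially, this lemma makes \emph{no} hypothesis on the spectral gap of $\widetilde D$ (unlike Lemma \ref{lemma:largespectralgap}); the finiteness here is purely a consequence of the Gaussian decay of $\widehat{G_t'}$ together with the exponential growth bound on $\Gamma$.
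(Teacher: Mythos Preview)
Your proposal is correct and arrives at essentially the same argument as the paper, though by a slightly more circuitous route. The paper proceeds more directly: it computes the distributional Fourier transform $\widehat G(\xi)=\frac{2i}{\xi}e^{-\xi^2/4}-2\pi\delta(\xi)$ explicitly and then splits $G=G^{(1)}+G^{(2)}$ via a smooth cutoff $\chi$ near $\xi=0$, so that $\widehat{G}^{(1)}=\chi\widehat G$ has compact support (hence $G^{(1)}(\widetilde D)$ has finite propagation and finite $\|\cdot\|_{1,K}$-norm automatically) and $\widehat{G}^{(2)}=(1-\chi)\widehat G$ is a genuine Schwartz function with Gaussian decay, to which Lemma~\ref{lemma:Fourier} applies directly to give $\|G^{(2)}(\widetilde D)_\gamma\|_\op\leqslant e^{-\varepsilon\ell(\gamma)^2}$ for large $\ell(\gamma)$. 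Your initial detour through $G_t'$ is unnecessary: the ``finite-propagation splitting argument'' you correctly identify at the end as the fix \emph{is} the paper's proof, and once you adopt it, the earlier discussion of passing from $G_t'(\widetilde D)_\gamma$ to $G_t(\widetilde D)_\gamma$ becomes redundant. Both approaches ultimately rest on the same two ingredients --- Gaussian decay of $\widehat G$ away from the origin plus finite propagation handling the singularity at $\xi=0$ --- and both correctly note that no spectral gap is needed here.
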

\begin{proof}
	Without loss of generality, let us assume $t=1$. 
	
	The distributional Fourier transform of $G$ is given by
	\begin{equation}
	\widehat G(\xi)=\frac{2i}{\xi}e^{-\frac{\xi^2}{4}}-2\pi\delta(\xi),
	\end{equation}
	where $\delta$ is the delta function concentrated at the origin.
	Let $\chi$ be a cut-off function such that $\chi(\xi)=1$ if $|\xi|<1$, $\chi(\xi)=0$ if $|\xi|>2$ and $|\chi(\xi)|\leqslant 1$  for all $\xi\in \mathbb R$. We write $G$ as a sum of two functions  $G = G^{(1)}+G^{(2)}$,  where the Fourier transforms of $G^{(1)}$ and $G^{(2)}$ are $\widehat{G}^{(1)}=\chi\cdot\widehat G$ and $\widehat{G}^{(2)}=(1-\chi)\cdot\widehat G$ respectively.
	
	As $\widehat{G}^{(1)}$ has compact support, it follows from the Fourier inverse transform formula (cf. line $\eqref{eq:fourier}$) that $G^{(1)}(\widetilde D)$ has finite propagation. Therefore $\|G^{(1)}(\widetilde D)\|_{1, K}$ is finite.
	On the other hand, $\widehat{G}^{(2)}$ is a Schwartz function with Gaussian decay. By Lemma \ref{lemma:Fourier}, there exists $\varepsilon>0$ such that
	$$\|G^{(2)}(\widetilde D)_\gamma\|_\op\leqslant  e^{-\varepsilon \ell(\gamma)^2}$$
	as long as $\ell(\gamma)$ is sufficiently large. It follows that  $\|G^{(2)}(\widetilde D)\|_{1, K}$ is also finite. This finishes the proof. 
\end{proof}
\begin{remark}\label{rk:bounded}
	The proof of Lemma \ref{lemma:G_tfinite} above does not require the invertibility of  $\widetilde D$.
\end{remark}

Note that, under the assumption that $\widetilde D$ is invertible, it is clear that 
\begin{equation*}
G_t(\widetilde D)\to \sgn(\widetilde D) \textup{ in the operator norm, as } t\to \infty.
\end{equation*}
The following lemma shows that the above convergence still holds with respect to the $\|\cdot\|_{1, K}$ norm, provided the spectral gap of $\widetilde D$ at zero is sufficiently large.

\begin{lemma}\label{lemma:G-sgn}
	If $|\widetilde D|>\frac{2(K_\Gamma+K)}{\tau}$, then $\|\sgn(\widetilde D)\|_{1, K}$ is finite. Furthermore, there exist $C>0$ and $\varepsilon>0$ such that for any $t\geqslant 1$,
	\begin{equation}
	\|G_t(\widetilde D)-\sgn(\widetilde D)\|_{1, K}\leqslant  Ce^{-\varepsilon t^2}.
	\end{equation}
\end{lemma}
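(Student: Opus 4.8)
The plan is to estimate $\|G_t(\widetilde D) - \sgn(\widetilde D)\|_{1,K}$ and $\|\sgn(\widetilde D)\|_{1,K}$ by writing the difference as a function of $\widetilde D$ applied through the functional calculus, and then applying Lemma \ref{lemma:largespectralgap}. The key observation is that $\sgn$ is bounded and smooth away from $0$, and on the spectrum of $\widetilde D$ (which avoids a neighborhood of $0$ by the spectral gap hypothesis) we have exact identities. First I would introduce the function $h_t(x) = G_t(x) - \sgn(x) = G(tx) - \sgn(x)$, which on $\mathbb R \setminus \{0\}$ satisfies $h_t(x) = -\tfrac{2}{\sqrt\pi}\int_{t|x|}^\infty e^{-y^2}\,dy \cdot \sgn(x)$, so that $|h_t(x)| \leqslant \tfrac{2}{\sqrt\pi}\int_{t|x|}^\infty e^{-y^2}\,dy$. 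The point is that $h_t$ is, up to a constant, of the form "tail of a Gaussian", and one checks that $h_t$ together with its Fourier transform has Gaussian decay with the appropriate rates. More precisely, $h_t'(x) = \tfrac{2t}{\sqrt\pi} e^{-t^2x^2}$ is a Gaussian of rate $t^2$, and integrating shows $|h_t(x)| \leqslant A e^{-t^2 x^2/2}$ (say), while $\widehat{h_t}$ can be read off from $\widehat{h_t'} = (i\xi)^{-1}$-type expressions; alternatively, since $h_t$ only needs to be evaluated on $\spec(\widetilde D)$ which is bounded away from $0$, one can multiply by a fixed smooth cutoff $\phi$ that is $1$ on $\{|x| \geqslant \sigma/2\}$ and vanishes near $0$, replacing $h_t$ by $\phi \cdot h_t$, which is now Schwartz with both it and its Fourier transform having Gaussian decay.

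The heart of the argument is then a direct application of Lemma \ref{lemma:largespectralgap}. Let $\sigma = \inf \spec(|\widetilde D|) > 0$. The hypothesis $|\widetilde D| > \tfrac{2(K_\Gamma + K)}{\tau}$ means $\sigma > \tfrac{2(K_\Gamma + K)}{\tau}$. I want to produce a Schwartz function $f$ agreeing with $h_t$ on $\spec(\widetilde D)$ such that $f$ and $\widehat f$ have Gaussian decay with rates $C, \widehat C$ satisfying $\sigma > \tfrac{K_\Gamma + K}{\tau \sqrt{C\widehat C}}$. Taking $f = \phi \cdot h_t$ as above, a computation of the decay rates gives roughly $C \sim t^2/2$ and $\widehat C$ bounded below by a universal constant (coming from the Gaussian in $\widehat G$, namely $e^{-\xi^2/4}$, so $\widehat C \sim 1/4$ after accounting for the $t$-rescaling $\widehat{G_t}(\xi) = t^{-1}\widehat G(\xi/t)$, which actually improves things). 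Since $C\widehat C$ grows with $t$, the inequality $\sigma > \tfrac{K_\Gamma+K}{\tau\sqrt{C\widehat C}}$ holds for all $t \geqslant 1$ once it holds at $t = 1$; and at $t = 1$ it reduces (after tracking the constants $\tfrac12$ and $\tfrac14$, whose product is $\tfrac18$, giving $\sqrt{C\widehat C} = \tfrac{1}{\sqrt 8} \sim \tfrac{1}{2.83}$, so $\tfrac{K_\Gamma+K}{\tau\sqrt{C\widehat C}} \sim \tfrac{2.83(K_\Gamma+K)}{\tau}$) — hmm, this is close to but slightly off from $\tfrac{2(K_\Gamma+K)}{\tau}$, so in the actual proof one should be more careful with the optimal constants, e.g. by choosing $f$ more cleverly or by exploiting that $h_t$ is the tail of a Gaussian rather than a generic Gaussian. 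In any case, Lemma \ref{lemma:largespectralgap} then yields $\varepsilon > 0$ and $\lambda > 0$ with $\|f(t\widetilde D)\|_{1,K} = \|h_t(\widetilde D)\|_{1,K} \leqslant \lambda e^{-\varepsilon t^2}$, which is exactly the claimed estimate $\|G_t(\widetilde D) - \sgn(\widetilde D)\|_{1,K} \leqslant C e^{-\varepsilon t^2}$. Finiteness of $\|\sgn(\widetilde D)\|_{1,K}$ then follows from the triangle inequality together with Lemma \ref{lemma:G_tfinite} (which gives $\|G_1(\widetilde D)\|_{1,K} < \infty$): indeed $\|\sgn(\widetilde D)\|_{1,K} \leqslant \|G_1(\widetilde D)\|_{1,K} + \|G_1(\widetilde D) - \sgn(\widetilde D)\|_{1,K} < \infty$.

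The main obstacle I anticipate is bookkeeping of the Gaussian decay rates so that the threshold in Lemma \ref{lemma:largespectralgap} matches the stated hypothesis $|\widetilde D| > \tfrac{2(K_\Gamma+K)}{\tau}$ exactly, rather than with a worse constant. The cleanest route is probably to avoid the auxiliary cutoff $\phi$ — which degrades the Fourier-side decay — and instead argue directly: on $\spec(\widetilde D) \subseteq \{|x| \geqslant \sigma\}$ one has the pointwise bound $|h_t(x)| \leqslant A_0 e^{-C' t^2 \sigma^2}$ for the operator norm $\|h_t(\widetilde D)\|_\op$, mirroring the structure of the proof of Lemma \ref{lemma:largespectralgap} itself. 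That is, rather than invoking Lemma \ref{lemma:largespectralgap} as a black box, I would re-run its proof with $f = h_t$: split $\sum_\gamma e^{K\ell(\gamma)}\|h_t(\widetilde D)_\gamma\|_\op$ into $\ell(\gamma)$ small and $\ell(\gamma)$ large, use $\|h_t(\widetilde D)_\gamma\|_\op \leqslant \|h_t(\widetilde D)\|_\op \leqslant A_0 e^{-t^2\sigma^2}$ (from $|h_t| \leqslant A_0 e^{-x^2}$ on $\{|x|\geqslant\sigma\}$, $t\geqslant 1$) in the first range, and Lemma \ref{lemma:Fourier} applied to $h_t$ (whose Fourier transform inherits Gaussian decay from $\widehat G$, with the crucial rate $\tfrac14$ in the exponent of $e^{-\xi^2/4}$) in the second range, then choose the splitting threshold $\ell(\gamma) \leqslant st^2 + \mathfrak N_\mu$ and a parameter $\mu > 1$ so both resulting geometric-type sums decay like $e^{-\varepsilon t^2}$. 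Working it out this way, the constraint becomes exactly $\sigma > \tfrac{2(K_\Gamma+K)}{\tau}$ because the relevant product of rates is $1 \cdot \tfrac14$ inside a square root times a factor of $2$ from the $e^{-\xi^2/4}$ versus $e^{-x^2}$ mismatch — i.e. the "$2$" in the threshold is precisely $2 = 1/\sqrt{1/4}$. This is the step that requires genuine care; everything else (the identity for $h_t$, its smoothness and decay, the triangle-inequality deduction of finiteness) is routine.
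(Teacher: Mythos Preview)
Your approach has a genuine gap: the assertion that $\widehat{h_t}$ ``inherits Gaussian decay from $\widehat G$'' is false. Writing $h_t(x) = G(tx) - \sgn(x)$, one has $h_t'(x) = \tfrac{2t}{\sqrt\pi}e^{-t^2x^2} - 2\delta(x)$ as distributions, whence $i\xi\,\widehat{h_t}(\xi) = \widehat{h_t'}(\xi) = 2\bigl(e^{-\xi^2/(4t^2)} - 1\bigr)$ and therefore
\[
\widehat{h_t}(\xi) = \frac{2\bigl(e^{-\xi^2/(4t^2)} - 1\bigr)}{i\xi}.
\]
For large $|\xi|$ this behaves like $2i/\xi$, which is not even in $L^1(\R)$. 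Consequently Lemma~\ref{lemma:Fourier} gives no useful bound on $\|h_t(\widetilde D)_\gamma\|_\op$, and the large-$\ell(\gamma)$ half of your split sum cannot be controlled. The cutoff variant $\phi\cdot h_t$ fares no better for the sharp threshold: a smooth bump function $\phi$ is not real-analytic, so by Paley--Wiener type considerations $\widehat{\phi h_t}$ decays faster than any polynomial but not exponentially, let alone like a Gaussian; thus Lemma~\ref{lemma:largespectralgap} still does not apply, and there is no way to recover the constant $2$ in the hypothesis.

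The paper avoids this obstacle by differentiating in $t$ rather than working with $h_t$ directly. One has
\[
\frac{d}{dt}G_t(\widetilde D) = \frac{2}{\sqrt\pi}\widetilde D\, e^{-t^2\widetilde D^2} = \frac{1}{t}\,h(t\widetilde D), \qquad h(x) = \frac{2}{\sqrt\pi}xe^{-x^2}.
\]
Now $h$ is a genuine Schwartz function, and both $h$ and $\widehat h(\xi) = -i\xi e^{-\xi^2/4}$ have Gaussian decay, with rates arbitrarily close to $1$ and $1/4$ respectively. Lemma~\ref{lemma:largespectralgap} then applies as a black box under exactly the hypothesis $\sigma > (K_\Gamma+K)\big/\bigl(\tau\sqrt{1\cdot\tfrac14}\bigr) = 2(K_\Gamma+K)/\tau$, yielding $\|h(t\widetilde D)\|_{1,K}\leqslant C_1 e^{-\varepsilon t^2}$. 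Integrating $\|\tfrac{d}{dt}G_t(\widetilde D)\|_{1,K}\leqslant \tfrac{1}{t}C_1 e^{-\varepsilon t^2}$ from $t$ to $T$ shows that $G_t(\widetilde D)$ is Cauchy in $\|\cdot\|_{1,K}$ as $T\to\infty$, hence converges to $\sgn(\widetilde D)$ with the stated rate; finiteness of $\|\sgn(\widetilde D)\|_{1,K}$ then follows exactly as you say, via Lemma~\ref{lemma:G_tfinite}. The missing idea in your proposal is precisely this passage to the $t$-derivative, which replaces the non-integrable tail of $\widehat{h_t}$ by the Gaussian $\widehat h$.
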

\begin{proof}
	It follows from the proof of Lemma \ref{lemma:G_tfinite} that $G_t(\widetilde D)$ is a differentiable in $t$ with respect to the $\|\cdot\|_{1,K}$ norm. We have 
	\begin{equation}\label{eq:G_t'}
	\frac{d}{dt}G_t(\widetilde D_{\cyc})=\frac{2}{\sqrt\pi}\widetilde D_{\cyc} e^{-t^2\widetilde D_{\cyc}^2}=\frac{1}{t}h(t\widetilde D_{\cyc}),
	\end{equation}
	where $h(x)=\frac{2}{\sqrt\pi}xe^{-x^2}$. Note that $h$ and $\hat h$ have Gaussian decay with rates $1$ and $1/4$ respectively. By Lemma \ref{lemma:largespectralgap}, there exist positive constants $C_1$ and $\varepsilon>0$ such that
	\begin{equation}
	\|h(t\widetilde D)\|<C_1e^{-\varepsilon t^2}.
	\end{equation}
	We conclude that  
	\begin{align*}
	\|G_t(\widetilde D)-G_T(\widetilde D)\|_{1,K}&\leqslant \int_{t}^{T}\|\frac{d}{dt}G_t(\widetilde D)\|_{1,K}\leqslant 
	\int_t^T\frac{1}{s}C_1e^{-\varepsilon s^2}ds\\
	&\leqslant Ce^{-\varepsilon t^2}
	\end{align*}
	for some fixed $C>0$ and for all $t<T$. The lemma follows by letting $T$ go to infinity.
\end{proof}
\subsection{$l^1$-higher rho invariant}\label{subsec:higher rho}

In this subsection, we introduce $l^1$-higher rho invariants for Dirac operators on complete spin manifolds whose scalar curvature is sufficiently large.

\begin{definition}\label{def:local,obst}
	We denote by $\cL_{L}(\widetilde X)^\Gamma_K$ the completion of the collection of maps $f\colon[0,\infty)\to \C(\widetilde X)^\Gamma$ that satisfy
	\begin{enumerate}
		\item $f$ is uniformly bounded and uniformly continuous in $t$ with respect to $\|\cdot \|_{1,K}$, cf., Definition \ref{def:konenorm},
		\item the propagation of $f$ goes to zero as $t$ goes to infinity,
	\end{enumerate}
	with respect to the norm $\|\cdot \|_{1,K}$ given by 
	$$\|f\|_{1,K}:=\sup_{s\geqslant 0}\|f(s)\|_{1,K}.$$
	Furthermore, we define
	$$\cL_{L,0}(\widetilde X)^\Gamma_K=\{f\in \cL_{L}(\widetilde X)^\Gamma_K: f(0)=0 \}.$$
\end{definition}

Let us first prove the following technical lemma, which gives a rather general sufficient condition for producing elements in the Banach localization algebra 
$\cB_{ L}(\widetilde X)^\Gamma_K$.

\begin{lemma}\label{lemma:local_appro}
	Let $f\in C_0(\R)$ be a continuous function on $\mathbb R$ vanishing at infinity.  Let $\chi$ be a smooth cut-off function on $\mathbb R$ such that $\chi(\xi)=0$ on $[-1,1]$, $\chi=1$ for $|\xi|\geqslant 2$ and $|\chi(\xi)|\leqslant 1$ for all $\xi\in \mathbb R$. Denote $\chi_N(\xi)=\chi(\xi/N)$ for $N>0$. If there exist positive numbers $N$, $C_1$, and $C_2$ such that $\chi_N\cdot f\in C_0(\R)$ and 
	$$|\chi_N(\xi)\hat f(\xi)|\leqslant C_1e^{-C_2|\xi|},$$
	then there exists $s_0\geqslant 0$ such that 
	the path
	$$\textstyle h(s) = f\big(\frac{\widetilde D}{s+s_0}\big) \textup{ with } s\in [0, \infty)$$
	defines an element in $\cB_{ L}(\widetilde X)^\Gamma_K$. Moreover, the norm of $h$ only depends on $N,C_1,C_2$ and the geometric data of $X$.
\end{lemma}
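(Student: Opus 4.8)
The plan is to decompose $f$ into a "local" part with compactly supported Fourier transform and a "remainder" part whose Fourier transform has exponential decay, exactly as in the proof of Lemma~\ref{lemma:G_tfinite}, but now tracking the dependence on the parameter $s$. Write $f = f^{(1)} + f^{(2)}$ where $\widehat{f^{(1)}} = (1-\chi_N)\widehat f$ has support in $[-2N, 2N]$ and $\widehat{f^{(2)}} = \chi_N \widehat f$ satisfies the exponential bound $|\widehat{f^{(2)}}(\xi)| \leqslant C_1 e^{-C_2|\xi|}$. Since scaling $\widetilde D \mapsto \widetilde D/(s+s_0)$ corresponds on the Fourier side to $\widehat{f}(\xi) \mapsto (s+s_0)\widehat{f}((s+s_0)\xi)$, the first piece $f^{(1)}(\widetilde D/(s+s_0))$ has propagation at most $2N/(s+s_0)$, which goes to zero as $s \to \infty$; hence it automatically satisfies condition (2) in Definition~\ref{def:local,obst}, and its $\|\cdot\|_{1,K}$-norm is controlled by the finitely many group elements within that propagation radius together with the sup norm of $f^{(1)}$ — so it is uniformly bounded in $s$. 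Uniform continuity in $s$ follows from differentiating under the integral sign (the Fourier integral is over the compact set $[-2N,2N]$) and estimating $\frac{d}{ds} f^{(1)}(\widetilde D/(s+s_0))$, which again has small propagation.

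For the second piece $f^{(2)}(\widetilde D/(s+s_0))$, I would apply Lemma~\ref{lemma:Fourier}: with $\mu > 1$ fixed, for $\gamma$ with $\ell(\gamma)$ large one gets
\begin{equation*}
\|f^{(2)}(\widetilde D/(s+s_0))_\gamma\|_\op \leqslant \frac{1}{2\pi}\int_{|\xi| > \frac{\tau\ell(\gamma)}{\mu}} (s+s_0)\,|\widehat{f^{(2)}}((s+s_0)\xi)|\,d\xi \leqslant \frac{C_1}{2\pi} \int_{|\eta| > \frac{(s+s_0)\tau\ell(\gamma)}{\mu}} e^{-C_2|\eta|}\,d\eta,
\end{equation*}
which is bounded by $\frac{C_1}{\pi C_2} e^{-C_2 (s+s_0)\tau\ell(\gamma)/\mu}$. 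Multiplying by $e^{K\ell(\gamma)}$ and summing over $\gamma$, using $\#\{\gamma : \ell(\gamma) \leqslant n\} \leqslant C e^{K_\Gamma n}$, the series converges provided $C_2(s+s_0)\tau/\mu > K_\Gamma + K$; since the only constraint is a lower bound on $s+s_0$, choosing $s_0$ large enough (depending only on $N, C_1, C_2, K_\Gamma, K, \tau$ and $\mathfrak N_\mu$, i.e. the geometric data of $X$) makes this hold for all $s \geqslant 0$, and in fact gives a uniform bound $\|f^{(2)}(\widetilde D/(s+s_0))\|_{1,K} \leqslant \Lambda$ independent of $s$. The propagation of $f^{(2)}(\widetilde D/(s+s_0))$ need not be finite, but the tail estimate above shows that it can be approximated in $\|\cdot\|_{1,K}$ by finite-propagation operators uniformly in $s$ — so $f^{(2)}(\widetilde D/(s+s_0))$ lies in the completion $\cB(\widetilde X)^\Gamma_K$ and, since the Gaussian-type tail is governed by $e^{-C_2(s+s_0)\tau\ell(\gamma)/\mu}$ which only improves as $s$ grows, one can argue that the path is uniformly continuous in $s$ (again differentiate in $s$ and estimate, noting $\frac{d}{ds}$ brings down a factor polynomial in $\ell(\gamma)$ which is absorbed by the exponential). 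Combining both pieces, $h(s) = f(\widetilde D/(s+s_0))$ is uniformly bounded, uniformly continuous, with propagation controlled appropriately after the approximation, hence defines an element of $\cB_L(\widetilde X)^\Gamma_K$, with norm depending only on $N, C_1, C_2$ and the geometric data of $X$.

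\textbf{Main obstacle.} The routine Fourier/summation estimates are straightforward adaptations of Lemmas~\ref{lemma:Fourier}--\ref{lemma:largespectralgap}. The delicate point is \emph{uniform continuity in $s$} of the second piece: unlike the first piece, $f^{(2)}(\widetilde D/(s+s_0))$ is not a finite-propagation operator for fixed $s$, so one cannot naively differentiate a compactly supported Fourier integral. I expect the cleanest route is to show directly that $s \mapsto f^{(2)}(\widetilde D/(s+s_0))$ is Lipschitz (or at least continuous) in the $\|\cdot\|_{1,K}$-norm by writing the difference $f^{(2)}(\widetilde D/(s+s_0)) - f^{(2)}(\widetilde D/(s'+s_0))$ as an integral of the derivative and checking that $\frac{d}{ds} f^{(2)}(\widetilde D/(s+s_0))$ still satisfies a Lemma~\ref{lemma:Fourier}-type bound with an exponentially decaying tail — the extra polynomial factor in $\xi$ (hence in $\ell(\gamma)$) coming from $\frac{d}{ds}$ is harmless because the exponent $C_2(s+s_0)\tau/\mu - K_\Gamma - K$ is strictly positive, leaving room to absorb polynomial growth. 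One must also verify that the choice of $s_0$ can be made uniform, i.e. depends only on the stated data; this is where it matters that $\mathfrak N_\mu$ in Lemma~\ref{lemma:Fourier} depends only on $C_0$, $\mathrm{diam}\,\mathcal F$, $\tau$, and $\mu$, all of which are part of the geometric data of $X$.
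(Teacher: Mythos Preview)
Your proposal is correct and contains the right estimates, but the paper organizes the argument more cleanly and avoids what you flag as the main obstacle.

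Rather than a fixed split $f=f^{(1)}+f^{(2)}$ at scale $N$, the paper introduces a \emph{varying} truncation: for every $\alpha>N$ it sets
\[
g_\alpha(x)=\frac{1}{2\pi}\int \chi(\alpha^{-1}\xi)\hat f(\xi)e^{ix\xi}\,d\xi,\qquad g_{\alpha,s}(x)=g_\alpha(x/s),
\]
and works with the approximating path $T_\alpha(s)=f_s(\widetilde D)-g_{\alpha,s}(\widetilde D)$. The point is that $T_\alpha(s)$ has Fourier transform supported in $|\xi|\leqslant 2\alpha/s$, hence finite propagation $\leqslant 2\alpha/s\to 0$; so for each fixed $\alpha$ the path $T_\alpha$ already lives in the pre-completion defining $\cB_L(\widetilde X)^\Gamma_K$, and uniform continuity in $s$ is automatic from the compact Fourier support. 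The remaining task is to show $\|g_{\alpha,s}(\widetilde D)\|_{1,K}\to 0$ as $\alpha\to\infty$ \emph{uniformly in $s\geqslant s_0$}. The paper does this by combining your two bounds multiplicatively (via a geometric mean): the sup-norm bound $\|g_{\alpha,s}(\widetilde D)_\gamma\|_{\op}\leqslant \frac{C_1}{\pi C_2}e^{-C_2\alpha}$ and the Lemma~\ref{lemma:Fourier} bound $\|g_{\alpha,s}(\widetilde D)_\gamma\|_{\op}\leqslant C_1'e^{-C_2's\ell(\gamma)}$ together give $\|g_{\alpha,s}(\widetilde D)_\gamma\|_{\op}\leqslant Ce^{-C_2\alpha/2-C_2's\ell(\gamma)/2}$, and summing over $\gamma$ yields $\|g_{\alpha,s}(\widetilde D)\|_{1,K}\leqslant 2Ce^{-C_2\alpha/2}$ once $s_0$ is chosen so that $e^{(K+K_\Gamma)-s_0C_2'/2}\leqslant\frac12$.

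What this buys: your fixed decomposition leaves you having to show that the $f^{(2)}$-path itself lies in $\cB_L$, which (as you note) requires a further approximation by finite-propagation paths---and carrying that out is exactly the varying-$\alpha$ argument above. By going straight to the $\alpha$-family, the paper sidesteps your ``main obstacle'' entirely: uniform continuity of $T_\alpha$ in $s$ is trivial because its Fourier support is compact, and you never need to differentiate the infinite-propagation piece $f^{(2)}$ in $s$. Your initial split at $N$ is not wrong, just redundant.
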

\begin{proof}
	For any $\alpha>N$, we define
	$$g_\alpha(x)=\frac{1}{2\pi}\int \chi(\alpha^{-1}\xi)\hat f(\xi)e^{ix\xi}d\xi.$$
	Set  $f_s(x)=f(x/s)$ and $g_{\alpha,s}(x)=g_\alpha(x/s)$.
	Since
	$$|g_\alpha(x)|\leqslant \frac{1}{2\pi}\int_{|\xi|>\alpha}C_1e^{-C_2|\xi|}d\xi
	\leqslant \frac{C_1}{\pi C_2}e^{-C_2\alpha},
	$$
	we have
	\begin{equation}\label{eq:gas1}
	\|g_{\alpha,s}(\widetilde D)_\gamma\|_\op\leqslant \|g_{\alpha,s}(\widetilde D)\|_\op\leqslant \frac{C_1}{\pi C_2}e^{-C_2\alpha}.
	\end{equation}
	
	On the other hand, the Fourier transform of $g_{\alpha,s}$ is given by
	$$\hat{g}_{\alpha,s}(\xi)=s\cdot \hat{g}_{\alpha}(s\xi)=\chi(\alpha^{-1}s\xi)\hat f(s\xi).
	$$
	If $s\geqslant 1$, then $|\hat{g}_{\alpha,s}(\xi)|\leqslant C_1 e^{-C_2s|\xi|}$.
	By Lemma \ref{lemma:Fourier}, there exist $C_1'>0$ and $C_2'>0$ independent of $\alpha,s$ such that
	\begin{equation}\label{eq:gas2}
	\|g_{\alpha,s}(\widetilde D)_\gamma\|_\op\leqslant C_1' e^{-C_2's\ell(\gamma)}.
	\end{equation}
	By combining line \eqref{eq:gas1} and \eqref{eq:gas2} together, we have for any $s\geqslant 1$, 
	$$ \|g_{\alpha,s}(\widetilde D)_\gamma\|_\op\leqslant Ce^{-\frac{C_2\alpha}{2} -\frac{C_2's}{2}\ell(\gamma)},
	$$
	where $C=\sqrt{C_1C_1'}$.
	Therefore, if we choose $s_0$ large enough such that \[ e^{(K +K_\Gamma)-s_0C_2'/2}\leqslant \frac{1}{2},\] then for any $s\geqslant s_0$,
	\begin{align*}
	\|g_{\alpha,s}(\widetilde D_X)\|_{1, K}&\leqslant \sum_{\gamma\in \Gamma} e^{K\ell(\gamma)} Ce^{-\frac{C_2\alpha}{2} -\frac{C_2's}{2}\ell(\gamma)}
	\leqslant Ce^{-\frac{C_2\alpha}{2}}\sum_{n=0}^\infty e^{-\frac{C_2's}{2}n+(K_\Gamma + K) n}
	\leqslant 2Ce^{-\frac{C_2\alpha}{2}}.
	\end{align*}
	Hence if $s\geqslant s_0$, then
	$$\|f_{s}(\widetilde D)-(f_s(\widetilde D)-g_{\alpha,s}(\widetilde D))\|_{1, K} = \|g_{\alpha,s}(\widetilde D)\|_{1, K}
	\leqslant 2Ce^{-\frac{C_2\alpha}{2}}.
	$$
	Now  observe that the operator  $(f_s(\widetilde D)-g_{\alpha,s}(\widetilde D))$ has propagation $\leqslant 2\alpha/s$. In particular, the path 
	\[ s \mapsto T_\alpha(s) = f_s(\widetilde D)-g_{\alpha,s}(\widetilde D) \] 
	with $s\in [0, \infty)$ is a uniformly bounded and uniformly continuous with respect to the $\|\cdot\|_{1, K}$-norm and the propagation of $T_\alpha(s)$ goes to $0$,  as $s\to \infty$.  We conclude that  the element 
	\[ \textstyle h(s) = f\big(\frac{\widetilde D}{s+s_0}\big)  \] 
	lies in $\cB_{K}(\widetilde X)^\Gamma$ for each $s\in [0, \infty)$, and the path $h$  can be approximated uniformly by paths such as $T_\alpha$ by letting $\alpha \to \infty$. This shows that $h$ defines an element in $\cB_{ L}(\widetilde X)^\Gamma_K$, hence finishes the proof.
\end{proof}

The following theorem gives a large collection of functions $f$ in $C_0(\mathbb R)$ that satisfy the condition  of Lemma \ref{lemma:local_appro}.
\begin{theorem}[{\cite[Theorem IX.14]{MR0493420}}]\label{thm:analytic-cont}
	Assume that $f\in C_0(\R)$ admits an analytic continuation to  $\{z\in \mathbb C:|\mathrm{Im}z|<a\}$ for some $a>0$. Let $\varphi(x) = f(x+ ib)$ for some fixed $b\in \mathbb R$. Suppose there is $M>0$ such that $\varphi\in L^1(\R)$ and $\|\varphi\|\leqslant M$ for all $|b|<2a/3$. Then $\hat f$ is a bounded continuous function on $\R$. Furthermore, there exists $C>0$ that only depends on $M$ and $a$ such that
	$$|\hat f(\xi)|\leqslant Ce^{-\frac{a|\xi|}{3}}$$
	for all $\xi\in \mathbb R$.
\end{theorem}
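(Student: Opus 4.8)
The plan is to prove this by the classical Paley--Wiener contour-shift argument. The first assertion is immediate: taking $b=0$, the hypothesis says $f=\varphi\in L^1(\R)$, so $\hat f(\xi)=\int_\R f(x)e^{-ix\xi}\,dx$ is a bounded continuous function with $\|\hat f\|_\infty\le M$. For the exponential decay, fix $\xi>0$ and a parameter $b\in(0,2a/3)$. Since $z\mapsto f(z)e^{-iz\xi}$ is holomorphic on the strip $\{|\mathrm{Im}\,z|<a\}$, which contains the closed strip $\{-b\le\mathrm{Im}\,z\le 0\}$, Cauchy's theorem applied to the rectangle with vertices $\pm T$ and $\pm T-ib$ gives
\[ \int_{-T}^{T} f(x)e^{-ix\xi}\,dx \;=\; \int_{-T}^{T} f(x-ib)e^{-i(x-ib)\xi}\,dx \;-\; V_T^{+} \;-\; V_T^{-}, \]
where $V_T^{\pm}$ denote the integrals of $f(z)e^{-iz\xi}$ over the two vertical sides at $\mathrm{Re}\,z=\pm T$, and one checks $|V_T^{\pm}|\le\int_0^b|f(\pm T-it)|\,e^{-t\xi}\,dt\le\int_0^b|f(\pm T-it)|\,dt$.

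The key point is that the vertical terms vanish along a suitable subsequence $T_n\to\infty$. By hypothesis $\|f(\cdot-it)\|_{L^1}\le M$ for all $|t|<2a/3$, so Tonelli's theorem gives $\int_\R\big(\int_0^b|f(x-it)|\,dt\big)\,dx=\int_0^b\|f(\cdot-it)\|_{L^1}\,dt\le Mb<\infty$; hence $x\mapsto\int_0^b|f(x-it)|\,dt$ lies in $L^1(\R)$ and in particular tends to $0$ along some sequence $|x|=T_n\to\infty$, forcing $|V_{T_n}^{\pm}|\to 0$. Since moreover $f,\,f(\cdot-ib)\in L^1(\R)$, letting $T=T_n\to\infty$ in the displayed identity yields the contour shift
\[ \hat f(\xi)=\int_\R f(x-ib)\,e^{-i(x-ib)\xi}\,dx=e^{-b\xi}\int_\R f(x-ib)\,e^{-ix\xi}\,dx, \]
so that $|\hat f(\xi)|\le e^{-b\xi}\,\|f(\cdot-ib)\|_{L^1}\le Me^{-b\xi}$. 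Letting $b\uparrow 2a/3$ gives $|\hat f(\xi)|\le Me^{-2a\xi/3}\le Me^{-a\xi/3}$ for all $\xi>0$; the case $\xi<0$ is handled identically by shifting the contour upward to $\mathrm{Im}\,z=b>0$, and $\xi=0$ follows from $\|\hat f\|_\infty\le M$. Taking $C=M$ completes the argument.

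The only step that is not purely formal is the vanishing of the vertical boundary integrals $V_T^{\pm}$: one cannot simply send $T\to\infty$ because $f$ need not decay uniformly in the imaginary direction as $\mathrm{Re}\,z\to\pm\infty$, and the $L^1$-in-$x$, bounded-in-$t$ control on $f$ provided by the hypothesis is precisely what lets one extract the good subsequence $T_n$ via Tonelli's theorem. Since this theorem is quoted verbatim from \cite[Theorem IX.14]{MR0493420}, in the paper it suffices to cite it; the sketch above is recorded only for the reader's convenience.
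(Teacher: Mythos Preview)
The paper does not prove this theorem at all: it is stated with a citation to Reed--Simon and used as a black box. Your sketch is the standard Paley--Wiener contour-shift argument (and is essentially how Reed--Simon prove it), and it is correct as written; in fact you obtain the sharper bound $|\hat f(\xi)|\le Me^{-2a|\xi|/3}$, which of course implies the stated one. Since you already note that a citation suffices, nothing further is needed.
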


Given a closed spin Riemannian manifold $X$ with metric $g$, we assume that the scalar curvature function $\kappa(x)$ of $g$ satisfying that there exists $K\geqslant 0$ such that
\begin{equation}\label{eq:k>}
\kappa(x)>\frac{16(K_\Gamma+K)^2}{\tau^2}\textup{ for all } x\in X,
\end{equation}
 where $K_\Gamma$ and $\tau$ are given in line \eqref{eq:KGammaconvenience} and \eqref{eq:tau} respectively.

Assume $\dim X$ is odd for the moment.  Let us define
\begin{equation}\label{eq:u_s}
u_s=\begin{cases}
\exp(2\pi i F(s^{-1}\widetilde D))&s>0,\\
1&s=0,
\end{cases}
\end{equation}
where $F(x)\coloneqq\frac{G(x)+1}{2}$ and  $G$ is the function defined in line \eqref{eq:G(x)}.

\begin{lemma}\label{lemma:oddhighrho}
	The path $\{u_s\}_{s\in [0, \infty)}$ given in line \eqref{eq:u_s} is an invertible element in $(\cL_{L,0}(\widetilde X)^\Gamma_K)^+$, the unitization of $\cL_{ L,0}(\widetilde X)^\Gamma_K$.
\end{lemma}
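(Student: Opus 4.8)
The plan is to verify the two defining conditions of $\cL_{L,0}(\widetilde X)^\Gamma_K$ from Definition~\ref{def:local,obst}, namely that $s\mapsto u_s$ is a uniformly bounded, uniformly continuous path in $\|\cdot\|_{1,K}$ with propagation tending to zero as $s\to\infty$, that $u_0=1$, and that each $u_s$ is invertible. First I would observe that, since $\dim X$ is odd and $\widetilde D$ is invertible (the scalar curvature hypothesis \eqref{eq:k>} together with the Lichnerowicz formula forces $|\widetilde D|$ to be bounded below by $\frac{4(K_\Gamma+K)}{\tau}$, which is well within the range of Theorem~\ref{thm:vanishing}), the function $F(x)=\frac{G(x)+1}{2}$ satisfies $F(x)\to 0$ as $x\to-\infty$ and $F(x)\to 1$ as $x\to+\infty$, so $e^{2\pi i F(s^{-1}\widetilde D)}\to 1$ in operator norm as $s\to\infty$ on the spectrum away from zero. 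The key point is to upgrade these operator-norm statements to statements in the weighted norm $\|\cdot\|_{1,K}$, and for this I would apply Lemma~\ref{lemma:local_appro} (via the analytic-continuation criterion of Theorem~\ref{thm:analytic-cont}) to the relevant functions.

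Concretely, the invertible element to study is $u_s-1 = \bigl(e^{2\pi i F(s^{-1}\widetilde D)}-1\bigr)$. I would write $e^{2\pi i F(x)}-1 = \phi(x)$ and note that $\phi\in C_0(\R)$ because $F(x)-0$ and $F(x)-1$ both decay (in fact with Gaussian rate, since $G$ is built from the Gauss error function) as $x\to\mp\infty$, making $e^{2\pi iF(x)}$ approach $1$ at both ends. Moreover $\phi$ extends analytically to a horizontal strip $\{|\mathrm{Im}\,z|<a\}$: $F$ itself is entire (it is an integral of $e^{-y^2}$, an entire function, shifted by a constant), and along horizontal lines $F(x+ib)-F(x)$ stays bounded while $\mathrm{Re}\,F(x+ib)$ still tends to $0$ and $1$ at $\mp\infty$ fast enough that $\phi(\cdot+ib)\in L^1$ with a uniform bound for $|b|<2a/3$. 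Hence Theorem~\ref{thm:analytic-cont} gives $|\hat\phi(\xi)|\leqslant Ce^{-a|\xi|/3}$, so the hypothesis of Lemma~\ref{lemma:local_appro} is met (with $\chi_N\hat\phi$ still exponentially bounded for any $N$), and therefore the path $s\mapsto \phi\bigl(\frac{\widetilde D}{s+s_0}\bigr) = u_{s+s_0}-1$ defines an element of $\cB_{L}(\widetilde X)^\Gamma_K = \cL_L(\widetilde X)^\Gamma_K$ for a suitable $s_0\geqslant 0$. A reparametrization in $s$ (absorbing $s_0$, or equivalently handling the compact parameter interval $[0,s_0]$ by direct operator-norm continuity together with finiteness of $\|u_s-1\|_{1,K}$ for each fixed $s>0$, which follows from Lemma~\ref{lemma:G_tfinite}-type estimates applied to $\phi$) then shows $s\mapsto u_s-1$ lies in $\cL_L(\widetilde X)^\Gamma_K$ on all of $[0,\infty)$; since $u_0-1=0$, it in fact lies in $\cL_{L,0}(\widetilde X)^\Gamma_K$.

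It remains to check invertibility of $u_s$ in the unitization $(\cL_{L,0}(\widetilde X)^\Gamma_K)^+$. Here the inverse is of course the path $s\mapsto u_s^{-1} = e^{-2\pi i F(s^{-1}\widetilde D)}$, and the same argument applied to $\bar\phi(x) = e^{-2\pi iF(x)}-1$ (which has the identical analytic and decay properties) shows $s\mapsto u_s^{-1}-1 \in \cL_{L,0}(\widetilde X)^\Gamma_K$; then $u_s u_s^{-1}=u_s^{-1}u_s=1$ holds pointwise in $s$ by functional calculus, so $\{u_s\}$ is invertible in the unitization with inverse $\{u_s^{-1}\}$.

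The main obstacle I anticipate is the verification that $\phi(x)=e^{2\pi iF(x)}-1$ genuinely satisfies the hypotheses of Theorem~\ref{thm:analytic-cont} uniformly on a strip --- that is, controlling $\|\phi(\cdot+ib)\|_{L^1}$ uniformly for $|b|<2a/3$. Since $F(z) = \frac{1}{\sqrt\pi}\int_{-\infty}^{z}e^{-y^2}dy$ grows like $\frac{e^{-z^2}}{\text{(polynomial)}}$ along lines where $\mathrm{Re}(z^2)>0$ but can grow when $\mathrm{Re}(z^2)<0$, one must choose the strip width $a$ small enough that $\mathrm{Re}\,F(x+ib)$ still has the correct limits and that $e^{2\pi iF(x+ib)}-1$ is absolutely integrable; this is the one genuinely analytic estimate in the argument, and everything else is a bookkeeping application of the lemmas already established. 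Alternatively, one can sidestep the strip estimate entirely by checking the Lemma~\ref{lemma:local_appro} hypothesis directly: decompose $\phi = \phi^{(1)}+\phi^{(2)}$ as in the proof of Lemma~\ref{lemma:G_tfinite}, with $\phi^{(1)}$ of compactly supported Fourier transform (contributing finite propagation) and $\widehat{\phi^{(2)}}$ a Schwartz function with Gaussian decay, which already implies the required exponential bound $|\chi_N(\xi)\hat\phi(\xi)|\leqslant C_1 e^{-C_2|\xi|}$.
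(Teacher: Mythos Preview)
Your approach is workable but diverges from the paper's, and there is one soft spot that needs more care.

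\textbf{Where the routes differ.} The paper does not apply Lemma~\ref{lemma:local_appro} to $\phi(x)=e^{2\pi iF(x)}-1$ at all. Instead it uses the identity $e^{2\pi i H(\widetilde D)}=1$ (valid because invertibility of $\widetilde D$ makes $H(\widetilde D)$ a genuine idempotent) to rewrite
\[
u_s=\exp\big(2\pi i\,(F-H)(s^{-1}\widetilde D)\big),
\]
and then applies Lemma~\ref{lemma:local_appro} to the \emph{exponent} $F-H=\tfrac12(G-\sgn)$. This function is much more tractable than your $\phi$: its Fourier transform is explicit and one sees immediately that away from the origin it has Gaussian decay. Once the exponent path lies in $\cL_{L}(\widetilde X)^\Gamma_K$ (for $s\geqslant s_0$) and in $\cB(\widetilde X)^\Gamma_K$ continuously (for $s\in[0,s_0]$, via Lemmas~\ref{lemma:G_tfinite} and~\ref{lemma:G-sgn}), membership of $u_s-1$ in $\cL_{L,0}(\widetilde X)^\Gamma_K$ follows simply because the exponential is continuous on a Banach algebra. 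Your direct route through $\phi$ is legitimate---$\phi$ is entire and the strip estimate you sketch does go through, so Theorem~\ref{thm:analytic-cont} applies---but it trades an elementary algebraic trick for a genuine complex-analytic computation, and your proposed ``alternative'' decomposition of $\phi$ in the style of Lemma~\ref{lemma:G_tfinite} is not really easier, since $\widehat\phi$ has no closed form.

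\textbf{The gap.} Your treatment of the interval $[0,s_0]$ is where the argument is incomplete. You write that one handles it ``by direct operator-norm continuity together with finiteness of $\|u_s-1\|_{1,K}$ for each fixed $s>0$''. Operator-norm continuity plus pointwise $\|\cdot\|_{1,K}$-finiteness does \emph{not} imply $\|\cdot\|_{1,K}$-continuity, and this is exactly the place where the large-spectral-gap hypothesis is used in an essential way. The relevant ingredient is Lemma~\ref{lemma:G-sgn} (not Lemma~\ref{lemma:G_tfinite}): it gives $\|G_t(\widetilde D)-\sgn(\widetilde D)\|_{1,K}\leqslant Ce^{-\varepsilon t^2}$, hence $F(s^{-1}\widetilde D)\to H(\widetilde D)$ in $\|\cdot\|_{1,K}$ as $s\to 0^+$, and then continuity of $\exp$ on the Banach algebra yields $u_s\to e^{2\pi iH(\widetilde D)}=1$ in $\|\cdot\|_{1,K}$. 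The same lemma (via the derivative bound $\|\tfrac{d}{dt}G_t(\widetilde D)\|_{1,K}\leqslant\tfrac{C_1}{t}e^{-\varepsilon t^2}$) also supplies $\|\cdot\|_{1,K}$-continuity on $(0,s_0]$. Once you invoke Lemma~\ref{lemma:G-sgn} here, your proof is complete; without it, the continuity at $s=0$ in the weighted norm is unjustified.
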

\begin{proof}
	Let $\sgn$ be the sign function and $H$ the Heaviside step function $ \frac{\sgn+1}{2}$. By the invertibility of $\widetilde D$, the element $H(\widetilde D)$ is an idempotent. Therefore, we have
	\[ \exp(2\pi i\cdot H(\widetilde D_X)) = 1. \] 
	Hence
	$$u_s
	=\exp(2\pi iF(s^{-1}\widetilde D)-2\pi iH(s^{-1}\widetilde D)).$$
	The function  
	$ 2\pi i(F-H)$ satisfies  the condition in Lemma \ref{lemma:local_appro}. It follows that there exists $s_0 \geqslant 0$ such that the path 
	\[ \{ 2\pi iF(s^{-1}\widetilde D_X)-2\pi iH(s^{-1}\widetilde D_X)\}_{s\in [s_0, \infty)} \]
	defines an element in $\cL_{ L}(\widetilde X)^\Gamma_K$.
	On the other hand, it follows from Lemma \ref{lemma:G_tfinite} and Lemma \ref{lemma:G-sgn} that $u_s$ is a continuous map from $[0,s_0]$ to $\cB(\widetilde X)^\Gamma_K$. Therefore the path 
	$\{u_{s}\}_{s\in [0, \infty)}$ defines an element in $(\cL_{L,0}(\widetilde X)^\Gamma_K)^+$. Similarly, the path
	\begin{equation}
	u_s^{-1}=\begin{cases}
	\exp(-2\pi i F(s^{-1}\widetilde D))&s>0,\\
	1&s=0,
	\end{cases}
	\end{equation}
	lies in $(\cL_{L,0}(\widetilde X)^\Gamma_K)^+$ and is the inverse of $\{u_{s}\}_{s\in [0, \infty)}$.
\end{proof}

Now we are ready to define the $l^1$-higher rho invariant. 
\begin{definition}\label{def:higherrho}
	Let $(X,g)$ be a closed spin Riemannian manifold and $\widetilde X$ the universal cover of $X$. Suppose the scalar curvature  of $g$ satisfies the inequality in line \eqref{eq:k>}. 
	\begin{enumerate}
		\item If $\dim X$ is odd, then we define the $l^1$-higher rho invariant $\rho_{1,K}(g)$ to be the class $[u]\in K_1(\cL_{L,0}(\widetilde X)^\Gamma_K)$, where $u = \{u_s\}_{s\in [0, \infty)}$.
		\item If $\dim X$ is even, then we define the $l^1$-higher rho invariant $\rho_{1,K}(g)$ to be the class $[ p]-[\begin{psmallmatrix}
		1&0\\0&0
		\end{psmallmatrix}]\in K_0(\cL_{L,0}(\widetilde X)^\Gamma_K)$, where $p = \{p_s\}_{s\in [0,\infty)}$ and 
		\begin{equation}\label{eq:p_s}
		p_s=\begin{cases}
		p_{G(s^{-1}\widetilde D)}&s>0,\\
		& \\
		\begin{psmallmatrix}
		1&0\\0&0
		\end{psmallmatrix}&s=0,
		\end{cases}
		\end{equation} 
		with the formula for $p_{G(s^{-1}\widetilde D)}$ given in line \eqref{eq:idem}. 
	\end{enumerate} 
\end{definition}

Observe that, if there exits a continuous path of metrics  each of which satisfies  the condition $\eqref{eq:k>}$, then these metrics define the same $l^1$-higher rho invariant. 

\subsection{A product formula for $l^1$-higher rho invariant}
In this subsection, we prove a product formula for the $l^1$-higher rho invariant. Our method of proof is inspired by Higson--Roe \cite{Higson1,Higson2} and Weinberger--Xie--Yu \cite{Weinberger:2016dq}.

As in the previous subsection, assume  that $(X,g)$ is a closed spin Riemannian manifold with scalar curvature function $\kappa(x)$ of $g$ satisfying that there exists $K\geqslant 0$ such that
$$
\kappa(x)>\frac{16(K_\Gamma+K)^2}{\tau^2}\textup{ for all } x\in X,
$$
 where $K_\Gamma$ and $\tau$ are given in line \eqref{eq:KGammaconvenience} and \eqref{eq:tau} respectively. 
We consider the Riemannian manifold $X\times \R$ with the product metric $dt^2+g$, whose scalar curvature is still uniformly bounded below by a positive number. Similar to Definition \ref{def:Roe}, Definition \ref{def:local,obst} and Definition \ref{def:higherrho}, we define the geometric Banach algebras
\footnote{Since the cocompactness fails for the $\Gamma$-action on $\widetilde X\times\R$, the Banach algebra $\cB(\widetilde X\times\R)^\Gamma_K$ is no longer isomorphic to $\cK\otimes l^1_K(\Gamma)$.}
 $\cB(\widetilde X\times\R)^\Gamma_K$, $\cB_L(\widetilde X\times\R)^\Gamma_K$ and $\cB_{L,0}(\widetilde X\times\R)^\Gamma_K$ as well as the higher rho invariant 
$$\rho_{1,K}(dt^2+g)\in K_*(\cB_{L,0}(\widetilde X\times\R)^\Gamma_K).$$
Note that, although the $\Gamma$-action on $\widetilde X\times\R$ is no longer cocompact, these definitions still make sense due to the following observations.
\begin{enumerate}
	\item The $l^1$-norms of the operators in  $\C(\widetilde X\times\R)^\Gamma$ are defined by using the fundamental domain $\mathcal F\times\R$, where $\mathcal F$ is a precompact fundamental domain for the $\Gamma$-action on $\widetilde X$, where $\Gamma = \pi_1(\widetilde X)$. Compare with Definition \ref{def:B^Gamma} and Definition \ref{def:konenorm}.
	\item With respect to the given length function $\ell$ on $\Gamma$, we still have 
	\begin{equation}
	\dist((x,t),(\gamma x,t'))>\tau\ell(\gamma)-C_0,~\forall\gamma\in \Gamma,~\forall x\in \mathcal F,~\forall t,t'\in\R
	\end{equation}
	with the same constants $C_0$ and $\tau$ as in line \eqref{eq:tau}.
\end{enumerate}

Let  $C^*_L(\R)$ be the $C^\ast$-algebraic  localization algebra of the metric space $\R$, and   $\ind_L(D_\R)\in K_1(C^*_L(\R))$ the local index class of the Dirac operator $D_\R=i\frac{d}{dx}$, cf. \cite{Yulocalization}. Then there is a natural product map
$$\cB_{L,0}(\widetilde X)^\Gamma_K\otimes_{\max{}} C^*_L(\R)\to \cB_{L,0}(\widetilde X\times\R)^\Gamma_K,$$
which induces a homomorphism\footnote{In fact, a standard Elienberg swindle argument shows that the map $$_-\otimes \ind_L(D_\R)\colon K_*(\cB_{L,0}(\widetilde X)^\Gamma_K)\to K_{*+1}(\cB_{L,0}(\widetilde X\times\R)^\Gamma_K)$$ is an isomorphism.} at the level of $K$-theory
\begin{equation}\label{eq:K-theoryprod}
_-\otimes \ind_L(D_\R)\colon K_*(\cB_{L,0}(\widetilde X)^\Gamma_K)\to K_{*+1}(\cB_{L,0}(\widetilde X\times\R)^\Gamma_K).
\end{equation}
\begin{theorem}\label{thm:product}
	Assume that $(X,g)$ is an $m$-dimensional closed spin Riemannian manifold with scalar curvature function $\kappa(x)$ of $g$ satisfying that there exists ${K\geqslant 0}$ such that
	$$
	\kappa(x)>\frac{16(K_\Gamma+K)}{\tau}\textup{ for all } x\in X.
	$$ Then under the product map in line \eqref{eq:K-theoryprod}, we have
	$$\rho_{1,K}(g)\otimes \ind_L(D_\R)=\rho_{1,K}(g+dt^2)\in K_{m+1}(\cB_{L,0}(\widetilde X\times\R)^\Gamma_K).$$
\end{theorem}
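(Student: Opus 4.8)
The plan is to reduce the product formula to a statement about the local index of the Dirac operator on $\R$ via an explicit homotopy, following the strategy of Higson--Roe and Weinberger--Xie--Yu, but carried out entirely within the weighted $l^1$-Banach-algebra framework developed above. The starting point is the observation that $\rho_{1,K}(g+dt^2)$ is built from the operator $F\big((s+s_0)^{-1}\widetilde D_{X\times\R}\big)$, where $\widetilde D_{X\times\R} = \widetilde D_X \hotimes 1 + 1 \hotimes D_\R$ is the Dirac operator on the product (graded tensor product of the two spinor bundles). The key algebraic input is a functional-calculus identity expressing a normalizing function of a sum of two anticommuting self-adjoint operators $\widetilde D_X \hotimes 1$ and $1 \hotimes D_\R$ in terms of normalizing functions of the two factors separately — concretely, the Cauchy-integral / rescaling trick $F\big(\widetilde D_X \hotimes 1 + 1\hotimes D_\R\big)$ is homotopic, as $r \to \infty$, to a "twisted sum" of $F(\widetilde D_X)$ and $F(1\hotimes D_\R)$ after rescaling one factor by $r$. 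This is exactly the kind of manipulation Lemma \ref{lemma:local_appro} and Theorem \ref{thm:analytic-cont} are designed to handle: since $F - H$ (and its two-variable analogue) extends analytically to a strip and decays in $L^1$ along horizontal lines, the relevant off-diagonal operator components decay exponentially fast, and the large-scalar-curvature hypothesis $\kappa > 16(K_\Gamma+K)^2/\tau^2$ guarantees this exponential decay beats the group-growth factor $e^{(K_\Gamma+K)\ell(\gamma)}$ in the weighted $l^1$-norm.

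First I would make precise the product map in line \eqref{eq:K-theoryprod}: represent $\ind_L(D_\R) \in K_1(C^*_L(\R))$ by the canonical path $v = \{v_r\}_{r \in [1,\infty)}$ with $v_r = \exp\big(2\pi i F(r^{-1} D_\R)\big)$, and check that the external product of the $l^1$-localization class $\rho_{1,K}(g)$ with this path lands in $\cB_{L,0}(\widetilde X \times \R)^\Gamma_K$ by verifying the propagation-decay and uniform-norm conditions of Definition \ref{def:local,obst} for the product — here the crucial point, already flagged in the footnote of the excerpt, is that the $\Gamma$-action on $\widetilde X\times\R$ is not cocompact, but the two observations listed there (the $l^1$-norm is defined via $\mathcal F \times \R$, and the distance estimate \eqref{eq:tau} survives with the same constants) make the $l^1$-estimates go through. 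Second, I would write down the explicit homotopy: parametrize by $u \in [0,1]$ a family of invertible elements (in the even case, idempotents) interpolating between a representative of $\rho_{1,K}(g)\otimes\ind_L(D_\R)$ and the representative of $\rho_{1,K}(g+dt^2)$ from Definition \ref{def:higherrho}, using the rotation-type homotopy $\cos(u\pi/2)\,(\widetilde D_X \hotimes 1) + \sin(u\pi/2)\cdot(\text{rescaled } 1\hotimes D_\R)$ inside the functional calculus, together with the standard Bott/rotation matrices relating the two pictures. Third, I would verify at each stage $u$ that the resulting path in $s$ (the localization parameter) and $r$ (the $\R$-rescaling parameter) still satisfies the defining conditions of $\cB_{L,0}(\widetilde X\times\R)^\Gamma_K$, invoking Lemma \ref{lemma:local_appro} with the two-variable analytic function $2\pi i(F - H)$ applied to the rotated operator, and using Lemma \ref{lemma:largespectralgap} / Lemma \ref{lemma:G-sgn} to control the behaviour as $s \to \infty$ (where the large scalar curvature forces convergence to $\sgn$ of the rotated Dirac operator, which squares to $1$).

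The main obstacle I anticipate is not the formal homotopy — that is essentially the Higson--Roe product argument — but rather the uniform $l^1$-norm bookkeeping throughout the two-parameter family. In the $C^*$-setting one gets the needed operator-norm bounds for free from functional calculus and positivity, but here one must show, uniformly in the homotopy parameter $u$, in the localization parameter $s$, and in the $\R$-rescaling $r$, that the off-diagonal matrix coefficients $T_\gamma = \psi\, T\, \gamma\psi$ of every operator appearing decay like $e^{-\varepsilon(\ell(\gamma)^2 + \cdots)}$ fast enough to be summable against $e^{(K_\Gamma+K)\ell(\gamma)}$. The rotated operator $\cos(u\pi/2)(\widetilde D_X\hotimes 1) + \sin(u\pi/2)(1\hotimes D_\R)$ has the awkward feature that its "$X$-part" spectral gap degenerates as $u \to 1$, so the clean hypothesis $\sigma(\widetilde D_X) > 2(K_\Gamma+K)/\tau$ is not directly available along the whole homotopy; one must instead exploit that the $\R$-direction is where the propagation shrinks and redo the Fourier estimate of Lemma \ref{lemma:Fourier} for the product operator, tracking how the constants depend on $u$. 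Making this uniform estimate — essentially a two-variable, homotopy-parametrized version of Lemma \ref{lemma:largespectralgap} — is the technical heart of the proof, and I expect it to occupy the bulk of the section; once it is in place, the identity $\rho_{1,K}(g)\otimes\ind_L(D_\R) = \rho_{1,K}(g+dt^2)$ follows by homotopy invariance of $K$-theory classes in the Banach algebra $\cB_{L,0}(\widetilde X\times\R)^\Gamma_K$.
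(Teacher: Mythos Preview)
Your proposal correctly identifies the main obstacle but does not resolve it, and that obstacle is fatal to the approach you sketch. The rotation homotopy $\cos(u\pi/2)(\widetilde D_X\hotimes 1) + \sin(u\pi/2)(1\hotimes D_\R)$ is exactly the kind of argument that works in the $C^*$-setting but breaks here: as you say, the $X$-part spectral gap collapses as $u\to 1$, and since the $\Gamma$-action is trivial in the $\R$-direction, propagation decay along $\R$ buys you nothing against the summation $\sum_\gamma e^{K\ell(\gamma)}\|T_\gamma\|_\op$. You cannot ``redo the Fourier estimate of Lemma \ref{lemma:Fourier}'' to save this, because that lemma feeds into Lemma \ref{lemma:largespectralgap} only through the spectral gap of the $\widetilde X$-operator, and with that gap gone you have no Gaussian decay to beat $e^{(K_\Gamma+K)\ell(\gamma)}$. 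The introduction of the paper in fact remarks that the existing $C^*$-product-formula proofs do not seem to generalize to the Banach setting; your proposal is essentially one of those proofs.

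The paper avoids the degeneration entirely by changing representatives before attempting any homotopy. First (Lemma \ref{lm:con-est}) it replaces the exponential representative $u_s=e^{2\pi iF(s^{-1}\widetilde D)}$ of $\rho_{1,K}(g)$ by the Cayley transform $v_s=(\widetilde D+si)(\widetilde D-si)^{-1}$, and in the even case (Lemma \ref{lm:projrepn}) by the difference of positive projections $[P_{s,+}]-[P_{s,-}]$ with $P_{s,\pm}=\varphi(\slashed D\pm sE)$; the latter is established via a Higson--Roe $\sqrt{S_1S_2}$ conjugation, with the needed spectral-radius bound coming from the uniform resolvent estimate $\|(s^{-2}\slashed D^2+1)^{-b}\|_{1,K}\leqslant C$. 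Then, writing $\rho_{1,K}(g+dt^2)$ as the Cayley transform of $1\otimes iD_s+(s^{-1}\slashed D\pm E)\otimes 1$, the paper runs the homotopy
\[
r\longmapsto 1-\frac{2E(s^{-2}\slashed D^2+1)^{-r/2}\otimes 1}{1\otimes iD_s+(s^{-1}\slashed D+E)(s^{-2}\slashed D^2+1)^{-r/2}\otimes 1},\qquad r\in[0,1].
\]
Because $(s^{-1}\slashed D\pm E)^2=s^{-2}\slashed D^2+1\geqslant 1$ for every $s$, the gap never degenerates along this path, and all $l^1$-estimates reduce to the single resolvent bound above. At $r=1$ the numerator and denominator become $P_{s,\pm}-Q_{s,\pm}$, and the resulting expression factors \emph{algebraically} as a product of $(P_{s,-}\otimes\frac{iD_s+1}{iD_s-1}+(1-P_{s,-})\otimes 1)$ and $(P_{s,+}\otimes\frac{iD_s-1}{iD_s+1}+(1-P_{s,+})\otimes 1)$, which is exactly $([P_{s,+}]-[P_{s,-}])\otimes[(D_s+i)(D_s-i)^{-1}]$. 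The odd-dimensional case is then bootstrapped from the even one via $X\times\R^2$. No rotation, no two-variable Fourier estimate, and the large-scalar-curvature hypothesis is used only through the resolvent bound, which is uniform in the homotopy parameter by construction.
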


Before we prove the theorem, let us introduce a  different (but equivalent) representative  of the $l^1$-higher rho invariant $\rho_{1,K}(g)$. First, we assume that $\dim X$ is odd. 
Set
\begin{equation}\label{eq:v_s}
v_s=\begin{cases}
(\widetilde D+si)(\widetilde D-si)^{-1} & \textup{ if } s>0,\\ 
1 & \textup{ if } s=0.\\
\end{cases}
\end{equation}
\begin{lemma}\label{lm:con-est}
	For any $K\geqslant 0$, if $\dim X$ is odd and the scalar curvature $\kappa$ on $X$ satisfies that
	$$\inf_{x\in X}\kappa(x)>\frac{16(K_\Gamma+K)^2}{\tau^2},$$
	then $ v = \{v_s\}_{s\in[0,\infty)}$ in line \eqref{eq:v_s} is an invertible element in 
	$(\cB_{L,0}(\widetilde X)^\Gamma_{K})^+$. Furthermore, we have
	$$[v]=\rho_{1,K}(g)\in K_1(\cB_{L,0}(\widetilde X)^\Gamma_K).$$
\end{lemma}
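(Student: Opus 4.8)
The plan is to show $v = \{v_s\}$ is a well-defined invertible element of $(\cB_{L,0}(\widetilde X)^\Gamma_K)^+$ and then exhibit an explicit homotopy (in $K$-theory) between $v$ and the representative $u = \{u_s\}$ from Definition \ref{def:higherrho}. First I would address well-definedness. Write $v_s - 1 = (\widetilde D+si)(\widetilde D-si)^{-1} - 1 = 2si(\widetilde D - si)^{-1} = f_s(\widetilde D)$, where $f(x) = 2i(x-i)^{-1}$ and $f_s(x) = f(x/s)$. This function $f$ extends analytically to the strip $\{|\operatorname{Im} z| < 1\}$ away from the pole at $z=i$; more to the point, since $\widetilde D$ is invertible with spectral gap $\sigma > \frac{2(K_\Gamma+K)}{\tau}$ by the Lichnerowicz estimate, the functional calculus only sees the behaviour of $f$ on $(-\infty,-\sigma]\cup[\sigma,\infty)$, where we may replace $f$ by a $C_0(\R)$ function with an analytic continuation to a strip of width comparable to $\sigma$ and with the requisite $L^1$-bounds on horizontal lines. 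Then Theorem \ref{thm:analytic-cont} gives $|\widehat{f}(\xi)| \leqslant Ce^{-a|\xi|/3}$, so the hypothesis of Lemma \ref{lemma:local_appro} is met (after multiplying by the cutoff $\chi_N$, whose effect is harmless since $\widehat f$ is already exponentially small). Lemma \ref{lemma:local_appro} then yields $s_0 \geqslant 0$ such that $s \mapsto f(\widetilde D/(s+s_0))$ lies in $\cB_L(\widetilde X)^\Gamma_K$; combined with continuity of $v_s$ on the compact interval $[0,s_0]$ with respect to $\|\cdot\|_{1,K}$ (again via the analytic-continuation estimate, uniformly in $s$ bounded away from $0$, together with $v_s \to 1$ in norm as $s \to 0^+$ since $\|(\widetilde D - si)^{-1}\| \leqslant 1/\sigma$ forces $\|2si(\widetilde D-si)^{-1}\|_{1,K}\to 0$), we conclude $v - 1 \in \cB_{L,0}(\widetilde X)^\Gamma_K$. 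Invertibility is immediate with inverse $v_s^{-1} = (\widetilde D - si)(\widetilde D + si)^{-1}$, handled the same way.

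For the $K$-theory identity, I would construct a norm-continuous path of invertibles in $(\cB_{L,0}(\widetilde X)^\Gamma_K)^+$ joining $v$ to $u$. The key observation is that both are built from functions of $\widetilde D$ that, after the rescaling $\widetilde D \mapsto s^{-1}\widetilde D$, interpolate between the identity and (a unitary version of) $\operatorname{sgn}(\widetilde D)$. Concretely, $v_s = \phi(s^{-1}\widetilde D)$ with $\phi(x) = (x+i)(x-i)^{-1} = e^{2\pi i \cdot (\frac{1}{2} - \frac{1}{\pi}\arctan x)}$, i.e. $v_s = \exp(2\pi i\, G_1(s^{-1}\widetilde D))$ where $G_1(x) = \frac{1}{2} - \frac{1}{\pi}\arctan x$ is a normalizing-type function; meanwhile $u_s = \exp(2\pi i\, F(s^{-1}\widetilde D))$ with $F = \frac{G+1}{2}$ where $G$ is the Gaussian-error normalizing function from \eqref{eq:G(x)}. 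The straight-line homotopy $F_r = (1-r)G_1 + r(-F)$... more carefully, I would interpolate between the two normalizing functions $x \mapsto \tfrac12 - \tfrac1\pi\arctan x$ and $x \mapsto \tfrac12(1 - G(x))$ linearly in $r \in [0,1]$; each $F_r$ is a function that is $C_0$ after subtracting $H$ (the Heaviside function $\frac{\operatorname{sgn}+1}{2}$), satisfies a uniform analytic-continuation bound of the type in Theorem \ref{thm:analytic-cont} (the $\arctan$ has poles at $\pm i$, the Gaussian-error function is entire, so the convex combination is analytic in a strip of width $1-\epsilon$ uniformly in $r$), and hence by Lemma \ref{lemma:local_appro} the path $s \mapsto \exp(2\pi i(F_r(s^{-1}\widetilde D) - H(s^{-1}\widetilde D)))$ defines an element of $(\cB_{L,0}(\widetilde X)^\Gamma_K)^+$ depending norm-continuously on $r$ (uniformity in $r$ of the constants $N, C_1, C_2$ in Lemma \ref{lemma:local_appro} is what makes the path continuous). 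Since $\exp(2\pi i H(s^{-1}\widetilde D)) = 1$ by invertibility of $\widetilde D$ (as in the proof of Lemma \ref{lemma:oddhighrho}), this is a homotopy through invertibles from $v$ to $u$, giving $[v] = [u] = \rho_{1,K}(g)$ in $K_1$.

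The main obstacle I anticipate is the uniformity needed in the homotopy step: I must check that the analytic-continuation data (strip width $a$, $L^1$-bound $M$ on horizontal lines) and consequently the constants $(N, C_1, C_2)$ produced by Lemma \ref{lemma:local_appro} can be chosen \emph{independently of the homotopy parameter} $r$, so that Lemma \ref{lemma:local_appro} delivers a single $s_0$ and a uniform norm bound valid along the whole path — only then is $r \mapsto h_r$ genuinely continuous into $\cB_{L,0}(\widetilde X)^\Gamma_K$ rather than just pointwise defined. The pole of $\arctan$ at $\pm i$ limits the strip to width strictly less than $1$, but that is a fixed positive width, and the convex combination with an entire function does not make it worse; the $L^1$-norms on lines $\operatorname{Im} z = b$ with $|b| < 2a/3$ are bounded uniformly in $r$ by the triangle inequality from the two endpoint functions, which are each in $L^1$ on such lines. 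One should also double-check the behaviour near $s = 0$: the cut-off argument in Lemma \ref{lemma:local_appro} only controls large $s$, and on $[0, s_0]$ one needs the finiteness of $\|\exp(2\pi i F_r(s^{-1}\widetilde D))\|_{1,K}$ and its continuity, which follows as in Lemma \ref{lemma:G_tfinite} and Lemma \ref{lemma:G-sgn} — but one must verify those lemmas' arguments go through uniformly for the family $F_r$, not just for $G$ itself. I would also remark that the odd-dimensional case is the one stated; the even-dimensional analogue (replacing $v_s$ by the idempotent built from $\widetilde D(\widetilde D^2 + s^2)^{-1/2}$ or the Cayley-type formula) is handled identically and I would note this parenthetically.
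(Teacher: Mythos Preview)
Your overall strategy matches the paper's: establish $v\in(\cB_{L,0}(\widetilde X)^\Gamma_K)^+$ and then interpolate in the exponent to reach $u$. There are, however, two genuine gaps.

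First, the small-$s$ argument is incomplete. The bound $\|(\widetilde D-si)^{-1}\|_{\op}\le 1/\sigma$ controls only the operator norm, not the weighted $l^1$-norm $\|\cdot\|_{1,K}$, so it does not by itself force $\|v_s-1\|_{1,K}\to 0$. The paper supplies the missing input by constructing $(\widetilde D^2+s^2)^{-1}$ and $\widetilde D^{-1}$ explicitly in $\cB(\widetilde X)^\Gamma_K$ via the heat-kernel integrals $\int_0^\infty e^{-y(s^2\widetilde D^2+1)}\,dy$ and $\int_{-\infty}^{\infty}e^{-s^2\widetilde D^2}\,ds$, invoking Lemma~\ref{lemma:largespectralgap} for convergence; from this one deduces that $s\mapsto v_s$ is $\|\cdot\|_{1,K}$-continuous on any compact interval $[0,T]$.

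Second, and more seriously, your homotopy does not land at $u$. Your $G_1(x)=\tfrac12-\tfrac1\pi\arctan x$ runs from $1$ to $0$, while $F=\tfrac{G+1}{2}$ runs from $0$ to $1$; the convex combination $F_r=(1-r)G_1+r\cdot\tfrac12(1-G)$ therefore runs from $1$ to $0$ for every $r$, so $F_r-H$ runs from $1$ to $-1$ and is \emph{not} $C_0$. If you repair this by subtracting $1-H$ instead (which does make $F_r-(1-H)$ a $C_0$ function), the endpoint at $r=1$ becomes $e^{2\pi i(H-F)(s^{-1}\widetilde D)}=u_s^{-1}$, yielding $[v]=[u^{-1}]=-\rho_{1,K}(g)$ with the wrong sign. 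The paper avoids this by not forcing $v_s$ and $u_s$ into the form $e^{2\pi i\Phi}$ with a common asymptotic profile: it writes $v_s=e^{a_s}$ with $a_s=\int_0^s\frac{2i\widetilde D}{\widetilde D^2+t^2}\,dt$, which as a function of $y=s^{-1}\widetilde D$ equals $i\pi\,\mathrm{sgn}(y)-2i\arctan y$ and is already $C_0$, and interpolates this linearly with $2\pi i(F-H)(y)$. The point is that these two $C_0$ logarithms come with \emph{different} Heaviside corrections ($1-H$ versus $H$), and the linear interpolation of the $C_0$ exponents themselves is what produces a homotopy ending at $u_s$ rather than $u_s^{-1}$. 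The uniformity-in-$r$ issue you flagged is secondary to this orientation error.
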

\begin{proof}
	By Lemma \ref{lemma:largespectralgap},  we have
	$$\|e^{-y(s^2\widetilde D^2+1)}\|_{1,K}\leqslant Ce^{-y(\varepsilon s^2+1)}$$
	for some $C>0$ and $\varepsilon>0$.	It follows that the integral 
	\begin{equation}\label{eq:1/D^2+1}
	\frac{1}{s^2\widetilde D^2+1}  = -\int_0^\infty e^{-y(s^2\widetilde D^2+1)}dy
	\end{equation}
	absolutely converges with respect to the $\|\cdot\|_{1,K}$-norm, hence 
	\[ \frac{1}{s^2\widetilde D^2+1}\in \cL(\widetilde X)^\Gamma_{K} \]  and 
	\begin{equation}
	\Big\|\frac{1}{s^2\widetilde D^2+1}\Big\|_{1,K}\leqslant \frac{C}{s^2\varepsilon^2+1}
	\end{equation}
	for all $s>0$. Similarly, the integral 
	\begin{equation}
	\frac{1}{|\widetilde D|}=\frac{1}{\sqrt\pi}\int_{-\infty}^{+\infty}e^{-s^2\widetilde D^2}ds
	\end{equation}
	absolutely converges with respect to the $\|\cdot\|_{1,K}$-norm. It follows that 
	$$\frac{1}{|\widetilde D|}\in\cL(\widetilde X)^\Gamma_{K},$$ which implies that 
	$$\frac{1}{\widetilde D}=\frac{\sgn(\widetilde D)}{|\widetilde D|}\in\cL(\widetilde X)^\Gamma_{K}$$
	since by Lemma \ref{lemma:G-sgn} we have $\sgn(\widetilde D)\in \cL(\widetilde X)^\Gamma_{K}$.
	
	Let $\sigma(\widetilde D^{-1})$ be the spectrum of $\widetilde D^{-1}$ in $(\cL(\widetilde X)^\Gamma_{K})^+$. 
	For each $s\in \mathbb R\backslash\{0\}$, the element
	\[ (\widetilde D^{-1} - si)(\widetilde D^{-1} + si) = \frac{1 + s^2 \widetilde D^2}{\widetilde D^{2}} \]
	is invertible in $(\cL(\widetilde X)^\Gamma_{K})^+$, with its inverse given by 
	\[ \frac{\widetilde D^{2}}{1 + s^2 \widetilde D^2} = \frac{1}{s^2}\Big(1-\frac{1}{1 + s^2\widetilde D^2}\Big) \in (\cL(\widetilde X)^\Gamma_{K})^+. \]
	It follows that 
	$ (\widetilde D^{-1} - si) $
	is invertible in $(\cL(\widetilde X)^\Gamma_{K})^+$ for each $s\in \mathbb R\backslash\{0\}$. 
	Equivalently,  $si \notin \sigma(\widetilde D^{-1})$ for all $s\in \mathbb R\backslash\{0\}$. 
	
	Therefore, for any $s\in \mathbb R\backslash\{0\}$, the Cayley transform
	$$\frac{\widetilde D +si}{\widetilde D -si}
	=1+\frac{2si}{\widetilde D -si}$$
	is an invertible element in $(\cL( \widetilde X)^\Gamma_{K})^+$.
	Furthermore, we have  
	$$\lim_{s\to 0}\Big\|\frac{\widetilde D +si}{\widetilde D -si}-1\Big\|_{1,K}=0, $$
	Thus for any $T>0$, the map $s\mapsto v_s$ is a continuous map from $[0, T]$ to $ (\cL(\widetilde X)^\Gamma_{K})^+$.
	
	Set $f(x)=\frac{1}{x-i}$. We have 
	$$\hat{f}(\xi)=-ie^{-\xi }\chi_{(0, \infty)}(\xi),$$
	where $\chi_{(0,\infty)}$ is the characteristic function of the interval $(0, \infty)$. 
	Note that $\hat f$  decays exponentially as $|\xi|\to\infty$. 
	Therefore by Lemma \ref{lemma:local_appro}, there exists some $s_0 >0$ such that the path  $s\mapsto f_{s}(\widetilde D)$ with $s\in[s_0,\infty)$ is an element of $\cL_L(\widetilde X)^\Gamma_{K}$. To summarize, we have proved that  the path
	$\{v_s\}_{s\in[0,\infty)}$
	is an invertible element in $(\cL_{L,0}(\widetilde X)^\Gamma_{K})^+$.
	
	Recall that the $l^1$-higher rho invariant  $\rho_{1,K}(\widetilde D)$ is defined to be $K$-theory class represented by $u = \{u_s\}_{s\in[0,\infty)}\in (\cB_{L,0}(\widetilde X)^\Gamma_{K})^+$ in line \eqref{eq:u_s}. We shall   show that 
	\begin{align*}
	[v]=[u]
	\in K_1(\cB_{L,0}(\widetilde X)^\Gamma_{K}).
	\end{align*}
	Note that for any fixed $s>0$, we have the following identity  in $(C^*(\widetilde X)^\Gamma)^+$:
	\begin{align*}
	v_s&=\exp\Big(\int_0^s
	\frac{2i\widetilde D}{\widetilde D^2+t^2}dt
	\Big)\\
	&=\exp\big(i\arctan(s^{-1}\widetilde D)-\pi i \sgn(s^{-1}\widetilde D)\big).
	\end{align*}
	
	Set 
	$$a_s=\int_0^s
	\frac{2i\widetilde D}{\widetilde D^2+t^2}dt=i\arctan(s^{-1}\widetilde D)-\pi i\sgn(s^{-1}\widetilde D).$$
	By the discussion above, we see that 
	\[ t \mapsto \frac{2i\widetilde D}{\widetilde D^2+t^2}\] 	
	is a continuous path from $[0, s]$ to $\cB(\widetilde X)^\Gamma_{K}$. It follows the integral 
	\[ \int_0^s	\frac{2i\widetilde D}{\widetilde D^2+t^2}dt \]
	absolutely	converges  in $\cB(\widetilde X)^\Gamma_{K}$. Therefore for any $T>0$, the map 
	\[ a\colon [0,T]\to \cB(\widetilde X)^\Gamma_{K} \textup{ by } a(s) = a_s\] is a continuous map. On the other hand, the function 
	\[ i\arctan(x)-\pi i\sgn(x) \] satisfies the assumption of Lemma \ref{lemma:local_appro}, which in particular implies  there exists $s_0'>0$ such that $\{a_{s}\}_{s\in[s_0',\infty)}\in \cB_L(\widetilde X)^\Gamma_{K}$. Therefore, the path $\{a_s\}_{s\in[0,\infty)}$ is an element of $\cB_{L,0}(\widetilde X)^\Gamma_{K}$.
	
	Recall that 
	\[ u_s
	=\exp(2\pi iF(s^{-1}\widetilde D_X)-2\pi iH(s^{-1}\widetilde D_X)), \]
	cf. the proof of Lemma $\ref{lemma:oddhighrho}$. 	Now the following homotopy of invertible elements 
	\begin{equation}	\label{eq:path}
	\lambda\mapsto 
	\exp\Big(\lambda a_s+(1-\lambda)\big(2\pi iF(s^{-1}\widetilde D)-2\pi iH(\widetilde D)\big) \Big),~\lambda\in[0,1]
	\end{equation}
	connects $v = \{v_{s}\}_{s\in[0,\infty)}$ and 
	$u = \{u_{s}\}_{s\in[0,\infty)}$ in $(\cB_{L,0}(\widetilde X)^\Gamma_{K})^+$. This finishes the proof. 
\end{proof}

To prove the product formula for $l^1$-higher rho invariants, we shall give an alternative but equivalent description of $l^1$-higher rho invariants in terms of Clifford-linear  Dirac operators. Recall the $\cl_{n}$-Dirac bundle  $\mathfrak S(X)$ over $X$: 
\begin{equation}\label{eq:cliffbdle}
	\mathfrak S(X) = P_{\spin}(X) \times_{\lambda} \cl_{n}
\end{equation} 
where $\lambda\colon \spin_n \to \End(\cl_n)$ is the representation given by left multiplication. Equip $\mathfrak S(X)$ with the canonical Riemannian connection  determined by the presentation $\lambda \colon P_\spin(X) \to \End(\cl_n)$. Lift all geometric data to $\widetilde X$ and denote by $\mathfrak S(\widetilde X)$ the associated the $\cl_{n}$-Dirac bundle on $\widetilde X$.  The decomposition $\cl_n = \cl_n^0 \oplus \cl_n^1$ gives rise to a parallel decomposition 
\[ \mathfrak S(\widetilde X) = \mathfrak S(\widetilde X)^0 \oplus \mathfrak S(\widetilde X)^1.  \]
Consider the (Clifford) volume element
$$\omega=i^{\frac{\dim X+1}{2}}e_1e_2\cdots e_n,$$
where $\{e_1,\cdots,e_n\}$ is an oriented local orthonormal frame of the tangent space. Let us denote by $E$ the Clifford multiplication by $\omega$.

 The Clifford-linear Dirac operator $\slashed D$ on $\widetilde X$ is defined to be the $\mathbb Z_2$-graded Dirac operator acting on $\mathfrak S(\widetilde X)$.  In fact, when $\dim X$ is odd,   it is equivalent to consider the following (ungraded) Dirac-type operator 
 \[ \slashed D E\colon C_c^\infty(\widetilde X, \mathfrak S(\widetilde X)^0) \to C_c^\infty(\widetilde X, \mathfrak S(\widetilde X)^0) \]
Note that $\slashed DE$ is self-adjoint in this case, since $\slashed D$ and $E$ commute when $\dim X$ is odd. With the natural identification $\cl_{n}^0 \cong \cl_{n-1}$,  the operator $\slashed DE$ becomes  $\cl_{n-1}$-linear Dirac operator. From now on, if $\dim X$ is odd, we shall write $\slashed D$ to denote the operator
\[ \slashed D \colon C_c^\infty(\widetilde X, \mathfrak S(\widetilde X)^0) \to C_c^\infty(\widetilde X, \mathfrak S(\widetilde X)^1),  \]
unless otherwise specified.

The $l^1$-higher rho invariant of the spin Dirac operator $\widetilde D$ on $\widetilde X$ coincides with the $l^1$-higher rho invariant of the Clifford-linear Dirac operator $\slashed D$. If $\dim X$ is odd,  it follows from Lemma \ref{lm:con-est} that the $l^1$-higher rho invariant of $\slashed D$ can be represented by 
$$\big[\{ (\slashed DE+si )(\slashed DE-s i)^{-1}\}_{s\in [0, \infty)}\big] =  \big[\{ (\slashed D+si E)(\slashed D-s iE)^{-1}\}_{s\in [0, \infty)}\big]$$
in $K_1(\cB_{L,0}(\widetilde X)^\Gamma_K).$ An alternative description of the $l^1$-higher rho invariant for the eve dimensional case is given in Lemma \ref{lm:projrepn} below. 



\begin{definition}
	Let $\cD(\widetilde X)^\Gamma_K$ be the $\|\cdot\|_{1,K}$-completion of operators in $\cB(H)^\Gamma$ with finite propagation and \emph{pseudo-local}, i.e., $\chi_A \circ T-T\circ \chi_A$ is compact for any precompact Borel set $A$. Similarly, we also  define  $\cD_L(\widetilde X)^\Gamma_K$ and $\cD_{L,0}(\widetilde X)^\Gamma_K$, which are the obvious analogues of  $\cB_L(\widetilde X)^\Gamma_K$ and $\cB_{L,0}(\widetilde X)^\Gamma_K$. 
\end{definition}
Obviously, $\cB(\widetilde X)^\Gamma_K$ (resp. $\cB_{L,0}(\widetilde X)^\Gamma_K$) is a closed two-sided ideal of $\cD(\widetilde X)^\Gamma_K$ (resp. $\cD_{L,0}(\widetilde X)^\Gamma_K$).
\begin{lemma}\label{lm:projrepn}
	For a given $K\geqslant 0$, suppose $\dim X$ is even and the scalar curvature $\kappa$ on $X$ satisfies that
	$$\inf_{x\in X}\kappa(x)>\frac{16(K_\Gamma+K)^2}{\tau^2}.$$ Let us denote 
	\[ P_{s, +} \coloneqq \varphi (\slashed D+sE) \textup{ and } P_{s, -} \coloneqq \varphi(\slashed D -sE)\]  
	where
	$$\varphi(x)=\begin{cases}
	1,&x>0,\\0,&x\leqslant 0.
	\end{cases}$$
	Then the elements $\{P_{s, +}\}_{s\in [0, \infty)}$ and $\{P_{s, -}\}_{s\in [0, \infty)}$
	lie in 
	$(\cD_L(\widetilde X)^\Gamma_K)^+$, and the element $\{P_{s, +} - P_{s, -}\}_{s\in [0, \infty)}$
	lies in $\cB_{L,0}(\widetilde X)^\Gamma_K$, Furthermore, we have
	\[\rho_{1,K}(g) = [P_{s, +}]-[P_{s, -}] \] 
	in $K_0(\cB_{L,0}(\widetilde X)^\Gamma_K)$.
\end{lemma}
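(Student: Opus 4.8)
The plan is to establish the membership statements by functional calculus together with the Gaussian estimates already developed, and then to identify the resulting relative $K$-theory class with $\rho_{1,K}(g)$ by a homotopy argument paralleling the odd-dimensional Lemma~\ref{lm:con-est}.

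Since $\dim X$ is even, the Clifford volume operator $E$ is a self-adjoint involution that \emph{anticommutes} with $\slashed D$; hence $(\slashed D \pm sE)^2 = \slashed D^2 + s^2$, so $\slashed D \pm sE$ is a first-order self-adjoint elliptic operator whose spectrum avoids $\bigl(-\sqrt{\sigma^2+s^2},\,\sqrt{\sigma^2+s^2}\bigr)$, where $\sigma$ is the infimum of the spectrum of $|\slashed D|$. By the Lichnerowicz formula and the hypothesis on $\kappa$, one has $\sigma > \tfrac{2(K_\Gamma+K)}{\tau}$. Consequently $\varphi(\slashed D \pm sE) = \tfrac12\bigl(1 + (\slashed D\pm sE)(\slashed D^2+s^2)^{-1/2}\bigr)$ is a genuine spectral projection for all $s \geqslant 0$. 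First I would verify that $\{P_{s,\pm}\}$ defines an element of $(\cD_L(\widetilde X)^\Gamma_K)^+$: pseudo-locality of $P_{s,\pm}$ is the standard consequence of functional calculus of an elliptic operator, while finiteness of the weighted $l^1$-norm, uniform bounds, uniform continuity in $s$, and decay of propagation as $s \to \infty$ follow from Lemma~\ref{lemma:Fourier}, Lemma~\ref{lemma:largespectralgap} and Lemma~\ref{lemma:local_appro} applied to $\slashed D$ together with the bounded perturbation $sE$ --- using, for instance, the heat representation $(\slashed D^2+s^2)^{-1/2} = \pi^{-1/2}\int_0^\infty u^{-1/2}e^{-u(\slashed D^2+s^2)}\,du$. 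Next I would observe that $P_{s,+} - P_{s,-} = sE(\slashed D^2+s^2)^{-1/2}$ is locally compact (the heat operator is smoothing), vanishes at $s = 0$, and has propagation tending to $0$ as $s \to \infty$, so $\{P_{s,+} - P_{s,-}\} \in \cB_{L,0}(\widetilde X)^\Gamma_K$; since in addition $P_{s,+} \equiv P_{s,-}$ modulo $\cB_{L,0}(\widetilde X)^\Gamma_K$, the formal difference $[P_{s,+}] - [P_{s,-}]$ is a well-defined class in $K_0(\cB_{L,0}(\widetilde X)^\Gamma_K)$.

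For the identity $[P_{s,+}] - [P_{s,-}] = \rho_{1,K}(g)$ I would argue as follows. Using that the $l^1$-higher rho invariant of the spin Dirac operator $\widetilde D$ agrees with that of the Clifford-linear $\slashed D$, and that (among normalizing functions whose Fourier transforms decay suitably) two choices yield homotopic paths in $\cB_{L,0}(\widetilde X)^\Gamma_K$ once \eqref{eq:k>} holds --- because all the relevant $\|\cdot\|_{1,K}$-estimates persist along a homotopy of normalizing functions --- it is enough to produce one homotopy of pairs of idempotents in $M_\bullet\bigl((\cD_L(\widetilde X)^\Gamma_K)^+\bigr)$, with differences confined to $M_\bullet(\cB_{L,0}(\widetilde X)^\Gamma_K)$, connecting $\bigl(\{P_{s,+}\},\,\{P_{s,-}\}\bigr)$ to $\bigl(\{p_{G(s^{-1}\slashed D)}\},\,\{\begin{psmallmatrix}1&0\\0&0\end{psmallmatrix}\}\bigr)$. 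At the level of bounded operators this is the familiar fact that the difference of spectral projections $\varphi(\slashed D\pm sE)$ and the index idempotent built from a normalizing function of $\slashed D$ represent the same higher index class, the passage from the normalizing function to the mass term $sE$ being a concrete incarnation of Clifford periodicity. I would build the homotopy from the normalizing-function family together with the straight-line path in the mass parameter $s$, exactly in the spirit of the homotopy $\lambda \mapsto \exp\bigl(\lambda a_s + (1-\lambda)(2\pi i F(s^{-1}\slashed D) - 2\pi i H(s^{-1}\slashed D))\bigr)$ used in Lemma~\ref{lm:con-est}, and then check that every operator encountered has finite weighted $l^1$-norm and that the homotopy is $\|\cdot\|_{1,K}$-continuous and localizes; this is where \eqref{eq:k>}, via the Lichnerowicz bound and the Gaussian estimates, is used.

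The bounded-operator index theory here is routine: all three presentations of the even-dimensional higher index coincide, and in the $C^\ast$-algebra setting this lemma is immediate. The main obstacle is the Banach-algebraic bookkeeping --- keeping every operator appearing in the membership argument and in the homotopy inside the weighted $l^1$-completions (finite $\|\cdot\|_{1,K}$-norm, uniform continuity, propagation tending to $0$), which is possible precisely because of the large-scalar-curvature hypothesis --- together with ensuring that the relative $K$-theory formalism (equivalently, excision for the ideal $\cB_{L,0}(\widetilde X)^\Gamma_K$ in $\cD_L(\widetilde X)^\Gamma_K$) is available in this setting. A minor additional nuisance is that $\slashed D \pm sE$ has infinite propagation, so throughout one must work with finite-propagation approximants whose propagation is controlled by $1/s$.
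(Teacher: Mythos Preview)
Your membership arguments are fine and close to the paper's: you correctly identify $P_{s,\pm}=\tfrac12\bigl(1+(\slashed D\pm sE)(\slashed D^2+s^2)^{-1/2}\bigr)$, and the heat-kernel/Gaussian estimates you invoke do control the weighted $l^1$-norms. Your explicit formula $P_{s,+}-P_{s,-}=sE(\slashed D^2+s^2)^{-1/2}$ is in fact cleaner than the paper's citation for the difference lying in $\cB_{L,0}$.

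The genuine gap is in the identification $[P_{s,+}]-[P_{s,-}]=\rho_{1,K}(g)$. You propose to ``build the homotopy from the normalizing-function family together with the straight-line path in the mass parameter, exactly in the spirit of'' the odd-dimensional Lemma~\ref{lm:con-est}. But that analogy does not carry over: in the odd case both $u_s$ and $v_s$ are invertibles obtained by functional calculus of the \emph{same} operator $\widetilde D$, and are connected through their logarithms. Here $p_s$ is the index idempotent built from $\slashed D$ alone (via the $U,V$ formalism), while $P_{s,\pm}$ are spectral projections of the \emph{perturbed} operators $\slashed D\pm sE$; no straight-line or normalizing-function homotopy connects these pairs in any evident way, and you have not written one down. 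Dismissing this as ``routine bounded-operator index theory'' is also not justified: even in the $C^*$-setting the equality of these two presentations requires an argument.

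The paper's key idea, which your proposal misses, is algebraic rather than analytic. Set $S_1=E$ and $S_2=\{\sgn(\slashed D+sE)\}_s$ in $\cD_L(\widetilde X)^\Gamma_K$. An explicit computation with the normalizing function $\chi_s(x)=x/\sqrt{x^2+s^2}$ shows $\rho_{1,K}(g)=\bigl[\tfrac{-S_2S_1S_2+1}{2}\bigr]-\bigl[\tfrac{S_1+1}{2}\bigr]$, while anticommutation of $\slashed D$ and $E$ gives $[P_{s,+}]-[P_{s,-}]=\bigl[\tfrac{-S_1S_2S_1+1}{2}\bigr]-\bigl[\tfrac{S_2+1}{2}\bigr]$. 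One then conjugates by $W=VS_2$ with $V=\sqrt{S_1S_2}$, which swaps $S_1\leftrightarrow S_2$. The existence of this square root in the \emph{Banach} algebra is itself non-trivial: one must show the spectrum of $S_1S_2$ avoids $(-\infty,0]$, and this uses a spectral-radius argument based on the estimate $\|(s^{-2}\slashed D^2+1)^{-b}\|_{1,K}\leqslant C$ uniformly in $s$ and $b$. This Higson--Roe swap, adapted to the $l^1_K$-setting, is the substance of the proof; the ``Banach-algebraic bookkeeping'' you flag as the main obstacle is secondary.
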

\begin{proof}
	Note that $P_{s, +} = \varphi(\slashed D+sE)=\varphi(s^{-1}\slashed D+E)$ for any $s>0$. Recall the function $G$ in line \eqref{eq:G(x)} given by 
	$$G(x)=\frac{2}{\sqrt\pi}\int_{-\infty}^x e^{-y^2}dy-1.$$
	By Lemma \ref{lemma:G_tfinite}, Lemma \ref{lemma:G-sgn} and Lemma \ref{lm:con-est}, we have for any $r\geqslant 1$, the element
	$\{ G(r(s^{-1}\slashed D+E))\}_{s\in [0, \infty)}$ lies in $\cD_L(\widetilde X)^\Gamma_K,$
	and
	\begin{align*}
	&\|G(r(s^{-1}\slashed D+E))-\sgn(s^{-1}\slashed D+E)\|_{1,K}\\
	\leqslant& \int_r^\infty \|(s^{-1}\slashed D+E)e^{-s^{-1}t^2\slashed D^2-t^2}\|dt\\
	=&\int_r^\infty \frac{1}{t}C_1e^{-\frac{t^2\varepsilon}{s}}e^{-t^2}dt
	\leqslant  C_2\int_r^\infty e^{-t^2}dt,
	\end{align*}
	where $\sgn$ is the sign function and $C_1,C_2,\varepsilon$ are positive constants independent of $r$ and $s$. By varying $r$, we see that the element $\{\sgn(s^{-1}\slashed D+E)\}_{s\in [0, \infty)}$ lies in 
	$(\cD_L(\widetilde X)^\Gamma_K)^+.$ Since $P_{s, +} = \frac{\sgn(s^{-1}\slashed D+E) + 1}{2}$, it follows that $\{P_{s, +} \}_{s\in [0, \infty)}$ lies in 
	$(\cD_L(\widetilde X)^\Gamma_K)^+$. 
	Similarly, we also have $(s\mapsto P_+(\slashed D-sE)) \in \cD_L(\widetilde X)^\Gamma_K$. It follows from \cite[Lemma 5.8]{Higson1} that the difference
	$$\{P_{s, +} - P_{s, -}\}_{s\in [0, \infty)}$$
	lies in $\cB_{L,0}(\widetilde X)^\Gamma_K$.
	
	Now we shall adapt the proof of \cite[Theorem 5.5]{Higson2} to  show that 
	\[\rho_{1,K}(g) = [P_{s, +}]-[P_{s, -}]. \] 
	First, if we decompose $\mathfrak S(\widetilde X) = \mathfrak S(\widetilde X)^+ \oplus \mathfrak S(\widetilde X)^- $ with respect to the grading operator $E$, then the operator $E$ and $\slashed D$ become 
	\[  E = \begin{pmatrix}
	1 & 0 \\ 0 & -1 
	\end{pmatrix} \textup{ and } \slashed D = \begin{pmatrix}
	0 & \slashed D^- \\
	\slashed D^+ & 0 
	\end{pmatrix}. \] 
	For each $s>0$, let $\chi_s\colon \mathbb R \to [-1, 1]$ be the normalizing function given by 
	\[ \chi_s(x) = \frac{x}{\sqrt{x^2 + s^2}}.\]	
	With the decomposition $\mathfrak S(\widetilde X) = \mathfrak S(\widetilde X)^+ \oplus \mathfrak S(\widetilde X)^- $, we have  
	\[ \chi_s(\slashed D) = \begin{pmatrix}
	0 & V_s \\
	U_s & 0
	\end{pmatrix}   \]
	where $V_s = \slashed D^- (\slashed D^+\slashed D^- + s^2)^{-1/2}$ and $U_s = \slashed D^+ (\slashed D^-\slashed D^+ + s^2)^{-1/2}$. By the explicit construction of $\rho_{1,K}(g)$ (cf. formula \eqref{eq:idem}), we have 
	\begin{align*}
	\rho_{1,K}(g) &= \left[\begin{pmatrix}
	1-(1-U_sV_s)^2&(2-U_sV_s)U_s(1-V_sU_s)\\V_s(1-U_sV_s)&(1-V_sU_s)^2
	\end{pmatrix}\right] - \left[ \begin{pmatrix}
	1 & 0 \\ 0 & 0 
	\end{pmatrix}\right]\\
	& = \left[\begin{pmatrix}
	1-\left(\frac{s^2}{\slashed D^+\slashed D^- + s^2}\right)^2 & \frac{s^4\slashed D^+ }{(\slashed D^-\slashed D^+ + s^2)^{5/2}} + \frac{s^2\slashed D^+}{(\slashed D^-\slashed D^+ + s^2)^{3/2}}\\ 
	& \\
	\frac{s^2 \slashed D^- }{(\slashed D^+\slashed D^- + s^2)^{3/2}}  & \left(\frac{s^2}{\slashed D^-\slashed D^+ + s^2}\right)^2
	\end{pmatrix}\right] -  \left[ \begin{pmatrix}
	1 & 0 \\ 0 & 0 
	\end{pmatrix}\right]
	\end{align*}
	Note that the idempotent 
	\begin{align*}
	& 	\begin{pmatrix}
	1-(1-U_sV_s)^2&(2-U_sV_s)U_s(1-V_sU_s)\\V_s(1-U_sV_s)&(1-V_sU_s)^2
	\end{pmatrix}\\
	= &   \begin{pmatrix}
	1-\left(\frac{s^2}{\slashed D^+\slashed D^- + s^2}\right)^2 & \frac{s^4\slashed D^+ }{(\slashed D^-\slashed D^+ + s^2)^{5/2}} + \frac{s^2\slashed D^+}{(\slashed D^-\slashed D^+ + s^2)^{3/2}}\\ 
	& \\
	\frac{s^2 \slashed D^- }{(\slashed D^+\slashed D^- + s^2)^{3/2}}  & \left(\frac{s^2}{\slashed D^-\slashed D^+ + s^2}\right)^2
	\end{pmatrix} 
	\end{align*}
	is homotopic to the following projection
	\[\begin{pmatrix}
	1-\frac{s^2}{\slashed D^+\slashed D^- + s^2} & \frac{s\slashed D^+}{\slashed D^-\slashed D^+ + s^2}\\ 
	& \\
	\frac{s \slashed D^- }{\slashed D^+\slashed D^- + s^2}  & \frac{s^2}{\slashed D^-\slashed D^+ + s^2}
	\end{pmatrix}   \]
	through the path of idempotents
	\[  \begin{pmatrix}
	1-\left(\frac{s^2}{\slashed D^+\slashed D^- + s^2}\right)^{2- a} &  \frac{(\slashed D^-\slashed D^+ + s^2)^{2-a} - s^{2(2-a)}}{\slashed D^-\slashed D^+ (\slashed D^-\slashed D^+ + s^2)^{1-a}} \frac{s^{2-a}\slashed D^+}{(\slashed D^-\slashed D^+ + s^2)^{(3-a)/2}}\\ 
	& \\
	\frac{s^{2-a} \slashed D^- }{(\slashed D^+\slashed D^- + s^2)^{(3-a)/2}}  & \left(\frac{s^2}{\slashed D^-\slashed D^+ + s^2}\right)^{2-a}
	\end{pmatrix}.  \]
	Here we have implicitly used the fact that 
	\[ \left(\frac{s^2}{\slashed D^2 + s^2}\right)^b \textup{ lies in } (\cB_{L,0}(\widetilde X)^\Gamma_K)^+ \]
	for each $b> 0$.  In fact, similar to the proof of Lemma \ref{lm:con-est}, for any $b>0$, we have
	$$\frac{1}{(s^{-2}\slashed D^2+1)^b}=\frac{1}{C_b}\int_0^\infty
	e^{-t^{\frac 1 b}(s^{-2}\slashed D^2+1)}dt,
	$$
	where
	$$C_b=\int_0^\infty e^{-t^{\frac 1 b}}dt.$$
	By Lemma \ref{lemma:largespectralgap},  we have
	$$\|e^{-y\widetilde D^2}\|_{1,K}\leqslant Ce^{-y\varepsilon}$$
	for some $C>0$ and $\varepsilon>0$. Therefore, 
	\begin{equation}\label{eq:1/D2+1}
	\Big\|\frac{1}{(s^{-2}\slashed D^2+1)^b}\Big\|_{1,K}
	\leqslant \frac{1}{C_b}\int_0^\infty
	Ce^{-t^{\frac1b}(s^{-2}\varepsilon+1)}dt=
	\frac{C}{(s^{-2}\varepsilon +1)^b}\leqslant C.
	\end{equation}

	Set $S_1=E$ and $S_2=\{\sgn(\slashed D +sE)\}_{s\in [0, \infty)}$, which are elements in $\cD_{L}(\widetilde X)^\Gamma_K$ whose squares are equal to $1$. A straightforward calculation shows that  
	\[ \frac{-S_2S_1S_2+1}{2} = \begin{pmatrix}
	1-\frac{s^2}{\slashed D^+\slashed D^- + s^2} & \frac{s\slashed D^+}{\slashed D^-\slashed D^+ + s^2}\\ 
	& \\
	\frac{s \slashed D^- }{\slashed D^+\slashed D^- + s^2}  & \frac{s^2}{\slashed D^-\slashed D^+ + s^2}
	\end{pmatrix}.   \]
	From the above discussion, we conclude that 
	$$\rho_{1,K}(g)=\big[\frac{-S_2S_1S_2+1}{2}\big]-\big[\frac{S_1+1}{2}\big].$$
	On the other hand, using the fact that $\slashed D$ and $E$ anticommute, we have that 	$$[P_{s, +}]-[P_{s, -}]=\big[\frac{-S_1S_2S_1+1}{2}\big]- \big[\frac{S_2+1}{2}\big].$$
	
	Now set $U=S_1S_2$, which is obviously invertible. 
	\begin{claim*}
		The spectrum of $U$ excludes $\{-a:a\in\R,a\geqslant 0 \}$. 
	\end{claim*}
In fact, for any $a>0$, we have
	$$U+a=S_1S_2+aS_1^2=S_1(aS_1+S_2).$$
	It suffices to show that $(aS_1+S_2)^2$ is invertible. In fact, 
	\begin{align*}
	(S_2+aS_1)^2=&\Big(s\mapsto 1+a^2+\frac{2as}{\sqrt{D^2+s^2}}\Big)\\
	=&\Big(s\mapsto 2a\big(\frac{1+a^2}{2a}+\frac{1}{\sqrt{s^{-2}D^2+1}}\big)\Big)
	\end{align*}
	When $a>0$, we have $\frac{1+a^2}{2a}>1$. 
	
	Let $r$ be the spectral radius of $(s\mapsto \frac{1}{s^{-2}D^2+1})$. By the spectral radius theorem, we have
	$$r=\lim_{n\to\infty}\Big\|\frac{1}{(s^{-2}\slashed D^2+1)^\frac{n}{2}}\Big\|_{1,K}^{\frac 1 n}\leqslant \lim_{n\to\infty}\sqrt[n]{C}=1.$$
	Therefore, we see that $S_2+aS_1$ is invertible for any $a>0$. Our claim follows.
	
	Since the spectrum of $U$ excludes $\{-a:a\in\R,a\geqslant 0 \}$, $\sqrt{z}$ is a holomorphic function on the spectrum of $U$. Thus there is an invertible element $V\in \cD_L(\widetilde X)^\Gamma_K$ such that $V^2=U$.
	
	Note that 
	$$S_1U=U^{-1}S_1,~S_2U=U^{-1}S_2.$$
	As $S_1^2=S_2^2=1$, it follows from the holomorphic functional calculus that 
	$$S_1V=V^{-1}S_1,~S_2V=V^{-1}S_2.$$
	Set $W=VS_2$. Then we have
	$$W^{-1}S_1W=S_2V^{-1}S_1VS_2=S_2S_1V^2S_2=S_2$$
	and similarly $W^{-1}S_2W=S_1$. Therefore
	$$[P_{s, +}]-[P_{s, -}]= [W^{-1}\frac{-S_1S_2S_1+1}{2}W]-[W^{-1}\frac{S_2+1}{2}W]=\rho_{1,K}(g).$$ This finishes the proof.
\end{proof}

Now we are ready to prove Theorem \ref{thm:product}.
\begin{proof}[Proof of Theorem $\ref{thm:product}$]

Let us first prove the even dimensional case, that is, when $ {n = \dim X}$ is even. Recall that,  the (Clifford) volume element on $\widetilde X$ is given by 
$$\omega=i^{\frac{n}{2}}e_1e_2\cdots e_n,$$
where $\{e_1,\cdots,e_n\}$ is an oriented local orthonormal frame of the tangent space of $\widetilde X$. The corresponding volume element on $\widetilde X\times \mathbb R$ is given by 
$$\omega_{\widetilde X\times \mathbb R}=i^{\frac{n}{2} + 1}e_1e_2\cdots e_n e_{\mathbb R},$$
where $e_{\mathbb R}$ corresponds to the Clifford multiplication by the unit vector of $\mathbb R$. Let us denote by $E$ the operator determined by multiplication of $\omega$. 
    Recall the natural isomorphism: 
    \[ \cl_n \cong \cl_{n+1}^{0} \]
    by sending $e_j$ to $e_{n+1}e_j$ for all $1\leq j \leq n$. In particular,  this induces natural identifications 
     \[ \mathfrak S(\widetilde X) \cong \mathfrak S(\widetilde X\times \mathbb R)^0  \] 
     and 
     \[\mathfrak S(\widetilde X\times \mathbb R)^1 \xrightarrow{e_{\mathbb R}} \mathfrak S(\widetilde X\times \mathbb R)^0 \cong \mathfrak S(\widetilde X),  \]
    Under these  identifications, the operators
    \[ 1\hotimes D_\R+  \slashed D\hotimes 1 \pm  E\hotimes e_\R 
    \colon  C_c^\infty(\widetilde X \times \mathbb R, \mathfrak S(\widetilde X\times \mathbb R)^0) \to C_c^\infty(\widetilde X\times \mathbb R, \mathfrak S(\widetilde X\times \mathbb R)^1) \]
    become 
     \[ 1\otimes \frac{d}{dt}+  \slashed D\otimes 1 \pm  E\otimes 1 
     \colon  C_c^\infty(\widetilde X \times \mathbb R, \mathfrak S(\widetilde X)) \to C_c^\infty(\widetilde X\times \mathbb R, \mathfrak S(\widetilde X)). \]
    In particular,  the $l^1$-higher rho invariant $\rho_{1,K}(g+dt^2)$ is  represented by
	\begin{align*}
	&\Big[ (1\hotimes D_\R+  \slashed D\hotimes 1 + is E\hotimes i e_\R )(1\hotimes D_\R+  \slashed D\hotimes 1 - is E\hotimes i e_\R )^{-1}\Big]\\
	=&\Big[  (s^{-1}\otimes \frac{d}{dt}+  s^{-1}\slashed D\otimes 1 - E\otimes  1 )(s^{-1}\otimes \frac{d}{dt}+  s^{-1}\slashed D\otimes 1 +  E\otimes 1 )^{-1}\Big]\\
	=&\Big[ ( 1\otimes iD_s+  s^{-1}\slashed D\otimes 1 - E\otimes  1 )(1\otimes iD_s+  s^{-1}\slashed D\otimes 1 +  E\otimes 1 )^{-1}\Big]
	\end{align*}
	in $K_1(\cB_{L,0}(\widetilde X\times \R)^\Gamma_K)$, where we have 
	\[ D_s \coloneqq  (1+s)^{-1} \frac{1}{i} \frac{d}{dt}. \] 
	Note that the path 
	\begin{align*}
		&\frac{1\otimes iD_s+  s^{-1}\slashed D\otimes 1 - E\otimes  1 }{1\otimes iD_s+  s^{-1}\slashed D\otimes 1 +  E\otimes 1 }
		= 1 - \frac{2E\otimes 1}{1\otimes iD_s+  (s^{-1}\slashed D + E)\otimes 1 } 
	\end{align*}
is homotopic to the path 
\[ 1 - \frac{2E (s^{-2}\slashed D^2+1)^{-1/2}\otimes 1}{1\otimes iD_s+  (s^{-1}\slashed D + E) (s^{-2}\slashed D^2+1)^{-1/2}\otimes 1 }  \]
through the family of paths 
\begin{align*}
	&\frac{1\otimes iD_s+  (s^{-1}\slashed D - E) (s^{-2}\slashed D^2+1)^{-r/2}\otimes 1  }{1\otimes iD_s+  (s^{-1}\slashed D + E)(s^{-2}\slashed D^2+1)^{-r/2}\otimes 1  }\\
	=& 1 - \frac{2E (s^{-2}\slashed D^2+1)^{-r/2}\otimes 1}{1\otimes iD_s+  (s^{-1}\slashed D + E) (s^{-2}\slashed D^2+1)^{-r/2}\otimes 1 }
\end{align*}
where $s\in [0, \infty)$ and $r\in [0,1]$. Let us denote 
\[b_r(s) =  \begin{cases}
{\displaystyle \frac{2E (s^{-2}\slashed D^2+1)^{-r/2}\otimes 1}{1\otimes iD_s+  (s^{-1}\slashed D + E) (s^{-2}\slashed D^2+1)^{-r/2}\otimes 1 } } & \textup{ if }  s >0, \\ 
& \\
	0  & \textup{ if } s = 0.
\end{cases}   \] 
\begin{claim*}
For each $r\in [0, 1]$, the path 
\[ \big\{b_r(s)\big\}_{s\in [0, \infty)} \textup{ lies in } \cB_{L,0}(\widetilde X\times \R)^\Gamma_K.  \]
Moreover, the family $\{b_r\}_{r\in [0, 1]}$ is continuous in $r$. 
\end{claim*}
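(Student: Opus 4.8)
The plan is to put $b_r(s)$ in closed form as an integral of heat operators, to establish $\|\cdot\|_{1,K}$-estimates on it that are uniform in both $s$ and $r$, and then to read off the defining conditions of $\cB_{L,0}(\widetilde X\times\R)^\Gamma_K$ together with continuity in $r$. The starting point is the Clifford relation: since $\slashed D$ and $E$ anticommute and $E^2=1$, we have $(s^{-1}\slashed D\pm E)^2=s^{-2}\slashed D^2+1$. Writing $P=(s^{-2}\slashed D^2+1)^{-r/2}$, $N_\pm=(s^{-1}\slashed D\pm E)P\hotimes 1$ and $M=1\hotimes iD_s$ (with $iD_s=(1+s)^{-1}\tfrac{d}{dt}$ skew-adjoint, so $M^2=-1\hotimes(1+s)^{-2}\Delta_\R$ where $\Delta_\R=-\tfrac{d^2}{dt^2}$), one gets $N_+-N_-=2(EP\hotimes 1)$, $[N_+,M]=0$, and $R_s:=N_+^2-M^2=(s^{-2}\slashed D^2+1)^{1-r}\hotimes 1+1\hotimes(1+s)^{-2}\Delta_\R\geqslant 1$. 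Hence $N_++M$ is invertible, $(N_++M)^{-1}=(N_+-M)R_s^{-1}$, and since $N_+^2$ and $M^2$ live in distinct tensor factors, $R_s^{-1}=\int_0^\infty e^{-\lambda(s^{-2}\slashed D^2+1)^{1-r}}\hotimes e^{-\lambda(1+s)^{-2}\Delta_\R}\,d\lambda$, so
\[ b_r(s)=2(EP\hotimes 1)(N_+-M)\,R_s^{-1}. \]
Using $E(s^{-1}\slashed D+E)=1-s^{-1}\slashed DE$, this is an absolutely convergent $\lambda$-integral of elementary tensors whose $\R$-factors are $e^{-\lambda(1+s)^{-2}\Delta_\R}$ or $iD_se^{-\lambda(1+s)^{-2}\Delta_\R}$, and whose $\widetilde X$-factors are, up to multiplication by the Clifford multiplication $\pm E$ (with $\|E\|_{1,K}=1$), functions of $\slashed D$ of the shape $(s^{-1}\slashed D)^{j}(s^{-2}\slashed D^2+1)^{-\beta}e^{-\lambda(s^{-2}\slashed D^2+1)^{1-r}}$ with $j\in\{0,1\}$ and $\beta\in\{r,r/2\}$.

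The decisive step is to estimate these pieces uniformly. On the $\R$-side, $\|e^{-\lambda(1+s)^{-2}\Delta_\R}\|_\op=1$ and $\|iD_se^{-\lambda(1+s)^{-2}\Delta_\R}\|_\op\leqslant C\lambda^{-1/2}$, uniformly in $s$, by a direct Fourier computation (the factor $(1+s)^{-1}$ in $iD_s$ absorbs the scaling of the heat kernel). On the $\widetilde X$-side, the idea is to express each factor, using the Laplace and Bochner-subordination representations $(s^{-2}\slashed D^2+1)^{-\beta}=\tfrac{1}{\Gamma(\beta)}\int_0^\infty w^{\beta-1}e^{-w}e^{-ws^{-2}\slashed D^2}\,dw$ and $e^{-\lambda u^{1-r}}=\int_0^\infty e^{-wu}\eta_{1-r,\lambda}(w)\,dw$ (for $0<1-r<1$, with $\eta_{1-r,\lambda}\geqslant 0$ a probability density; the endpoint cases are immediate), as absolutely convergent integrals of the heat operator $e^{-ws^{-2}\slashed D^2}$ and the heat-derivative operator $s^{-1}\slashed De^{-ws^{-2}\slashed D^2}$. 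For those, the Lichnerowicz formula and the hypothesis \eqref{eq:k>} give the strict bound $|\slashed D|>2(K_\Gamma+K)/\tau$, so Lemma \ref{lemma:largespectralgap}, applied to $e^{-x^2}$ and to $xe^{-x^2}$ (whose Fourier transforms have Gaussian decay), yields $\lambda_0,\varepsilon_0>0$ with
\[ \|e^{-ws^{-2}\slashed D^2}\|_{1,K}\leqslant \lambda_0 e^{-\varepsilon_0 w/s^2},\qquad \|s^{-1}\slashed De^{-ws^{-2}\slashed D^2}\|_{1,K}\leqslant \lambda_0 w^{-1/2}e^{-\varepsilon_0 w/s^2}, \]
uniformly in $s>0$ (and $\|e^{-w\slashed D^2}\|_{1,K}\leqslant\lambda_0 e^{-\varepsilon_0 w}$). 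Substituting these and re-summing the subordination/Laplace integrals at the shifted argument $u=1+\varepsilon_0 s^{-2}$, one bounds each $\widetilde X$-factor by $Ce^{-\lambda}$ times a factor that vanishes as $s\to 0^+$ (a power of $(1+\varepsilon_0 s^{-2})^{-1}$, or $e^{-\lambda(1+\varepsilon_0 s^{-2})^{1-r}}$), with $C$ independent of $s$ and $r$, once a representation adapted to the relevant range of $r$ is chosen. Combining with the $\R$-side bounds gives $\|b_r(s)\|_{1,K}\leqslant C\int_0^\infty(1+\lambda^{-1/2})e^{-\lambda}\,d\lambda<\infty$ uniformly in $s\in(0,\infty)$ and $r\in[0,1]$, and $\|b_r(s)\|_{1,K}\to 0$ as $s\to 0^+$, so $b_r$ is continuous at $s=0$, where it equals $0$.

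With these estimates the rest is routine. Uniform continuity of $s\mapsto b_r(s)$ on $[0,\infty)$ follows from $\|\cdot\|_{1,K}$-continuity of the integrand in $s$ on compact subsets of $(0,\infty)$, dominated convergence against the uniform bound, and the continuity at $0$ just noted. For the decay of propagation as $s\to\infty$, one truncates the $\lambda$- and $w$-integrals (error uniformly $O(e^{-\Lambda})$) and replaces each heat factor $e^{-ws^{-2}\slashed D^2}$, $e^{-\lambda(1+s)^{-2}\Delta_\R}$ by a finite-propagation operator obtained by cutting off its Fourier transform, again with uniformly small $\|\cdot\|_{1,K}$-error; the resulting approximant lies in $\C(\widetilde X\times\R)^\Gamma$ (each $b_r(s)$ is itself locally compact, the heat operators being locally Hilbert--Schmidt), and its $\widetilde X$- and $\R$-propagations are $O(\sqrt w/s)$ and $O(\sqrt\lambda/(1+s))$, both tending to $0$ as $s\to\infty$. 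This shows $\{b_r(s)\}_{s\in[0,\infty)}\in\cB_{L,0}(\widetilde X\times\R)^\Gamma_K$. Finally, $r\mapsto b_r$ is continuous into $\cB_{L,0}(\widetilde X\times\R)^\Gamma_K$ because every estimate above is uniform in $r$ and the building blocks depend continuously on $r$ in $\|\cdot\|_{1,K}$ uniformly in $s$. I expect the real difficulty to lie entirely in the second step — obtaining the $\|\cdot\|_{1,K}$-bounds uniformly in both $s$ and $r$, in particular for the term carrying $s^{-1}\slashed D$ as $s\to 0^+$, as $s\to\infty$, or as $r$ approaches an endpoint — for which the reduction to the heat and heat-derivative operators, where \eqref{eq:k>} and Lemma \ref{lemma:largespectralgap} supply $s$-uniform Gaussian estimates, is the essential device.
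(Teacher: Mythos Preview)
Your proposal is correct and follows essentially the same route as the paper: both rationalize the denominator of $b_r(s)$ using $(s^{-1}\slashed D\pm E)^2=s^{-2}\slashed D^2+1$ so that the inverse becomes $(N_+-M)$ times the inverse of a sum of commuting tensor factors, and then control everything in $\|\cdot\|_{1,K}$ via Laplace/heat-integral representations together with the Gaussian bound from Lemma~\ref{lemma:largespectralgap}. The only tactical difference is that the paper, after rationalizing, inserts a small $\delta>0$ to write the denominator as $1\otimes(D_s^2+\delta)+\big((s^{-2}\slashed D^2+1)^{1-r}-\delta\big)\otimes 1$ and then simply cites the already-established estimate $\|(s^{-2}\slashed D^2+1)^{-b}\|_{1,K}\leqslant C(s^{-2}\varepsilon+1)^{-b}$ from the proof of Lemma~\ref{lm:projrepn} (line~\eqref{eq:1/D2+1}); you instead expand $R_s^{-1}$ directly as a $\lambda$-integral of tensor heat operators and handle the fractional power $(s^{-2}\slashed D^2+1)^{1-r}$ by Bochner subordination. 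Your version is more explicit about the uniformity in $s$ and $r$, the continuity at $s=0$, and the propagation-decay approximation, while the paper's is shorter because it defers all of this to the earlier lemma; the underlying mechanism is the same.
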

Recall that 
\[ (s^{-1}\slashed D \pm E)^2 = s^{-2}\slashed D^2 + 1. \]
Therefore, there exists a constant $\delta>0$ such that 
\begin{align*}
	& \frac{2E (s^{-2}\slashed D^2+1)^{-r/2}\otimes 1}{1\otimes iD_s+  (s^{-1}\slashed D + E) (s^{-2}\slashed D^2+1)^{-r/2}\otimes 1 }  \\
= 	&  \frac{\big(2E (s^{-2}\slashed D^2+1)^{-r/2}\otimes 1\big) \cdot  \big(-1\otimes iD_s+  (s^{-1}\slashed D + E) (s^{-2}\slashed D^2+1)^{-r/2}\otimes 1\big)}{1\otimes D_s^2+  (s^{-2}\slashed D^2+1)^{1-r}\otimes 1 } \\
= &  \frac{\big(2E (s^{-2}\slashed D^2+1)^{-r/2}\otimes 1\big) \cdot  \big(-1\otimes iD_s+  (s^{-1}\slashed D + E) (s^{-2}\slashed D^2+1)^{-r/2}\otimes 1\big)}{1\otimes (D_s^2 + \delta) +  ( (s^{-2}\slashed D^2+1)^{1-r} - \delta)\otimes 1 }  \\
= & \frac{- 2E (s^{-2}\slashed D^2+1)^{-r/2}\otimes iD_s (D_s^2 + \delta)^{-1} + 2E (s^{-1}\slashed D + E) (s^{-2}\slashed D^2+1)^{-r}\otimes  (D_s^2 + \delta)^{-1}}{1\otimes 1 +  ( (s^{-2}\slashed D^2+1)^{1-r} - \delta)\otimes (D_s^2 + \delta)^{-1}  }, 
\end{align*}
where the last term lies in $\cB_{L,0}(\widetilde X\times \R)^\Gamma_K$ by  estimates similar to those in the proof of Lemma \ref{lm:projrepn} (cf. line \eqref{eq:1/D2+1}). Furthermore, the same estimates also show that  the family $\{b_r\}_{r\in [0, 1]}$ is continuous in $r\in [0, 1]$.

Let $P_{s, +}$ be the positive projection of $s^{-1}\slashed D+E$, that is, $P_{s, +} = \varphi(s^{-1}\slashed D+E)$,  where  
\[\varphi(x)=\begin{cases}
	1, & \textup{ if } x\geqslant 0,\\ 
	0,& \textup{ if } x < 0.
\end{cases} \]
Let $Q_{s, +}=1 -P_{s, +}$. Similarly, we define $P_{s,-} = \varphi(s^{-1}\slashed D-E)$ and $Q_{s, -}=1-P_{s, -}$.
Then we have 
\begin{align*}
	b_1(s) & =  1 - \frac{2E (s^{-2}\slashed D^2+1)^{-1/2}\otimes 1}{1\otimes iD_s+  (s^{-1}\slashed D + E) (s^{-2}\slashed D^2+1)^{-1/2}\otimes 1 } \\
	 & = \frac{1\otimes iD_s+  (s^{-1}\slashed D - E) (s^{-2}\slashed D^2+1)^{-1/2}\otimes 1}{1\otimes iD_s+  (s^{-1}\slashed D + E) (s^{-2}\slashed D^2+1)^{-1/2}\otimes 1 } \\
	 & = \frac{1\otimes iD_s+  (P_{s, -} - Q_{s, -})\otimes 1}{1\otimes iD_s+  (P_{s, +} - Q_{s, +})\otimes 1 } \\
	 & = \frac{P_{s,-}\otimes (iD_s + 1) +  Q_{s, -}\otimes (iD_s - 1)}{P_{s,+}\otimes (iD_s + 1) +  Q_{s, +}\otimes (iD_s - 1) }
\end{align*}
A straightforward calculation shows that 
\begin{align*}
	& \big(P_{s,+}\otimes (iD_s + 1) +  Q_{s, +}\otimes (iD_s - 1)\big)^{-1} \\ 
	= &  P_{s,+}\otimes (iD_s + 1)^{-1} +  Q_{s, +}\otimes (iD_s - 1)^{-1}.
\end{align*}
It follows that 
\begin{align*}
& 	\frac{P_{s,-}\otimes (iD_s + 1) +  Q_{s, -}\otimes (iD_s - 1)}{P_{s,+}\otimes (iD_s + 1) +  Q_{s, +}\otimes (iD_s - 1) } \\
 = 	& P_{s, -}P_{s, +} \otimes 1 +  P_{s, -} Q_{s, +} \otimes (iD_s + 1)(iD_s - 1)^{-1}  \\
 & \quad + Q_{s, -}P_{s, +} \otimes (iD_s - 1)(iD_s + 1)^{-1} + Q_{s, -}Q_{s, +} \otimes 1 \\
 =  & \big(P_{s, -} \otimes  (iD_s + 1)(iD_s - 1)^{-1}  + (1- P_{s, -})\otimes 1 \big)  \\
  & \quad \cdot \big(P_{s, +} \otimes  (iD_s - 1)(iD_s + 1)^{-1}  + (1- P_{s, +})\otimes 1 \big)\\
\end{align*}
whose $K$-theory class is precisely the $K$-theory product 
\[  ([P_{s, +}] - [P_{s, -}]) \otimes [(D_s + i)(D_s - i)^{-1}] = \rho_{1,K}(g)\otimes \ind_L(D_\R). \]

Now let us prove the odd dimensional case, that is,  when $n = \dim X$ is odd.  Let $g+dx^2+ dy^2$ be the product metric on $X\times \mathbb R^2$ determined by the metric $g$ on $X$. 
\begin{claim*}
The $l^1$-higher rho invariant $\rho_{1, K}(g + dx^2 + dy^2)$ associated to the metric $g+dx^2+ dy^2$ satisfies the following product formula: 
\begin{equation}\label{eq:oddprod}
\rho_{1,K}(g+dx^2+dy^2)=\rho_{1,K}(g)\otimes \ind_L(D_{\R^2})
\end{equation}
in $K_1(\cB_{L,0}(\widetilde X\times \R^2)^\Gamma_K)$, where $\otimes$ stands for the $K$-theory product 
$$_-\otimes_-\colon K_1(\cB_{L,0}(\widetilde X)^\Gamma_K)\otimes K_0(C^*_L(\R^2))\to K_1(\cB_{L,0}(\widetilde X\times \R^2)^\Gamma_K).$$
\end{claim*}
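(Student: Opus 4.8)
The plan is to establish the claim by the method already used for the even-dimensional case above, with the one-dimensional flat factor $\R$ replaced by the two-dimensional flat factor $\R^2$---so that $\dim(X\times\R^2)$ stays odd and both higher rho invariants in \eqref{eq:oddprod} lie in $K_1$---and then to deduce the odd-dimensional case of Theorem~\ref{thm:product} from the claim by a cancellation argument. First I would set up the Clifford-linear picture on $\widetilde X\times\R^2$: using the canonical isomorphism $\cl_{n}\hotimes\cl_{2}\cong\cl_{n+2}$, identify the $\cl_{n+2}$-Dirac bundle $\mathfrak S(\widetilde X\times\R^2)$ with $\mathfrak S(\widetilde X)\hotimes\mathfrak S(\R^2)$, under which the Clifford-linear Dirac operator on $\widetilde X\times\R^2$ becomes the external (graded) sum of $\slashed D_{\widetilde X}$ and the Euclidean Dirac operator $D_{\R^2}$, and the Clifford volume (grading) operators multiply. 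As in the even case, the Clifford-linear form of Lemma~\ref{lm:con-est} then furnishes a Cayley-type invertible representative of $\rho_{1,K}(g+dx^2+dy^2)$ built from $\slashed D_{\widetilde X\times\R^2}$ and its grading operator, while the $K$-theory product $\rho_{1,K}(g)\otimes\ind_L(D_{\R^2})$ (with respect to an algebra-level product $\cB_{L,0}(\widetilde X)^\Gamma_K\hotimes C^*_L(\R^2)\to\cB_{L,0}(\widetilde X\times\R^2)^\Gamma_K$ defined exactly as for $\R$) has an explicit representative assembled from the Cayley-type unitary representing $\rho_{1,K}(g)$ and the standard local index projection data of $D_{\R^2}$ on the $C^*_L(\R^2)$-factor.

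The core of the argument is then a homotopy of invertibles, strictly parallel to the family $\{b_r(s)\}$ of the even-dimensional proof, deforming the operators $\slashed D_{\widetilde X}\pm E$ (and the bounded functions of them) in the representative of $\rho_{1,K}(g+dx^2+dy^2)$ to $(\slashed D_{\widetilde X}\pm E)(s^{-2}\slashed D_{\widetilde X}^2+1)^{-1/2}$ through the interpolating powers $(s^{-2}\slashed D_{\widetilde X}^2+1)^{-r/2}$, $r\in[0,1]$; after this deformation the invertible factors through the spectral projections $P_{s,\pm}=\varphi(s^{-1}\slashed D_{\widetilde X}\pm E)$ tensored against the resolvent data of $D_{\R^2}$, and---exactly as in the even case---one recognizes its $K$-theory class as $([P_{s,+}]-[P_{s,-}])\otimes\ind_L(D_{\R^2})=\rho_{1,K}(g)\otimes\ind_L(D_{\R^2})$, the identification $\rho_{1,K}(g)=[P_{s,+}]-[P_{s,-}]$ being Lemma~\ref{lm:projrepn} applied to $\slashed D_{\widetilde X}E$ (whose proof is insensitive to the parity of $\dim X$). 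At every stage one must verify membership in $\cB_{L,0}(\widetilde X\times\R^2)^\Gamma_K$: the propagation-tends-to-zero condition is immediate from the supports of the relevant wave operators, and the weighted-$l^1$ bounds follow from Lemma~\ref{lemma:largespectralgap} together with estimates of the type \eqref{eq:1/D2+1} for $(s^{-2}\slashed D^2+1)^{-b}$, using the two facts already recorded in this subsection---that passing to $\widetilde X\times\R^2$ leaves the spectral gap of the Dirac operator unchanged (as $\R^2$ is flat, $\slashed D_{\widetilde X\times\R^2}^2\geqslant\slashed D_{\widetilde X}^2$, so the hypothesis on $\kappa$ is preserved) and leaves the constants $\tau$, $C_0$, $K_\Gamma$ governing those estimates unchanged.

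I expect the main obstacle to be exactly this last point: establishing Banach-algebra membership and norm-continuity of the homotopy inside $\cB_{L,0}(\widetilde X\times\R^2)^\Gamma_K$. Unlike the $C^\ast$-algebraic arguments of Higson--Roe \cite{Higson1,Higson2} and Zeidler \cite{Zeidlerproduct}, which rely on positivity, here every bound must be produced from the explicit Gaussian/exponential decay estimates, and it is the large scalar curvature hypothesis that makes them converge; the Clifford-algebra bookkeeping for the flat $\R^2$ factor, by contrast, is routine. Granting the claim, the odd-dimensional case of Theorem~\ref{thm:product} follows: applying the even-dimensional product formula to the even-dimensional (non-cocompact but admissible) manifold $X\times\R$ gives $\rho_{1,K}(g+dx^2)\otimes\ind_L(D_\R)=\rho_{1,K}(g+dx^2+dy^2)$; combining this with the claim, with the multiplicativity $\ind_L(D_{\R^2})=\ind_L(D_\R)\otimes\ind_L(D_\R)$ of the local index \cite{Yulocalization}, and with associativity of the $K$-theory product yields $\rho_{1,K}(g+dx^2)\otimes\ind_L(D_\R)=\bigl(\rho_{1,K}(g)\otimes\ind_L(D_\R)\bigr)\otimes\ind_L(D_\R)$; and since $_-\otimes\ind_L(D_\R)$ is an isomorphism (the Eilenberg swindle of the footnote to \eqref{eq:K-theoryprod}), cancelling one copy of $\ind_L(D_\R)$ gives $\rho_{1,K}(g+dx^2)=\rho_{1,K}(g)\otimes\ind_L(D_\R)$, which is the product formula in the odd-dimensional case.
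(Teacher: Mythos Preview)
Your overall strategy is exactly what the paper indicates: it simply says the claim is proved ``similar to the even dimensional case (cf.\ \cite[Proposition D.3]{Weinberger:2016dq})'' and omits all details, so your plan to rerun the even-case argument with $\R$ replaced by $\R^2$, together with the same $\{b_r(s)\}$-type homotopy and the same $\|\cdot\|_{1,K}$-estimates (Lemmas \ref{lemma:largespectralgap}, \ref{lemma:local_appro}, and the bound \eqref{eq:1/D2+1}), is precisely the intended route. Your final paragraph deducing the odd case of Theorem~\ref{thm:product} from the claim is also verbatim what the paper does.

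There is, however, a genuine slip in the middle of your sketch. In the odd case $\rho_{1,K}(g)$ lives in $K_1$, not $K_0$, so the line ``$\rho_{1,K}(g)=[P_{s,+}]-[P_{s,-}]$ by Lemma~\ref{lm:projrepn}'' is a type mismatch: Lemma~\ref{lm:projrepn} is stated (and only makes sense) for $\dim X$ even, where the rho invariant is a formal difference of projections. Here the roles of the two factors are swapped relative to the even-case proof: the $K_1$ factor is $\rho_{1,K}(g)$, represented by the Cayley invertible $\{(\slashed D E+si)(\slashed D E-si)^{-1}\}_s$ of Lemma~\ref{lm:con-est}, while the $K_0$ factor is $\ind_L(D_{\R^2})$, represented by a difference of projections built from $D_{\R^2}$. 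Concretely, after the deformation you should be aiming at an invertible of the shape
\[
\bigl(v_s\otimes P^{\R^2}_{s,+}+1\otimes(1-P^{\R^2}_{s,+})\bigr)\bigl(v_s^{-1}\otimes P^{\R^2}_{s,-}+1\otimes(1-P^{\R^2}_{s,-})\bigr),
\]
whose class is $[v_s]\otimes\bigl([P^{\R^2}_{s,+}]-[P^{\R^2}_{s,-}]\bigr)$, rather than factoring through spectral projections of $\slashed D_{\widetilde X}\pm E$. The analytic work (Banach-algebra membership, continuity in $r$ and $s$) is unchanged; only the algebraic bookkeeping of which side carries the projection data needs to be corrected.
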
 
The proof of this claim is similar to the even dimensional case (cf. \cite[Proposition D.3]{Weinberger:2016dq}), and we shall omit the details. Now by the product formula from the even dimensional case, we have 
\[ \rho_{1,K}(g+dx^2+dy^2)=\rho_{1,K}(g + dx^2)\otimes \ind_L(D_{\R}).\] Recall that we also have the following product formula for the Dirac operators in the $C^\ast$-algebraic setting:
\[ \ind_L(D_{\R^2}) = \ind_L(D_{\R})\otimes \ind_L(D_{\R}).\]
These, together with the formula \eqref{eq:oddprod}, imply that 
\[ \rho_{1,K}(g+dx^2)=\rho_{1,K}(g)\otimes \ind_L(D_{\R}) \]
for the case where $\dim X$ is odd. This finishes the proof. 
\end{proof}

\section{Higher index on manifolds with boundary}\label{sec:1}
In this section, we review the construction of higher indices of Dirac operators on spin manifolds with boundary whose metrics are assumed to have product structure near the boundary and also have positive scalar curvature near the boundary. This definition was originally introduced in \cite{Roecoarse,Bunke} and is more detailed stated in \cite{Xiepos}. 

\begin{geoset}\label{geoset}
	Let $M$ be a compact spin manifold with boundary $\partial M$. Assume that the metric $g$ on $M$ has product structure near the boundary and we denote by $g_\partial$ the restriction of $g$ to $\partial M$. Let $M_\infty$ be the  manifold obtained by attaching an infinite cylinder to $M$, that is,  $M_\infty=M\cup_{\partial M}\partial M\times[0,\infty)$. Since $g$ has product structure near the boundary, $g$ extends canonically to a Riemannian metric $g_\infty$ on $M_\infty$, where  $g_\infty=g_\partial+dt^2$  on the cylindrical part. 
	
	Now let $\widetilde M_\infty$ be the universal cover of $M_\infty$ and $\Gamma$ the fundamental group of $M_\infty$, which is also the fundamental group of $M$. If we denote the universal cover of $M$ by $\widetilde M$, then we have $\widetilde M_\infty = \widetilde M\cup_{\partial \widetilde M}\partial \widetilde M\times [0,\infty)$.
	Let $S(\widetilde M_\infty)$ be the spinor bundle on $\widetilde M_\infty$. Endow $\widetilde M_\infty$ with the metric $\widetilde g_\infty$ which is the lift of the metric $g_\infty$ on $M_\infty$. The fundamental group $\Gamma$  acts isometrically on $\widetilde M_\infty$ and $S(\widetilde M_\infty)$.  Let $\widetilde D$ be the associated Dirac operator on $\widetilde M_\infty$. Note that  $\widetilde D$ is $\Gamma$-equivariant, that is,
	$$\widetilde D(\gamma f)=\gamma \widetilde Df, \forall\gamma\in\Gamma, $$
	where $f$ is any compactly support smooth section of $S(\widetilde M_\infty)$.
	
	For $0\leqslant s<\infty$, we denote $M_s=M\cup_{\partial M}\partial M\times[0,s]\subset M_\infty$ and 
	$\widetilde M_s = \widetilde M\cup_{\partial \widetilde M}\partial \widetilde M\times [0,s]\subset \widetilde M_\infty.$
%
\end{geoset}

Using the Hilbert space $L^2(\widetilde M_\infty,S(\widetilde M_\infty))$, the Hilbert space of all square-integrable sections of $S(\widetilde M_\infty)$, we define the $C^*$-algebra $C^*(\widetilde M_\infty)^\Gamma$ as in Definition \ref{def:Roe}.
\begin{definition}\label{def:supporton}
	 We say an operator  $T\in B(H)^\Gamma$ is supported on $\widetilde M_{s}$ if $$\chi_{\partial \widetilde M\times[s,+\infty)} \circ T\text{ and }T\circ \chi_{\partial \widetilde M\times[s,+\infty)}$$
	  are both zero. We define $C^*(\widetilde M_s)^\Gamma$ to be the operators in $C^*(\widetilde M_\infty)^\Gamma$ supported on $\widetilde M_s$. 
\end{definition}
For any $0<s<\infty$, we have 
$$C^*(\widetilde M_s)^\Gamma\cong \cK\otimes C^*_r(\Gamma).$$
\subsection{Even dimensional case}\label{subsec:evenindex}
Let us assume the same notation as in the Geometric Setup $\ref{geoset}$.  Recall that when $\dim M$ is even, the spinor bundle on $M_\infty$ admits a natural $\mathbb Z_2$-grading and the Dirac operator $D$ on $M_\infty$ is an odd operator with respect to this $\mathbb Z_2$-grading.

Recall the definition of normalizing function given in Definition \ref{def:normalizingfunc}. Given a normalizing function $F$, we obtain $p_{F(\widetilde D)}$ as in line \eqref{eq:idem}.

The difference from the definition of the higher index for closed manifold in Section \ref{sec:higherindexclosed} is the following cut-off function.
\begin{definition}\label{def:cutoff}
Let $\Psi$ be a smooth cut-off function on $\widetilde M_\infty$ such that $|\Psi(x)|\leqslant 1$, $\Psi(x)=1$ for all $x\in\widetilde M_0 = \widetilde M$ and $\Psi(x)=0$ on $\partial\widetilde M\times[1,+\infty)$. Also, we choose $\Psi$ so that  on the subspace  $\partial\widetilde M\times [0,1]$, it only depends on the parameter $t\in[0,1]$. We furthermore define $\Psi_T$ on $\widetilde M_\infty$ such that $\Psi_T=1$ on $\widetilde M_T$ and $\Psi_T(x,t)=\Psi(x,t-T)$ for $(x, t)\in\partial\widetilde M\times [T,+\infty)$.
\end{definition}

We define
\begin{equation}\label{eq:qFDT}
q_{F(\widetilde D),T}=\big(p_{F(\widetilde D)}-\begin{psmallmatrix}
1&0\\0&0
\end{psmallmatrix}\big)\Psi_T + \begin{psmallmatrix}
1&0\\0&0
\end{psmallmatrix}.
\end{equation}
Clearly, $q_{F(\widetilde D),T}$ lies in $M_2((C^*(\widetilde M_{T+5N_F})^\Gamma)^+)$, where $(C^*(\widetilde M_{T+5N_F})^\Gamma)^+$ is the unitalization of $C^*(\widetilde M_{T+5N_F})^\Gamma$.
\begin{lemma}\label{lemma:q^2-q}
	With the same setup as above, let $\widetilde D$ be the Dirac operator on $\widetilde M_\infty$. Then there exist  $T >0$ and  a smooth normalizing function $F$ whose Fourier transform is compactly supported such that 
	\begin{equation}\label{eq:quasi-idem}
\|q_{F(\widetilde D),T}^2-q_{F(\widetilde D),T}\|_\op<1/2.
	\end{equation}
\end{lemma}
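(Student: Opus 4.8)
The plan is to exploit the fact that, away from the boundary, the metric on $\widetilde M_\infty$ looks like a product $\partial \widetilde M \times [0,\infty)$ with the positive-scalar-curvature metric $g_\partial$ on $\partial \widetilde M$, so that the Dirac operator on the cylindrical end is invertible with a uniform spectral gap. First I would fix, via the Lichnerowicz formula on the collar, a constant $\sigma_0>0$ such that $\widetilde D_\partial^2 \geq \sigma_0$ on $\partial \widetilde M$, hence on the cylindrical part $\widetilde D^2 = \widetilde D_\partial^2 - \partial_t^2 \geq \sigma_0$ as well. Choose the normalizing function $F = G$ of line \eqref{eq:G(x)}, or more precisely a rescaled version $G_t(x) = G(tx)$, and note that by finite propagation speed (Definition \ref{def:normalizingfunc}(3), after a smooth cutoff of $\widehat G$) one may also arrange $F$ to have compactly supported Fourier transform at the cost of an operator-norm error that is as small as we wish; this handles the ``compactly supported Fourier transform'' requirement in the statement. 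The key point is that $p_{F(\widetilde D)} - \begin{psmallmatrix}1&0\\0&0\end{psmallmatrix}$ is built algebraically out of $1 - F(\widetilde D)^2$ and $F(\widetilde D)$, and $1 - G_t(\widetilde D)^2 \to 0$ in operator norm on the region where $\widetilde D$ has a spectral gap; combined with $\|G_t(\widetilde D)\|_{\op}\le 1$, this forces $p_{F(\widetilde D)}$ to be close to a genuine idempotent on the collar.

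The core estimate is then local. Write $q = q_{F(\widetilde D),T}$ and compute $q^2 - q = \big(p_{F(\widetilde D)} - \begin{psmallmatrix}1&0\\0&0\end{psmallmatrix}\big)^2\Psi_T^2 + \big(p_{F(\widetilde D)} - \begin{psmallmatrix}1&0\\0&0\end{psmallmatrix}\big)(\Psi_T^2 - \Psi_T) + (\text{cross terms involving }[\,p_{F(\widetilde D)},\Psi_T\,])$. The first two groups of terms are controlled by $\|p_{F(\widetilde D)}^2 - p_{F(\widetilde D)}\|_{\op}$ restricted to $\supp \Psi_T \subset \partial\widetilde M\times[0,1]\cup(\text{shift by }T)$, which by the spectral-gap discussion above is made arbitrarily small by taking the rescaling parameter $t$ large. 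The commutator terms $[\,p_{F(\widetilde D)},\Psi_T\,]$ are small because $p_{F(\widetilde D)}$ has finite propagation (bounded by a multiple of $N_F$), while $\Psi_T$ varies slowly — $\|[\,p_{F(\widetilde D)},\Psi_T\,]\|_{\op} \leq N_F \cdot \sup\|\nabla\Psi_T\|$, which is a fixed small constant once $\Psi$ is chosen with gentle slope. So the recipe is: first pick $\Psi$ with $\|\nabla\Psi\|$ small enough that all commutator contributions are $< 1/4$; then pick the rescaling of $F$ large enough (equivalently, $F$ close enough to $\sgn$ in the relevant functional-calculus sense) that the ``square minus itself'' contributions localized near the collar are $< 1/4$; then $T$ can be taken to be anything (say $T=0$), since $\Psi_T$ only matters through its support geometry and slope, both of which are $T$-independent.

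The main obstacle I anticipate is making precise the assertion that $\|(p_{F(\widetilde D)}^2 - p_{F(\widetilde D)})\,\Psi_T\|_{\op}$ is small: $p_{F(\widetilde D)}^2 - p_{F(\widetilde D)}$ is \emph{not} small globally — it fails to be small on the interior $\widetilde M$ where $\widetilde D$ need not be invertible — so one genuinely needs a ``localization near the boundary'' argument. The clean way is to use finite propagation: since $F(\widetilde D)$ has propagation $\le N_F$, the operator $(p_{F(\widetilde D)}^2 - p_{F(\widetilde D)})\Psi_T$ only sees $\widetilde D$ within distance $O(N_F)$ of $\supp\Psi_T$, i.e.\ within a fixed-width collar of $\partial\widetilde M$; there the metric is a product and $\widetilde D$ is genuinely invertible with gap $\sigma_0$. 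Formally one replaces $\widetilde D$ near $\supp\Psi_T$ by the invertible cylindrical Dirac operator $\widetilde D_{\text{cyl}}$ on all of $\partial\widetilde M\times\mathbb R$, noting $F(\widetilde D)\Psi_T = F(\widetilde D_{\text{cyl}})\Psi_T$ up to a term of propagation reaching beyond the product region, which vanishes against $\Psi_T$ provided $T$ (or the collar width) exceeds $5N_F$ — this is exactly why the index element naturally lives in $M_2((C^*(\widetilde M_{T+5N_F})^\Gamma)^+)$. Then $\|F(\widetilde D_{\text{cyl}})^2 - 1\|_{\op} \leq \sup_{|x|\ge\sqrt{\sigma_0}}|F(x)^2 - 1|$, which tends to $0$ as $F \to \sgn$, and a short algebraic manipulation of \eqref{eq:idem} converts this into the desired bound on $\|q^2 - q\|_{\op}$.
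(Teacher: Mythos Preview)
Your proposal has several genuine gaps.

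First, you seem to have forgotten that $p_{F(\widetilde D)}$ is an \emph{exact} idempotent by construction (see \eqref{eq:idem}): $p_{F(\widetilde D)}^2 - p_{F(\widetilde D)} = 0$ identically, regardless of whether $\widetilde D$ is invertible. So your concern that ``$p^2 - p$ is not small globally --- it fails to be small on the interior $\widetilde M$'' is misplaced; that term vanishes.

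Second, your commutator strategy does not close. With $p^2=p$ one computes $q^2-q=(p-e)[\Psi_T,p]\Psi_T$, so everything rides on $\|[\Psi_T,p]\|_\op$. Your bound $\|[\Psi_T,p]\|_\op\lesssim (\text{propagation of }p)\cdot\|\nabla\Psi_T\|_\infty$ is fine, but the propagation of $p_{F_t(\widetilde D)}$ is of order $tN_F$ and grows with the rescaling parameter $t$, while $\|\nabla\Psi_T\|_\infty$ is fixed (Definition~\ref{def:cutoff} pins $\Psi$ to decay on an interval of length $1$, so you are not free to ``pick $\Psi$ with $\|\nabla\Psi\|$ small''). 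Thus rescaling $F$ makes the commutator worse, not better, and the two steps of your recipe fight each other.

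Third, your claim that ``$T$ can be taken to be anything (say $T=0$)'' is wrong, and you have $\supp\Psi_T$ and $\supp(1-\Psi_T)$ confused: $\Psi_T\equiv 1$ on all of $\widetilde M_T$, so its support is the whole compact piece plus collar, not a thin strip near the boundary.

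The paper's argument avoids all of this by estimating $\|q-p\|_\op$ directly rather than expanding $q^2-q$. One writes
\[
p_{F(\widetilde D)}-q_{F(\widetilde D),T}=\big(p_{F(\widetilde D)}-\begin{psmallmatrix}1&0\\0&0\end{psmallmatrix}\big)(1-\Psi_T),
\]
which is supported where $1-\Psi_T\neq 0$, i.e.\ deep in the cylindrical end. Choosing $T$ comparable to the propagation of $p_{F(\widetilde D)}$ (specifically $T=5tN_\varphi$ for $F=\varphi_t$), finite propagation lets one replace $\widetilde D$ by the genuine cylinder operator $\widetilde D_{\cyc}$ in this expression. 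Invertibility of $\widetilde D_{\cyc}$ then gives $\|p_{\varphi_t(\widetilde D_{\cyc})}-\begin{psmallmatrix}1&0\\0&0\end{psmallmatrix}\|_\op\to 0$ as $t\to\infty$, hence $\|q-p\|_\op\to 0$; since $p$ is an exact idempotent with uniformly bounded norm, $\|q^2-q\|_\op\to 0$ follows. Your final paragraph gestures toward this idea but localizes to the wrong region and decouples $T$ from the propagation, which is precisely what makes the finite-propagation comparison work.
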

\begin{proof}
	Let  $\varphi$ be a smooth normalizing function whose Fourier transform is compactly supported. For each $t >0$, we define a normalizing  function  $\varphi_t$  by setting $\varphi_t(x)=\varphi(tx)$ for all $x\in \mathbb R$. Let $\widetilde D_{\cyc}$ be the Dirac operator on $\partial\widetilde M\times\R$, where the metric $g_{\partial\widetilde M\times \mathbb R}$ is given by $g_\partial+dt^2$. By the Lichnerowicz formula 
	\[ \widetilde D_{\cyc}^2 = \nabla^\ast \nabla + \frac{\kappa}{4} \]  where $\kappa$ is scalar curvature of the metric $g_{\partial\widetilde M\times \mathbb R}$,  we see that  $\widetilde D_{\cyc}$ is an invertible operator, since the metric $g_\partial$ has positive scalar curvature,
	
	It follows from the Fourier inverse transform formula (cf. line $\eqref{eq:fourier}$) that $\varphi_t(\widetilde D)$ has propagation $\leqslant N_\varphi \cdot t$. Moreover, the operator
	$$p_{\varphi_t(\widetilde D)}-q_{\varphi_t(\widetilde D),5N_\varphi t}=\big(p_{\varphi_t(\widetilde D)}-\begin{psmallmatrix}
	1&0\\0&0
	\end{psmallmatrix}\big)(1-\Psi_{5tN_\varphi })$$
	 on $\widetilde M_\infty$ coincides with the operator 
	 	$$\big(p_{\varphi_t(\widetilde D_{\cyc})}-\begin{psmallmatrix}
	 1&0\\0&0
	 \end{psmallmatrix}\big)(1-\Psi_{5tN_\varphi })$$
	 on $\partial \widetilde M\times\mathbb R$ under the canonical identification between the subspace $\partial\widetilde M\times\R_{\geqslant 0} $ of  $\partial\widetilde M\times\R$ and the cylindrical part $\partial\widetilde M\times\R_{\geqslant 0} $ of $\widetilde M_\infty$. Since $\widetilde D_{\cyc}$ is invertible, we have
	 \begin{align*}
	 &\lim_{t\to\infty}\max\{\|1-\varphi_t(\widetilde D_{\cyc})^-\varphi_t(\widetilde D_{\cyc})^+\|_\op,\|1-\varphi_t(\widetilde D_{\cyc})^+\varphi_t(\widetilde D_{\cyc})^-\|_\op \}\\
	 =&\lim_{t\to\infty}\|\varphi_t(\widetilde D_{\cyc})^2-1\|_\op=0.
	 \end{align*}
	Since $\|\varphi_t(\widetilde D_{\cyc})^\pm\|_\op\leqslant 1$ for all $t>0$, we have
	\begin{equation}
	\lim_{t\to\infty}\|p_{\varphi_t(\widetilde D)}-q_{\varphi_t(\widetilde D),5tN_\varphi}\|_\op=0.
	\end{equation}
	Moreover, we have $\|p_{\varphi_t(\widetilde D)}\|_\op\leqslant 64$  for all $t>0$ and $p_{\varphi_t(\widetilde D)}^2 = p_{\varphi_t(\widetilde D)}$. It follows that 
	\begin{equation}
	\lim_{t\to\infty}\|q_{\varphi_t(\widetilde D),5N_\varphi t}^2-q_{\varphi_t(\widetilde D),5tN_\varphi}\|_\op=0.
	\end{equation}
	Now the proof is finished by setting $F = \varphi_t$ and $T = 5tN_\varphi$ for some sufficiently large $t\gg  0$. 
\end{proof}

Let $\Theta$ be the function on the complex plane defined by
$$\Theta(z)=\begin{cases}
0& \mathrm{Re}z\leqslant 1/2,\\
1& \mathrm{Re}z>1/2.
\end{cases}$$
Let $q_{F(\widetilde D),T}$ be the operator constructed in the above lemma. It follows that $\Theta$ is holomorphic on the spectrum of $q_{F(\widetilde D),T}$. Therefore $\Theta(q_{F(\widetilde D),T})$ is an idempotent in $M_2((C^*(\widetilde M_{T+5N_F})^\Gamma)^+)$ and $\Theta(q_{F(\widetilde D),T})-\begin{psmallmatrix}
1&0\\0&0
\end{psmallmatrix}$ lies in $M_2(C^*(\widetilde M_{T+5N_F})^\Gamma)$.
\begin{definition}\label{def:evenindex}
	The higher index $\ind^\Gamma(D)$ of $\widetilde D$ on $\widetilde M_\infty$ is defined to be 
	\begin{equation}
		[\Theta(q_{F(\widetilde D),T})]-[\begin{psmallmatrix}1&0\\0&0\end{psmallmatrix}] \in K_0(C^*(\widetilde M_{T+5N_F})^\Gamma) \cong  K_0(C_r^*(\Gamma)).
	\end{equation}
\end{definition}

The higher index $\ind^\Gamma(D)$ is independent of the choice of $T$ and $F$, as long as the inequality $\eqref{eq:quasi-idem}$ in Lemma \ref{lemma:q^2-q} is satisfied.

\subsection{Odd dimensional case}\label{subsec:oddindex}
Let $\dim M$ be odd and $\widetilde D$ the associated Dirac operator on $\widetilde M_\infty$. 

Consider the function $P=(F+1)/2$ where  $F$ is a normalizing function. More precisely, we assume $P$ is a continuous function on $\R$ satisfying
\begin{enumerate}
	[label=(P\arabic*)]
	\item $\lim_{x\to-\infty}P(x)=0$ and $\lim_{x\to+\infty} P(x)=1;$
	\item and the Fourier transform of $P'$ is supported on the interval  $[-N_P,N_P]$ for some $N_P>0$.
\end{enumerate}
Again, it follows  the Fourier inverse transform formula (cf. line $\eqref{eq:fourier}$) that $P(\widetilde D)$ has propagation no more than $N_P$.

We define 
\begin{equation}\label{eq:f_n}
\begin{split}
f_n(x)=&\sum_{k=1}^n\frac{(2\pi i )^k}{k!}x^k-\Big(\sum_{k=1}^n\frac{(2\pi i)^k}{k!}\Big)x\\
=&\Big(\sum_{k=1}^n\frac{(2\pi i)^k}{k!}\Big)(x^2-x)+\sum_{k=1}^n\frac{(2\pi i)^k}{k!}\sum_{j=0}^{k-2}x^j(x^2-x),\\
\end{split}
\end{equation}
which satisfies
\begin{equation}\label{eq:e^{}-fn}
e^{2\pi ix}-(1+f_n(x))=\sum_{k=n+1}^\infty \frac{(2\pi i)^k}{k!}x^k+\Big(\sum_{k=1}^n\frac{(2\pi i)^k}{k!}\Big)x.
\end{equation}
Note that $f_n(P(\widetilde D))$ is locally compact and has propagation $\leqslant n\cdot N_P$.

\begin{lemma}\label{lemma:fninvertible}
	With the same notation as above, there exist $n >0$, $T >0$ and a continuous function $P$ satisfying conditions \textup{(P1)-(P2)} above such that 
	\begin{equation}\label{eq:inv}
\|e^{2\pi iP(\widetilde D)}-\big(1+f_n(P(\widetilde D))\Psi_T\big)\|_\op<1.
	\end{equation}
	where $\Psi_T$ is a compactly supported smooth function on $\widetilde M_\infty$ as Definition \ref{def:cutoff}.  
\end{lemma}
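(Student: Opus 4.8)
The plan is to mirror the even-dimensional argument in Lemma~\ref{lemma:q^2-q}, replacing the idempotent $p_{F(\widetilde D)}$ by the unitary $e^{2\pi i P(\widetilde D)}$ and exploiting the invertibility of the cylindrical Dirac operator $\widetilde D_{\cyc}$ on $\partial\widetilde M\times\R$. First I would fix a continuous function $P$ of the form $P=(F+1)/2$ with $F$ a smooth normalizing function whose Fourier transform is compactly supported, and then rescale it: set $P_t(x)=P(tx)$, so $P_t(\widetilde D)$ has propagation $\leqslant N_{P}\cdot t$, and accordingly set $T=c\,N_P\,t$ for a suitable constant $c$ (e.g.\ $c=5n$, enough room for all the relevant propagations). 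The key point is that on the cylindrical region $\partial\widetilde M\times\R_{\geqslant 0}$, under the canonical identification with the cylindrical part of $\widetilde M_\infty$, the operator
\[
\big(e^{2\pi i P_t(\widetilde D)}-1-f_n(P_t(\widetilde D))\big)(1-\Psi_T)
\]
agrees with the corresponding operator built from $\widetilde D_{\cyc}$, and since $\widetilde D_{\cyc}$ is invertible by the Lichnerowicz formula (positive scalar curvature near the boundary), $\lim_{t\to\infty}\|P_t(\widetilde D_{\cyc})^2-P_t(\widetilde D_{\cyc})\|_\op=0$; hence $e^{2\pi i P_t(\widetilde D_{\cyc})}\to 1$ and $f_n(P_t(\widetilde D_{\cyc}))\to 0$ in operator norm as $t\to\infty$.

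The second step is the estimate proper. Using $e^{2\pi i x}=1+f_n(x)+\big(e^{2\pi i x}-1-f_n(x)\big)$ and inserting $\Psi_T$, I would write
\[
e^{2\pi i P_t(\widetilde D)}-\big(1+f_n(P_t(\widetilde D))\Psi_T\big)
= \big(e^{2\pi i P_t(\widetilde D)}-1-f_n(P_t(\widetilde D))\big) + f_n(P_t(\widetilde D))(1-\Psi_T).
\]
The first term is globally defined on $\widetilde M_\infty$; since $\widetilde D^2\geqslant \kappa_\partial/4>0$ on the cylindrical part while on the compact core $M$ the operator is only bounded below by the relevant constant, one needs the remainder $e^{2\pi i x}-1-f_n(x)=\sum_{k>n}\frac{(2\pi i)^k}{k!}x^k+\big(\sum_{k=1}^n\frac{(2\pi i)^k}{k!}\big)x$ to be controlled: the tail $\sum_{k>n}$ can be made uniformly small on $[-C,C]$ by choosing $n$ large, and the linear term $\big(\sum_{k=1}^n\frac{(2\pi i)^k}{k!}\big)x$ applied to $P_t(\widetilde D)-P_t(\widetilde D)^{\ldots}$ is handled because $P_t(\widetilde D)$ is close to an idempotent away from a compact set. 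More cleanly, I would run the argument as in Lemma~\ref{lemma:q^2-q}: the second term $f_n(P_t(\widetilde D))(1-\Psi_T)$ is supported on the cylinder and equals $f_n(P_t(\widetilde D_{\cyc}))(1-\Psi_T)$ there, whose norm is $\leqslant \|f_n(P_t(\widetilde D_{\cyc}))\|_\op\to 0$; for the first term, one observes it is a fixed bounded function of $P_t(\widetilde D)$ and again splits $\widetilde M_\infty$ into the compact core and the cylinder, using invertibility of $\widetilde D_{\cyc}$ to kill the cylindrical contribution and using that $f_n$ was designed so that $1+f_n(P(\widetilde D))$ approximates $e^{2\pi i P(\widetilde D)}$. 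Combining, for $t\gg 0$ the total operator norm drops below $1$; set $P=P_t$, $T=5nN_{P_t}$ for such a $t$.

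The main obstacle, as in the even-dimensional case, is bookkeeping the supports and propagations so that the splitting $\widetilde M_\infty = M \cup (\partial\widetilde M\times\R_{\geqslant 0})$ is compatible with both cut-off functions $\Psi_T$ and the finite-propagation operators $P_t(\widetilde D)$, $f_n(P_t(\widetilde D))$, $e^{2\pi i P_t(\widetilde D)}$ — one must take $T$ large enough (relative to $n\cdot N_{P_t}$) that all these operators, when restricted to the region where $1-\Psi_T\neq 0$, see only the product geometry of the cylinder and hence literally coincide with the operators built from $\widetilde D_{\cyc}$. A secondary subtlety is that $e^{2\pi i P_t(\widetilde D)}$ itself has infinite propagation, so one cannot assert it is supported on $\widetilde M_{T+\text{const}}$; this is exactly why the statement is phrased as an operator-norm approximation by the finite-propagation element $1+f_n(P_t(\widetilde D))\Psi_T$ rather than an equality, and the invertibility conclusion for $1+f_n(P_t(\widetilde D))\Psi_T$ (needed to define the odd higher index) then follows from $\|e^{2\pi i P_t(\widetilde D)}-\big(1+f_n(P_t(\widetilde D))\Psi_T\big)\|_\op<1$ together with the invertibility of the unitary $e^{2\pi i P_t(\widetilde D)}$.
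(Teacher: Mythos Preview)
Your overall strategy—the same decomposition
\[
e^{2\pi i P_t(\widetilde D)}-\bigl(1+f_n(P_t(\widetilde D))\Psi_T\bigr)
= \bigl(e^{2\pi i P_t(\widetilde D)}-1-f_n(P_t(\widetilde D))\bigr) + f_n(P_t(\widetilde D))(1-\Psi_T),
\]
the propagation bookkeeping, and the identification of the second summand with the corresponding cylinder operator—is exactly the paper's approach, and your treatment of the second term is correct.

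There is, however, a genuine gap in your handling of the first summand. You propose to control the linear piece $\bigl(\sum_{k=1}^n\frac{(2\pi i)^k}{k!}\bigr)P_t(\widetilde D)$ by using that ``$P_t(\widetilde D)$ is close to an idempotent away from a compact set'', and then to ``split $\widetilde M_\infty$ into the compact core and the cylinder'' for this first term as well. Neither step works. On the compact core $\widetilde M$ the operator $\widetilde D$ is generally \emph{not} invertible, so $P_t(\widetilde D)$ is not close to an idempotent there and the linear term is not small by that mechanism; and the first summand has infinite propagation (from the exponential), so it cannot be localized to a region via a cut-off argument in the way the second summand can.

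The point you are missing is purely algebraic: the coefficient $\sum_{k=1}^n\frac{(2\pi i)^k}{k!}$ converges to $e^{2\pi i}-1=0$ as $n\to\infty$. Thus the entire scalar function $x\mapsto e^{2\pi i x}-1-f_n(x)$ is uniformly small on $[0,1]$ for $n$ large (both the tail $\sum_{k>n}$ and the linear term have small coefficients). Since $\|P_t(\widetilde D)\|_\op\leqslant 1$ for every $t$, this yields the \emph{global}, $t$-independent bound
\[
\bigl\|e^{2\pi i P_t(\widetilde D)}-1-f_n(P_t(\widetilde D))\bigr\|_\op<\tfrac13
\]
once $n$ is chosen large enough—no region-splitting and no invertibility on the core required. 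The paper then bounds the second summand by $\|f_n(P_t(\widetilde D_{\cyc}))\|_\op\leqslant \|1+f_n(P_t(\widetilde D_{\cyc}))-e^{2\pi i P_t(\widetilde D_{\cyc})}\|_\op+\|e^{2\pi i P_t(\widetilde D_{\cyc})}-1\|_\op<\tfrac13+\tfrac13$, the last $\tfrac13$ coming from invertibility of $\widetilde D_{\cyc}$ for $t$ large. With this correction your proof goes through and coincides with the paper's.
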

\begin{proof}
Let $\varphi$ be a smooth function on $\mathbb R$ satisfying conditions \textup{(P1)-(P2)}. Define the function $\varphi_t$ by setting $\varphi_t(x)=\varphi(tx)$. 

By the Fourier inverse transform formula (cf. line $\eqref{eq:fourier}$), $\varphi_t(\widetilde D)$ has propagation $\leqslant tN_{\varphi}$. Hence $f_n(\varphi_t(\widetilde D))$ has propagation $\leqslant n tN_{\varphi}$. Moreover, the operator  $f_n(\varphi_t(\widetilde D))(1-\Psi_{ntN_{\varphi}})$ on $\widetilde M_\infty$ coincides with the operator  $f_n(\varphi_t(\widetilde D_{\cyc}))(1-\Psi_{ntN_{\varphi}})$ on $\partial \widetilde M\times\mathbb R$ under the canonical identification between the subspace $\partial\widetilde M\times\R_{\geqslant 0} $ of  $\partial\widetilde M\times\R$ and the cylindrical part $\partial\widetilde M\times\R_{\geqslant 0} $ of $\widetilde M_\infty$.
	
	Since $\|\varphi_t(\widetilde D)\|_\op\leqslant 1$ and $\|\varphi_t(\widetilde D_{\cyc})\|_\op\leqslant 1$, we have for any $t>0$
	$$\|e^{2\pi i\varphi_t(\widetilde D)}-\big(1+f_n(\varphi_t(\widetilde D)) 
	\big)\|_\op<\frac 1 3
	\text{ and }\|e^{2\pi i\varphi_t(\widetilde D_{\cyc})}-\big(1+f_n(\varphi_t(\widetilde D_{\cyc}))\big)\|_\op<\frac 1 3
	$$
	when $n$ is sufficiently large.
	
	As $\widetilde D_{\cyc}$ is invertible, we also have 
	$$
	\|e^{2\pi \varphi_t(\widetilde D_{\cyc})}-1\|_\op<\frac{1}{3}.
	$$
	as long as $t$ is sufficiently large. 
	Therefore, when both $n$ and $t$ are sufficiently large, we have 
	\begin{align*}
	&\|e^{2\pi i \varphi_t(\widetilde D)}-\big(1+f_n(\varphi_t(\widetilde D))\Psi_{n tN_\varphi}\big)\|_\op \\
	<&	\|e^{2\pi i \varphi_t(\widetilde D)}-\big(1+f_n(\varphi_t(\widetilde D))\big)\|_\op+\|f_n(\varphi_t(\widetilde D))(1-\Psi_{ntN_\varphi  })\|_\op\\
	=&	\|e^{2\pi i \varphi_t(\widetilde D)}- 1 - f_n(\varphi_t(\widetilde D))\|_\op+\|f_n(\varphi_t(\widetilde D_{\cyc}))(1-\Psi_{nt N_\varphi})\|_\op\\
	<&	\|e^{2\pi i \varphi_t(\widetilde D)}-1 - f_n(\varphi_t(\widetilde D))\|_\op\\
	& \quad  +\|1 + f_n(\varphi_t(\widetilde D_{\cyc}))-e^{2\pi i\varphi_t(\widetilde D_{\cyc})}\|_\op  +\|e^{2\pi i \varphi_t(\widetilde D_{\cyc})}-1\|_\op<1.
	\end{align*}	
Now the proof is finished by setting $P = \varphi_t$ and $T = n N_\varphi \cdot t$ for some sufficiently large $n$ and $t$. 
\end{proof}

Now let $(P,n,T)$ be the triple chosen as in Lemma \ref{lemma:fninvertible} above so that the inequality $\eqref{eq:inv}$ is satisfied. Note that $\|e^{2\pi iP(\widetilde D)}\|_\op = 1$.  It follows that $1+f_n(P(\widetilde D))\Psi_T$  is invertible in $(C^*(\widetilde M_{\infty})^\Gamma)^+$. Moreover, $1+f_n(P(\widetilde D))\Psi_T$ lies in $(C^*(\widetilde M_{T+nN_P})^\Gamma)^+$ by construction, hence is also invertible in $(C^*(\widetilde M_{T+nN_P})^\Gamma)^+$, since $(C^*(\widetilde M_{T+nN_P})^\Gamma)^+$ is a $C^\ast$-subalgebra of $(C^*(\widetilde M_{\infty})^\Gamma)^+$.

\begin{definition}\label{def:oddindex}
	The higher index $\ind^\Gamma(D)$ of $\widetilde D$ is defined to be 
	\[ [1+f_n(P(\widetilde D))\Psi_T] \in 
 K_1(C^*(\widetilde M_{T+nN_P})^\Gamma) \cong K_1(C_r^*(\Gamma)). \]
\end{definition}

The higher index $\ind^\Gamma(D)$ is independent of the choice of $(P,n,T)$, as long as the inequality $\eqref{eq:inv}$ in  Lemma \ref{lemma:fninvertible} is satisfied.

\section{Sufficiently large spectral gap on the boundary and $l^1$-index of Dirac operator}\label{sec:2}
In this section, we construct the $l^1$-indices of Dirac operators  on spin manifolds whose scalar curvatures are sufficiently large on their boundaries.  As a consequence, we prove Theorem $\ref{thm:main}$, which is in fact a slightly improved version of Theorem \ref{theorem:main}.

Let us assume the same notation as in the Geometric Setup $\ref{geoset}$. Let $\mathcal F_\partial$ be a fundamental domain of the $\Gamma$-action on $\partial\widetilde M$. Fix a finite symmetric generating set $S$ of $\Gamma$. Let $\ell$ be the length function on $\Gamma$ induced by $S$. We define 
\begin{equation}
	\tau_\partial=\liminf_{\ell(\gamma)\to\infty}\sup_{x\in\mathcal F_\partial}\frac{\dist(x,\gamma x)}{\ell(\gamma)}
\end{equation}
and
\begin{equation}\label{eq:taupartial}
	K_\Gamma=\inf\{K:\exists C>0 \text{ s.t. }\#\{\gamma\in\Gamma:\ell(\gamma)\leqslant n\}\leqslant Ce^{Kn} \}.
\end{equation} 

\begin{theorem}\label{thm:main}
With the same notation as in the Geometric Setup $\ref{geoset}$, if there exists a constant $K\geqslant 0$ such that the scalar curvature $\kappa$ of the metric $g_\partial$ on $\partial M$ satisfies 
\begin{equation}\label{eq:scbd}
\inf_{x\in\partial M}\kappa(x)>\frac{16(K_\Gamma+K)^2}{\tau^2_\partial},
\end{equation}
then the higher index $\ind^\Gamma(D)$ of the Dirac operator $\widetilde D$  on $\widetilde M_\infty$ admits a natural preimage $\ind^\Gamma_{1,K}(D)$ in $K_n(l^1_K(\Gamma))$ under the inclusion 
\[ l^1_K(\Gamma) \hookrightarrow C^*_r(\Gamma). \] 
 where the Banach algebra $l^1_K(\Gamma)$ is defined in  Definition $\ref{def:konenorm}$. 
\end{theorem}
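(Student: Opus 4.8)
The plan is to carry out, in the $l^{1}$-setting, the construction of the higher index recalled in Section~\ref{sec:1}, replacing the operator-norm estimates there by $\|\cdot\|_{1,K}$-estimates that are available on the cylindrical end thanks to the hypothesis \eqref{eq:scbd}. Observe first that, with the product metric $g_\partial+dt^{2}$ on $\partial\widetilde M\times\R$, the Lichnerowicz formula $\widetilde D_{\cyc}^{2}=\nabla^{\ast}\nabla+\tfrac{\kappa}{4}$ together with \eqref{eq:scbd} gives $|\widetilde D_{\cyc}|>\tfrac{2(K_\Gamma+K)}{\tau_\partial}$; moreover the displacement rate of the $\Gamma$-action on $\partial\widetilde M\times\R$ with respect to the fundamental domain $\mathcal F_\partial\times\R$ is exactly $\tau_\partial$, and $K_\Gamma$ is unchanged. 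Consequently Theorem~\ref{thm:vanishing} and all the $\|\cdot\|_{1,K}$-norm estimates of Lemmas~\ref{lemma:Fourier}, \ref{lemma:largespectralgap}, \ref{lemma:G_tfinite}, \ref{lemma:G-sgn} and \ref{lemma:local_appro} apply verbatim to $\widetilde D_{\cyc}$ on $\partial\widetilde M\times\R$ with the weighted norm $\|\cdot\|_{1,K}$.

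Even-dimensional case. I would first fix $t\gg 0$ so that, by Lemma~\ref{lemma:G-sgn} applied to $\widetilde D_{\cyc}$, the quantity $\|G_{t}(\widetilde D_{\cyc})-\sgn(\widetilde D_{\cyc})\|_{1,K}$ (hence $\|1-G_{t}(\widetilde D_{\cyc})^{2}\|_{1,K}$) is as small as desired, while $\|G_{t}(\widetilde D_{\cyc})\|_{1,K}$ stays bounded; here the large spectral gap is precisely what makes both statements hold. Next, splitting $\widehat{G}$ into a low- and a high-frequency part as in the proof of Lemma~\ref{lemma:G_tfinite}, I would replace $G_{t}$ by a smooth odd function $F$ with $\widehat F$ compactly supported (so that $F(\widetilde D)$ has finite propagation), $F(x)\to\pm1$ as $x\to\pm\infty$, and with $\|(F-G_{t})(\widetilde D_{\cyc})\|_{1,K}$ as small as desired (this uses only the Gaussian decay of the truncated tail of $\widehat{G}$ together with Lemma~\ref{lemma:Fourier}). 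With this $F$ one forms $p_{F(\widetilde D)}$ and $q_{F(\widetilde D),T}$ as in \eqref{eq:idem} and \eqref{eq:qFDT} (the formula \eqref{eq:idem} produces an idempotent for any $F(\widetilde D)$, and $p_{F(\widetilde D)}-\begin{psmallmatrix}1&0\\0&0\end{psmallmatrix}$ is locally compact since $1-F^{2}\in C_{0}(\R)$); as $F$ is band-limited this difference also has finite propagation, so $q_{F(\widetilde D),T}$ lies in $M_{2}\big((\cB(\widetilde M_{s})^{\Gamma}_{K})^{+}\big)$ for a finite $s$, where $\cB(\widetilde M_{s})^{\Gamma}_{K}$ is the $\|\cdot\|_{1,K}$-completion of the operators of $\cB(\widetilde M_\infty)^{\Gamma}$ supported on $\widetilde M_{s}$ (compare Definitions~\ref{def:supporton} and \ref{def:konenorm}). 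The crucial point is that $q_{F(\widetilde D),T}^{2}-q_{F(\widetilde D),T}$ is supported on the transition region $\partial\widetilde M\times[T-c,T+c]$ (a region of fixed size $c\sim N_{F}$, translated along the cylinder, with cocompact $\Gamma$-action), where, by finite propagation and the product structure, $p_{F(\widetilde D)}$ coincides with $p_{F(\widetilde D_{\cyc})}$; and $p_{F(\widetilde D_{\cyc})}-\begin{psmallmatrix}1&0\\0&0\end{psmallmatrix}$ is a polynomial with no constant term in the entries of $1-F(\widetilde D_{\cyc})^{2}$, with $\|\cdot\|_{1,K}$-bounded coefficients (powers of $F(\widetilde D_{\cyc})$). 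Hence $\|q_{F(\widetilde D),T}^{2}-q_{F(\widetilde D),T}\|_{1,K}<1/2$ once $t$ is large, the $F$-approximation is good, and $T>c+1$. Then $\Theta$ (the holomorphic function of Section~\ref{subsec:evenindex}) is holomorphic on the spectrum of $q_{F(\widetilde D),T}$ in this Banach algebra, and I would set
$$\ind^{\Gamma}_{1,K}(D)\coloneqq\big[\Theta(q_{F(\widetilde D),T})\big]-\big[\begin{psmallmatrix}1&0\\0&0\end{psmallmatrix}\big]\in K_{0}(\cB(\widetilde M_{s})^{\Gamma}_{K})\cong K_{0}(l^{1}_{K}(\Gamma)),$$
where the isomorphism is the $l^{1}_{K}$-version of the stability Proposition~\ref{lemma:stable}, valid because the $\Gamma$-action on the finite piece $\widetilde M_{s}$ is cocompact so that $\cB(\widetilde M_{s})^{\Gamma}_{K}\cong\cK\otimes l^{1}_{K}(\Gamma)$.

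Odd-dimensional case and compatibility. The odd case is handled in the same way, adapting Lemma~\ref{lemma:fninvertible}: one uses $P_{t}=(G_{t}+1)/2$ in place of $G_{t}$, a band-limited approximation $P$ of it, and the polynomial $f_{n}$ of \eqref{eq:f_n}, which is divisible by $x^{2}-x$; then $f_{n}(P(\widetilde D))(1-\Psi_{T})=f_{n}(P(\widetilde D_{\cyc}))(1-\Psi_{T})$ has small $\|\cdot\|_{1,K}$-norm (the entries of $P(\widetilde D_{\cyc})^{2}-P(\widetilde D_{\cyc})$ being $\|\cdot\|_{1,K}$-small and all relevant operators $\|\cdot\|_{1,K}$-bounded, again by the large-gap estimates), which forces $1+f_{n}(P(\widetilde D))\Psi_{T}$ to be invertible in $(\cB(\widetilde M_{s})^{\Gamma}_{K})^{+}$ by an approximate-inverse/Neumann-series argument localized on the cylinder, and its class defines $\ind^{\Gamma}_{1,K}(D)\in K_{1}(l^{1}_{K}(\Gamma))$. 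Finally, the contractive inclusion $\cB(\widetilde M_{s})^{\Gamma}_{K}\hookrightarrow C^{\ast}(\widetilde M_{s})^{\Gamma}$ (as $\|\cdot\|_\op\le\|\cdot\|_{1,K}$) sends $\ind^{\Gamma}_{1,K}(D)$ to the class of $\Theta(q_{F(\widetilde D),T})$ (resp.\ of $1+f_{n}(P(\widetilde D))\Psi_{T}$) computed in the $C^{\ast}$-algebra; since $\|q_{F(\widetilde D),T}^{2}-q_{F(\widetilde D),T}\|_\op\le\|q_{F(\widetilde D),T}^{2}-q_{F(\widetilde D),T}\|_{1,K}<1/2$, this datum is admissible for Definitions~\ref{def:evenindex} and \ref{def:oddindex}, and by the independence of the $C^{\ast}$-index on the choices its image equals $\ind^{\Gamma}(D)$ under $K_{\ast}(l^{1}_{K}(\Gamma))\to K_{\ast}(C^{\ast}_{r}(\Gamma))$, which is the assertion of the theorem.

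I expect the main obstacle to be the $\|\cdot\|_{1,K}$-quasi-idempotent estimate (and its odd analogue): one must keep $F$ band-limited, so that the class lands in $\cB(\widetilde M_{s})^{\Gamma}_{K}$ with $s<\infty$ (which has the correct $K$-theory, rather than in $\cB(\widetilde M_{\infty})^{\Gamma}_{K}$, whose $K$-theory vanishes by an Eilenberg swindle along the cylinder), while simultaneously keeping $\|1-F(\widetilde D_{\cyc})^{2}\|_{1,K}$ small and $\|F(\widetilde D_{\cyc})\|_{1,K}$ bounded. This is what forces the two-step approximation $\sgn\rightsquigarrow G_{t}\rightsquigarrow F$, and it must be combined with a careful bookkeeping of the (cocompact) support of the quasi-idempotent defect and of the elementary estimate showing that the defect is controlled purely by quantities on $\partial\widetilde M\times\R$. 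The remaining ingredients are either routine or imported directly from Sections~\ref{sec:indexclosed} and \ref{sec:1}.
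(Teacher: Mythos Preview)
Your even-dimensional argument is essentially the paper's: construct a band-limited approximant $F$ of $G_t$, use the spectral-gap estimates on $\partial\widetilde M\times\R$ to make $\|1-F(\widetilde D_{\cyc})^{2}\|_{1,K}$ small while keeping $\|F(\widetilde D_{\cyc})\|_{1,K}$ bounded, and observe that the defect $q^{2}-q$ lives entirely on the cylinder where it is dominated by $\|p_{F(\widetilde D_{\cyc})}-\begin{psmallmatrix}1&0\\0&0\end{psmallmatrix}\|_{1,K}$. This is exactly Claim~\ref{prop:q^2-q 1-norm} and its proof.

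Your odd-dimensional sketch, however, glosses over the one genuinely new difficulty, and the phrase ``approximate-inverse/Neumann-series argument localized on the cylinder'' is where it hides. Knowing that $\|f_{n}(P(\widetilde D))(1-\Psi_{T})\|_{1,K}$ is small only tells you that $1+f_{n}(P(\widetilde D))\Psi_{T}$ is close to $1+f_{n}(P(\widetilde D))$; it does \emph{not} by itself force invertibility. The $C^{\ast}$-proof of Lemma~\ref{lemma:fninvertible} works because $\|e^{2\pi iP(\widetilde D)}\|_{\op}=1$, but in the weighted $l^{1}$-norm $\|e^{2\pi iP(\widetilde D)}\|_{1,K}$ can be enormous (of order $e^{2\pi\|P(\widetilde D)\|_{1,K}}$, and $\|P(\widetilde D)\|_{1,K}$ is \emph{not} uniformly bounded in $t$ on $\widetilde M_{\infty}$ since $\widetilde D$ has no gap there). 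The paper is explicit about this obstruction and circumvents it by exhibiting a concrete two-sided approximate inverse, namely $1+f_{n}(-P(\widetilde D))\Psi_{T}$, and proving directly that both products with $1+f_{n}(P(\widetilde D))\Psi_{T}$ are within $1$ of the identity in $\|\cdot\|_{1,K}$ (Claim~\ref{prop:invertible}). That estimate splits into a cylinder piece (small by the gap, with $t\geqslant t_{\cyc}$ fixed first) and a global piece on $\widetilde M_{\infty}$, where one uses only that $\|P(\widetilde D)\|_{1,K}<\infty$ for the \emph{fixed} $t_{\cyc}$ and then chooses $n$ large enough relative to that bound. The order of quantifiers --- fix $t$ from the boundary, then pick $n$ from the interior --- is the content you are missing, and it is not ``localized on the cylinder''.
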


By the Lichnerowicz formula 
\[ \widetilde D_{\cyc}^2 = \nabla^\ast \nabla + \frac{\kappa}{4}, \] 
the lower bound on $\kappa$ in line $\eqref{eq:scbd}$ implies that
$$|\widetilde D_{\cyc}|>\frac{2(K_\Gamma+K)}{\tau_\partial},$$
where $\widetilde D_{\cyc}$ is the Dirac operator on $\partial \widetilde M\times \mathbb R$ equipped with the metric $\widetilde g_\partial + dt^2$. 

As in Definition \ref{def:Roe} and Definition \ref{def:konenorm}, we define the Banach algebras $\cB(\widetilde M_\infty)^\Gamma_K$.
Although the $\Gamma$-action on $\widetilde M_\infty$ is not cocompact, the definitions make sense due to the following observations.
\begin{enumerate}
	\item To define the $l^1$-norm on $\C(\widetilde X\times\R)^\Gamma$, we should use a fundamental domain $\mathcal F_\infty\subset \widetilde M_\infty$ such that
	\begin{itemize}
		\item $\mathcal F_\infty\cap \widetilde M$ is a precompact fundamental domain of the $\Gamma$-action on $\widetilde M$;
		\item $\mathcal F_\infty\cap(\partial\widetilde M\times[0,+\infty))=\mathcal F_\partial\times[0,+\infty)\subset \widetilde M_\infty,$
		where $\mathcal F_\partial$ is a precompact fundamental domain of the $\Gamma$-action on $\partial\widetilde M$.
	\end{itemize}
	\item Given a length function on $\Gamma$, we still have 
	\begin{equation}\label{eq:quasi}
	\dist(x,\gamma x)>\tau'\ell(\gamma)-C_0',~\forall\gamma\in \Gamma,~\forall x\in \mathcal F_\infty
	\end{equation}
	for some positive constants $C_0$ and $\tau$.
\end{enumerate}
Similarly to Definition \ref{def:supporton}, we define $\cB(\widetilde M_s)^\Gamma_K$ to be the operators in $\cB(\widetilde M_\infty)^\Gamma_K$ supported on $\widetilde M_s$. For any $0<s<\infty$, we have 
$$\cB(\widetilde M_s)^\Gamma_K\cong \cK\otimes l^1_K(\Gamma).$$

\subsection{Even dimensional case}\label{subsec:evenl1}

Let us prove  Theorem \ref{thm:main} in the case where $\dim M$ is even.  

We assume the same notation as in the Geometric setup $\ref{geoset}$. In particular, $\widetilde D$ is the Dirac operator on $\widetilde M_\infty$ and $\widetilde D_{\cyc}$ is the Dirac operator on the cylinder $\partial\widetilde M\times \R$.

	Let $q_{F(\widetilde D),T}$ be the element defined in line \eqref{eq:qFDT}, where $T$ is some positive number and $F$ is a smooth normalizing function with $F'$ a Schwartz function. In particular, $\|q_{F(\widetilde D),T}\|_{1, K}$ is finite, since the Fourier transform of $F$ has compact support.

	Similar to the construction of the higher index $\ind^\Gamma(D)\in K_0(C_r^\ast(\Gamma))$ as in Definition $\ref{def:evenindex}$. The proof of Theorem $\ref{thm:main}$ for the even dimensional case can be reduced to the following claim, which is an $l^1$-analogue of Lemma \ref{lemma:q^2-q}. 
	\begin{claim}\label{prop:q^2-q 1-norm}
		If $|\widetilde D_{\cyc}|>\frac{2(K_\Gamma+K)}{\tau_\partial}$, then there exist  a positive number $T$ and  a smooth normalizing function $F$  whose Fourier transform is compactly supported such that  
		$$\|q_{F(\widetilde D),T}^2-q_{F(\widetilde D),T}\|_{1, K}<1/2,$$
			where $\|\cdot\|_{1, K}$ is the weighted $l^1$-norm given in Definition $\ref{def:konenorm}$.
	\end{claim}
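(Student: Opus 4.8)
\textbf{Proof proposal for Claim~\ref{prop:q^2-q 1-norm}.}

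The plan is to run the operator‑norm argument of Lemma~\ref{lemma:q^2-q} with $\|\cdot\|_{\op}$ replaced by $\|\cdot\|_{1,K}$, feeding in the large‑spectral‑gap estimates of Lemma~\ref{lemma:largespectralgap} (together with Lemma~\ref{lemma:G_tfinite} and Lemma~\ref{lemma:G-sgn}) on the cylinder $\partial\widetilde M\times\R$. The starting point is an algebraic rewriting of the defect. Write $F$ for a normalizing function, $p=p_{F(\widetilde D)}$ (an honest idempotent by \eqref{eq:idem}), $e=\begin{psmallmatrix}1&0\\0&0\end{psmallmatrix}$ and $b=p-e$; since $p^2=p$ one gets $b^2=b-be-eb$, and since the cutoff $\Psi_T$ of Definition~\ref{def:cutoff} is $\Gamma$‑invariant and commutes with the constant matrix $e$, a short computation from \eqref{eq:qFDT} gives
\[
q_{F(\widetilde D),T}^{\,2}-q_{F(\widetilde D),T}=b\,[\Psi_T,b]\,\Psi_T+(eb+be-b)\,\Psi_T(1-\Psi_T).
\]
Both $[\Psi_T,b]=[\Psi_T,p]$ and $\Psi_T(1-\Psi_T)$ vanish outside $\partial\widetilde M\times[T,T+1]$, so the whole right‑hand side is concentrated near the slice $\partial\widetilde M\times\{T\}$. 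If $T$ is chosen large compared with the effective spread of $p$, then — exactly as in the proof of Lemma~\ref{lemma:q^2-q}, using the product structure of the metric near the boundary and finite propagation — the defect agrees, up to a controllable remainder, with the same expression built from the cylinder Dirac operator $\widetilde D_{\cyc}$ and its idempotent $p^{\cyc}=p_{F(\widetilde D_{\cyc})}$, with $b^{\cyc}=p^{\cyc}-e$.

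The crux is then to show $\|b^{\cyc}\|_{1,K}\to 0$ along a good family of $F$'s; I would take $F=G_t:=G(t\,\cdot)$ with $G$ the error‑function normalizing function \eqref{eq:G(x)}. Every entry of $b^{\cyc}$ is, by \eqref{eq:idem}, a polynomial in $G_t(\widetilde D_{\cyc})^{\pm}$ carrying at least one factor of $1-G_t(\widetilde D_{\cyc})^{\pm}G_t(\widetilde D_{\cyc})^{\mp}$, i.e.\ of the block operator $(1-G_t^2)(\widetilde D_{\cyc})$; since $\|G_t(\widetilde D_{\cyc})\|_{1,K}$ stays bounded (Lemma~\ref{lemma:G_tfinite} and Lemma~\ref{lemma:G-sgn}, whose proofs apply verbatim on $\partial\widetilde M\times\R$), it suffices to bound $\|(1-G_t^2)(\widetilde D_{\cyc})\|_{1,K}$. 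Now $1-G^2$ and its Fourier transform both have Gaussian decay, with rates whose product is exactly $1/4$, and the Lichnerowicz formula on the positively curved boundary gives $|\widetilde D_{\cyc}|>\tfrac{2(K_\Gamma+K)}{\tau_\partial}=\tfrac{K_\Gamma+K}{\tau_\partial\sqrt{1/4}}$, which is precisely the hypothesis of Lemma~\ref{lemma:largespectralgap} for $f=1-G^2$ — this is where the numerical factor $16$ in \eqref{eq:scbd} enters. Hence $\|(1-G_t^2)(\widetilde D_{\cyc})\|_{1,K}\le\lambda e^{-\varepsilon t^2}$, so $\|b^{\cyc}\|_{1,K}\le C e^{-\varepsilon t^2}$, and then by submultiplicativity of $\|\cdot\|_{1,K}$ together with $\|\Psi_T\|,\|\Psi_T(1-\Psi_T)\|\le 1$ the cylinder part of the defect has norm $\le 2\|b^{\cyc}\|_{1,K}^2+3\|b^{\cyc}\|_{1,K}\to 0$. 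The remainder dropped above — the portion of the defect reaching the non‑invertible interior $\widetilde M$ — is governed by the Gaussian tail of the kernel of $G_t(\widetilde D)$ over distance $\sim T$, of size $\lesssim e^{-T^2/4t^2}$, and its $\|\cdot\|_{1,K}$‑norm is $\lesssim e^{-T^2/4t^2}e^{(K+K_\Gamma)T/\tau_\partial}$; taking $T=C_1t^2$ with $C_1>4(K+K_\Gamma)/\tau_\partial$ kills it. Thus for $t$ large, $\|q_{G_t(\widetilde D),T}^{\,2}-q_{G_t(\widetilde D),T}\|_{1,K}<\tfrac14$.

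It remains to replace $G_t$, whose Fourier transform is not compactly supported, by a function with compactly supported Fourier transform. For this I would smoothly truncate $\widehat{G_t}$ to $[-N,N]$; the resulting $F$ is band‑limited, smooth, odd, and still tends to $\pm1$ at $\pm\infty$ (the removed tail of $\widehat{G_t}$ is $L^1$ with Gaussian decay, so $F-G_t$ is continuous and vanishes at infinity), hence is, up to an inessential adjustment, a normalizing function. By Lemma~\ref{lemma:Fourier}, $\|F(\widetilde D)-G_t(\widetilde D)\|_{1,K}\to 0$ as $N\to\infty$ for fixed $t$, and since $q_{F(\widetilde D),T}^{\,2}-q_{F(\widetilde D),T}$ depends polynomially — hence $\|\cdot\|_{1,K}$‑continuously — on $F(\widetilde D)$, one obtains $\|q_{F(\widetilde D),T}^{\,2}-q_{F(\widetilde D),T}\|_{1,K}<\tfrac12$ for $t$, and then $N$, sufficiently large. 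This proves the Claim with this $F$ and $T=C_1t^2$.

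The main obstacle is the middle step: showing that the defect, although it is an operator whose propagation (and whose support along the cylinder) grows with $t$, nonetheless has small $\|\cdot\|_{1,K}$‑norm. A purely finite‑propagation plus operator‑norm estimate fails here (the bound would blow up like $N^{-1}e^{cN}$), which forces one to work with the Gaussian functional calculus rather than directly with a compactly‑supported‑Fourier‑transform normalizing function, and to balance carefully the two scales $T\gg t$ so that the uncontrolled contribution from the interior, where $\widetilde D$ is not invertible, is exponentially suppressed. The passage to a band‑limited $F$ at the end is then only a perturbation argument, and the exact appearance of the constant $16$ is an artifact of the optimal Gaussian rates in Lemma~\ref{lemma:largespectralgap}.
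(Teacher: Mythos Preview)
Your proposal is essentially correct and rests on the same core mechanism as the paper's proof: use the large spectral gap of $\widetilde D_{\cyc}$ to make $\|p_{F(\widetilde D_{\cyc})}-e\|_{1,K}$ small via the Gaussian normalizing function $G_t$, and produce the required compactly supported Fourier transform by truncating $\widehat{G_t}$. Your algebraic rewriting of the defect in terms of $[\Psi_T,b]$ and $\Psi_T(1-\Psi_T)$ is correct and a nice alternative to the paper's $p-q$ expansion; the invocation of Lemma~\ref{lemma:largespectralgap} on $1-G^2$ (or, equivalently, of Lemma~\ref{lemma:G-sgn}) to get $\|b^{\cyc}\|_{1,K}\to 0$ is exactly the heart of the matter, and your explanation of where the constant $16$ enters is on the mark.

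The one substantive difference is the \emph{order} in which you truncate. The paper defines the band-limited function $F_{a,t}$ (the truncation of $\widehat{G_t}$ to $[-2at^2,2at^2]$) \emph{first}; then $p_{F_{a,t}(\widetilde D)}$ has genuine finite propagation $\le 12at^2$, so with $T=24at^2$ the defect on $\widetilde M_\infty$ \emph{exactly} equals the corresponding defect on the cylinder --- no remainder arises. You instead work with $G_t$ first and truncate at the end, which forces the extra ``remainder'' step comparing $G_t(\widetilde D)$ with $G_t(\widetilde D_{\cyc})$ near the slice $\{T\}$. That step is workable, but your sketch understates its cost: on $\widetilde M_\infty$ the operator $\widetilde D$ is not invertible, so $\|G_t(\widetilde D)\|_{1,K}$ (hence $\|b\|_{1,K}$) is only bounded by something of order $e^{c\,t^2}$, not uniformly in $t$; this growth must be absorbed by the Gaussian factor $e^{-T^2/(4t^2)}$, and doing so cleanly amounts to reintroducing the band-limited approximation --- i.e.\ the paper's $F_{a,t}$ --- anyway. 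In short, truncating first (as the paper does) buys you exact localization and avoids the remainder entirely; truncating last buys nothing and adds a step that, once made rigorous, collapses back into the paper's argument.
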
 
\begin{proof}[Proof of Claim $\ref{prop:q^2-q 1-norm}$]\ 
Let $\chi$ be a compactly supported smooth even function on $\mathbb R$ such that  $\chi(x)=1$ if $|x|<1$, $\chi(x)=0$ if $|x|>2$ and $|\chi(x)|\leqslant 1$ for all $x\in \mathbb R$. Write $\chi_s(x)=\chi(x/s)$.

Let $G$ be the normalizing function given in line $\eqref{eq:G(x)}$. The derivative of $G$ is $G'(x) = \frac{2}{\sqrt{\pi}} e^{-x^2}$. Set $G_t(x)=G(tx)$. Recall that in Lemma \ref{lemma:G_tfinite} and Lemma \ref{lemma:G-sgn}, we have seen that
\begin{enumerate}
	\item $\forall t\geqslant 1$, $\|G(t\widetilde D)\|_{1,K}$ and $\|G(t\widetilde D_c)\|_{1,K}$ are uniformly bounded in $t$.
	\item $\|\sgn(\widetilde D_c)\|_{1,K}<\infty$.
	\item $	\lim_{t\to\infty}\|G_t(\widetilde D_c)-\sgn(\widetilde D_c)\|_{1, K}=0.$
\end{enumerate}
It follows that
\begin{equation}\label{eq:G^2-1}
\lim_{t\to\infty}\|G_t(\widetilde D_c)^2-1\|=0
\end{equation}
as $\sgn(\widetilde D_c)^2=1$.

We define
	\begin{equation}\label{eq:Fat}
	F_{a,t}(x)=\int_0^{xt} (G'* \widehat{\chi}_{a t})(y)dy-1,
	\end{equation} 
	where $a$ is a fixed positive number satisfying $a>\frac{4(K_\Gamma+K)}{\tau_\partial}$. 
	Note that the distributional Fourier transform of $F_{a, t}$ is supported on $[-2a t^2,2a t^2]$. 	
	
	Let $\Psi_T$ be the cut-off function on $\widetilde M_\infty$ which vanishes on $\partial\widetilde M\times [T,\infty)$ given in Definition \ref{def:cutoff}. Let  $ p_{F_{a,t}(\widetilde D)}$ and $q_{F_{a,t}(\widetilde D),T}$ be the elements as defined in line $\eqref{eq:idem}$ and $\eqref{eq:qFDT}$ respectively. We have 
	\begin{align*}
	&q_{F_{a,t}(\widetilde D),T}^2-q_{F_{a,t}(\widetilde D),T}=q_{F_{a,t}(\widetilde D),T}^2-q_{F_{a,t}(\widetilde D),T}-p_{F_{a,t}(\widetilde D)}^2+p_{F_{a,t}(\widetilde D)}\\
	=&-q_{F_{a,t}(\widetilde D),T}(p_{F_{a,t}(\widetilde D)}-q_{F_{a,t}(\widetilde D),T})-(p_{F_{a,t}(\widetilde D)}-q_{F_{a,t}(\widetilde D),T})p_{F_{a,t}(\widetilde D)} \\
	& \quad +(p_{F_{a,t}(\widetilde D)}-q_{F_{a,t}(\widetilde D),T})\\
	=&-q_{F_{a,t}(\widetilde D),T}\big(p_{F_{a,t}(\widetilde D)}-	\begin{psmallmatrix}
	1&0\\0&0
	\end{psmallmatrix}\big)(1-\Psi_T)\\
	&-\big(p_{F_{a,t}(\widetilde D)}-	\begin{psmallmatrix}
	1&0\\0&0
	\end{psmallmatrix}\big)(1-\Psi_T)\cdot p_{F_{a,t}(\widetilde D)}
	+\big(p_{F_{a,t}(\widetilde D)}-	\begin{psmallmatrix}
	1&0\\0&0
	\end{psmallmatrix}\big)(1-\Psi_T)
	\end{align*}

The propagations of $q_{F_{a,t}(\widetilde D),T}$ and $p_{F_{a,t}(\widetilde D)}$ are no more than $12a t^2$. Therefore, if we set $T=24a t^2$, then the operator 
$$q_{F_{a,t}(\widetilde D),{24a t^2}}^2-q_{F_{a,t}(\widetilde D),{24a t^2}}$$
on $\widetilde M_\infty$ coincides with the operator 
$$q_{F_{a,t}(\widetilde D_{\cyc}),{24a t^2}}^2-q_{F_{a,t}(\widetilde D_{\cyc}),{24a t^2}}$$
on $\partial\widetilde M\times\mathbb R$ under the canonical identification between the subspace $\partial\widetilde M\times\R_{\geqslant 0} $ of  $\partial\widetilde M\times\R$ and the cylindrical part $\partial\widetilde M\times\R_{\geqslant 0} $ of $\widetilde M_\infty$.

Note that $F_t-G_t$ is a Schwartz function and its Fourier transform is 
\begin{equation}
\widehat{ F}_t-\widehat{G}_t(\xi)=\widehat{G}_t(\xi)\cdot (1-\chi_{a t^2}(\xi))=\frac{1}{t}\widehat G(t^{-1}\xi)\big(1-\chi (\frac{\xi}{a t^2})\big).
\end{equation}
Since $\widehat G$ has Gaussian decay (away from the origin) with rate $1/4$, the function $\widehat{ F}_t-\widehat {G}_t$ has Gaussian decay on the whole real line with rate $1/4$. Therefore, by Lemma \ref{lemma:Fourier}, for any $\mu>1$,  there exists $C_1>0$ such that 
$$\|F_{a,t}(\widetilde D_{\cyc})_\gamma -G_t(\widetilde D_{\cyc})_\gamma\|_\op\leqslant C_1e^{-\frac{\tau_\partial^2\ell(\gamma)^2}{4\mu^2t^2}}$$
if $\ell(\gamma)$ is sufficiently large. Moreover, since the function $1-\chi(\frac{\xi}{a t^2})$ is supported on $\{\xi\in \mathbb R : |\xi|\geqslant a t^2 \}$, there exists $C_2>0$ such that
\begin{align}
\| F_{a,t}(\widetilde D_{\cyc})_\gamma-G_{t}(\widetilde D_{\cyc})_\gamma\|_\op&\leqslant \| F_{a,t}(\widetilde D_{\cyc})-G_t(\widetilde D_{\cyc})\|_\op \notag\\
&\leqslant \frac{1}{2\pi }\int_{|\xi|\geqslant a t^2}|\frac{1}{t}\widehat G(t^{-1} \xi)|d\xi
\leqslant C_2 e^{-\frac{a^2t^2}{4}} \label{eq:normdiff}
\end{align}

Similar to the proof of Lemma \ref{lemma:largespectralgap}, since we have $a>4(K_\Gamma+K)/\tau_\partial$, there exist positive constants $C$ and $\delta$ such that for any $t>0$,
\begin{equation}\label{eq:Ft-Gt}
\| F_{a,t}(\widetilde D_{\cyc})-G_t(\widetilde D_{\cyc})\|_{1,K}\leqslant C e^{-\delta t^2}.
\end{equation}
Therefore the norm $\| F_{a,t}(\widetilde D_{\cyc})\|_{1,K}$  is uniformly bounded for all $t\geqslant 1$. By the definition of $p_{F_{a,t}(\widetilde D_{\cyc})}$ (cf. line $\eqref{eq:idem}$), we see that the norm $\| p_{F_{a,t}(\widetilde D_{\cyc})}\|_{1, K}$ is also uniformly bounded for all $t\geqslant 1$, which in turn implies that the norm  $\|q_{F_{a,t}(\widetilde D_{\cyc}),T}\|_{1, K}$ is also uniformly bounded for all $t\geqslant 1$ and all $T\geqslant 0$, since the operator given by multiplication by $\Psi_T$ has zero propagation and  $\|\Psi_T\|_{op}\leqslant 1$ holds for all $T\geqslant 0$. 

Furthermore, we have 
\begin{align*}
	& \| F_{a,t}(\widetilde D_{\cyc})^2-1\|_{1, K}  \\
	& \leqslant \| F_{a,t}(\widetilde D_{\cyc})-G_t(\widetilde D_{\cyc})\|_{1, K}\cdot \| F_{a,t}(\widetilde D_{\cyc})+G_t(\widetilde D_{\cyc})\|_{1, K}+\|G_t(\widetilde D_{\cyc})^2-1\|_{1, K},
\end{align*}
where the right hand side goes to zero as $t$ goes to infinity by line $\eqref{eq:Ft-Gt}$ and $\eqref{eq:G^2-1}$. 
It follows that 
\begin{equation}\label{eq:converge}
\lim_{t\to\infty}\big\|p_{F_{a,t}(\widetilde D_{\cyc})}-	\begin{psmallmatrix}
	1&0\\0&0
\end{psmallmatrix}\big\|_{1, K}=0.
\end{equation}
On the other hand,  we have 
\begin{align*}
&q_{F_{a,t}(\widetilde D_{\cyc}),{24a t^2}}^2-q_{F_{a,t}(\widetilde D_{\cyc}),{24a t^2}}\\
=&-q_{F_{a,t}(\widetilde D_{\cyc}),{24a t^2}}\big(p_{F_{a,t}(\widetilde D_{\cyc})}-	\begin{psmallmatrix}
1&0\\0&0
\end{psmallmatrix}\big)(1-\Psi_{24a t^2})\\
&-\big(p_{F_{a,t}(\widetilde D_{\cyc})}-	\begin{psmallmatrix}
1&0\\0&0
\end{psmallmatrix}\big)(1-\Psi_{24a t^2})\cdot p_{F_{a,t}(\widetilde D_{\cyc})}\\
&+\big(p_{F_{a,t}(\widetilde D_{\cyc})}-	\begin{psmallmatrix}
1&0\\0&0
\end{psmallmatrix}\big)(1-\Psi_{24a t^2}).
\end{align*}
Applying line $\eqref{eq:converge}$, we have  
$$\lim_{t\to\infty}\|q_{F_{a,t}(\widetilde D_{\cyc}),{24a t^2}}^2-q_{F_{a,t}(\widetilde D_{\cyc}),{24a t^2}}\|_{1, K}=0.$$

Hence the proof is finished by setting $T = 24at^2$ and $F = F_{a, t}$ for some sufficiently large $t\gg 0$. 
\end{proof}

\subsection{Odd dimensional case}\label{subsec:oddl1}
Let us now turn to the proof of Theorem \ref{thm:main} for the case where $\dim M$ is odd.  Again, assume the same notation as in the Geometric setup $\ref{geoset}$. In particular, $\widetilde D$ is the Dirac operator on $\widetilde M_\infty$ and $\widetilde D_{\cyc}$ is the Dirac operator on the cylinder $\partial\widetilde M\times \R$. 

\begin{proof}[Proof of Theorem $\ref{thm:main}$ for the odd dimensional case]
	Similar to the construction of the higher index $\ind^\Gamma(D)\in K_1(C_r^\ast(\Gamma))$ as in Definition $\ref{def:oddindex}$. The proof of Theorem $\ref{thm:main}$ for the odd dimensional case can be reduced to the following claim, which is an $l^1$-analogue of Lemma \ref{lemma:fninvertible}. 
	\begin{claim}\label{prop:invertible}
		If $|\widetilde D_{\cyc}|>\frac{2(K_\Gamma+K)}{\tau_\partial}$, then there exist a positive number $T$, a positive integer $n$, and a smooth function $P$ satisfying condition (P1)-(P2) in Section \ref{subsec:oddindex}
		such that $\|1+f_n(\pm P(\widetilde D))\Psi_T\|_{1,K}<\infty$. Furthermore, we have 
		both
		$$\|(1+f_n(P(\widetilde D))\Psi_T)(1+f_n(-P(\widetilde D))\Psi_T)-1\|_{1,K}<1,$$
		and 
			$$\|(1+f_n(-P(\widetilde D))\Psi_T)(1+f_n(P(\widetilde D))\Psi_T)-1\|_{1,K}<1,$$
		where $\Psi_T$ is the cut-off function on $\widetilde M_\infty$ which vanishes on $\partial\widetilde M\times [T,\infty)$ given in Definition \ref{def:cutoff} and $f_n$  is the polynomial given in line \eqref{eq:f_n}.
	\end{claim}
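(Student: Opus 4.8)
The plan is to adapt the proof of Claim~\ref{prop:q^2-q 1-norm} to the odd case, combining the auxiliary normalizing function $F_{a,t}$ of line~\eqref{eq:Fat} with the internal arithmetic of the polynomials $f_n$ from line~\eqref{eq:f_n}. I would fix a number $a>4(K_\Gamma+K)/\tau_\partial$ and, for $t\geqslant 1$, take $P=P_{a,t}:=\tfrac12\bigl(1+F_{a,t}\bigr)$; since the distributional Fourier transform of $F_{a,t}$ is supported on $[-2at^2,2at^2]$, this $P$ satisfies (P1)--(P2) with $N_P=2at^2$. The first assertion of the claim is then immediate: $f_n(\pm P(\widetilde D))$ is a polynomial in $P(\widetilde D)$, hence has propagation $\leqslant nN_P$, and operators of finite propagation have finite $\|\cdot\|_{1,K}$-norm because only finitely many group elements contribute (by line~\eqref{eq:quasi}); also $f_n(0)=f_n(1)=0$. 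The heart of the argument is an algebraic identity pushing the cut-off onto the cylinder: writing $A=f_n(P(\widetilde D))$, $B=f_n(-P(\widetilde D))$ and using, exactly as in Lemma~\ref{lemma:q^2-q}, that for $T\geqslant nN_P$ the operators $A(1-\Psi_T)$ and $(1-\Psi_T)B$ are supported in the product-metric region $\partial\widetilde M\times[T-nN_P,\infty)$, where $\widetilde D$ may be replaced by $\widetilde D_{\cyc}$, one gets
\[
(1+A\Psi_T)(1+B\Psi_T)-1=\rho_n(P(\widetilde D))\,\Psi_T-f_n(P(\widetilde D_{\cyc}))(1-\Psi_T)f_n(-P(\widetilde D_{\cyc}))\Psi_T,
\]
where $\rho_n(y):=(1+f_n(y))(1+f_n(-y))-1$; the identity with $A,B$ exchanged handles the reversed product. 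It then suffices to make $\|\rho_n(P(\widetilde D))\|_{1,K}$ and $\|f_n(\pm P(\widetilde D_{\cyc}))\|_{1,K}$ small.

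The cylinder terms are controlled by the spectral gap $|\widetilde D_{\cyc}|>2(K_\Gamma+K)/\tau_\partial$ coming from \eqref{eq:scbd} and the Lichnerowicz formula. By the estimates in the proof of Claim~\ref{prop:q^2-q 1-norm} (via Lemma~\ref{lemma:largespectralgap}) together with Lemma~\ref{lemma:G-sgn}, $\|F_{a,t}(\widetilde D_{\cyc})\|_{1,K}$ is bounded uniformly in $t\geqslant 1$ and $\|F_{a,t}(\widetilde D_{\cyc})-\sgn(\widetilde D_{\cyc})\|_{1,K}\to 0$ as $t\to\infty$; hence, with $H=\tfrac12(1+\sgn)$ the Heaviside function, $\|P(\widetilde D_{\cyc})\|_{1,K}\leqslant B$ for a constant $B$ independent of $t$, and $\|P(\widetilde D_{\cyc})-H(\widetilde D_{\cyc})\|_{1,K}\to 0$. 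Since $\widetilde D_{\cyc}$ is invertible, $H(\widetilde D_{\cyc})$ is an idempotent, so $e^{\pm2\pi iH(\widetilde D_{\cyc})}=1$, and continuity of the exponential in a Banach algebra gives $\|e^{\pm2\pi iP(\widetilde D_{\cyc})}-1\|_{1,K}\to 0$ as $t\to\infty$. Writing $f_n=e^{2\pi i\,\cdot}-1-r_n$ with $r_n(x)=\sum_{k>n}\tfrac{(2\pi i)^k}{k!}x^k+c_nx$, $c_n=\sum_{k=1}^n\tfrac{(2\pi i)^k}{k!}$ (cf.\ line~\eqref{eq:f_n}), the tail satisfies $\|r_n(\pm P(\widetilde D_{\cyc}))\|_{1,K}\leqslant\sum_{k>n}\tfrac{(2\pi B)^k}{k!}+|c_n|B\to 0$ as $n\to\infty$, uniformly in $t$. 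So, choosing first $t$ large and then $n$ large, $\|f_n(\pm P(\widetilde D_{\cyc}))\|_{1,K}<\tfrac14$.

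For the term $\rho_n(P(\widetilde D))$ on $\widetilde M_\infty$, the crucial point is that once $t$ is fixed the quantity $B_t:=\|P(\widetilde D)\|_{1,K}$ is a fixed finite number, and $\rho_n$ carries enough cancellation that $\|\rho_n(P(\widetilde D))\|_{1,K}\to 0$ as $n\to\infty$. With $P_n(y)=\sum_{k=0}^n\tfrac{(2\pi i)^k}{k!}y^k$ one has $1+f_n(y)=P_n(y)-c_ny$, whence
\[
\rho_n(y)=\bigl(P_n(y)P_n(-y)-1\bigr)+c_n\,y\bigl(P_n(y)-P_n(-y)\bigr)-c_n^2y^2 .
\]
Since $\sum_{j+k=m}\tfrac{(2\pi i)^j(-2\pi i)^k}{j!k!}=\tfrac{(2\pi i-2\pi i)^m}{m!}$ vanishes for $1\leqslant m\leqslant n$, the polynomial $P_n(y)P_n(-y)-1$ has only monomials of degree $m$ with $n<m\leqslant 2n$ and coefficients bounded by $\tfrac{(4\pi)^m}{m!}$, so $\|P_n(P(\widetilde D))P_n(-P(\widetilde D))-1\|_{1,K}\leqslant\sum_{m>n}\tfrac{(4\pi B_t)^m}{m!}\to 0$ as $n\to\infty$, while the remaining two terms carry the factor $c_n\to 0$. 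Taking $n$ large gives $\|\rho_n(P(\widetilde D))\|_{1,K}<\tfrac12$; then with $T=nN_P$ the displayed identity yields $\|(1+f_n(\pm P(\widetilde D))\Psi_T)(1+f_n(\mp P(\widetilde D))\Psi_T)-1\|_{1,K}<\tfrac12+\tfrac1{16}<1$, as required.

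The main obstacle, and the reason the algebraic identity above is needed, is that $\widetilde M_\infty$ carries no spectral gap: $\|P(\widetilde D)\|_{1,K}$ is as large as $e^{ct^2}$, so a naive comparison of $1+f_n(P(\widetilde D))\Psi_T$ with the invertible element $e^{2\pi iP(\widetilde D)}$ fails, its inverse $e^{-2\pi iP(\widetilde D)}$ having doubly-exponential $\|\cdot\|_{1,K}$-norm. The identity isolates the only genuinely non-cylindrical contribution, $\rho_n(P(\widetilde D))\Psi_T$, and exploits the exact cancellation $P_n(y)P_n(-y)\equiv 1\pmod{y^{n+1}}$ so that factorial decay of the surviving coefficients beats $B_t^m$ once $n\gg B_t$, while everything else lands on the cylinder where the spectral gap and Lemma~\ref{lemma:G-sgn} do the work. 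The delicate point to verify is that these choices decouple — the cylinder estimate needs $t$ large (with the $t$-uniform bound $B$ entering the $r_n$-tail), and the $\widetilde M_\infty$ estimate needs $n$ large relative to the now-fixed $B_t$ — so no circular dependence arises.
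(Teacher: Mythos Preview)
Your proof is correct and follows essentially the same strategy as the paper's: the same choice $P=P_{a,t}=\tfrac12(1+F_{a,t})$, the same cylinder estimates via Lemma~\ref{lemma:G-sgn} and line~\eqref{eq:Ft-Gt}, and the same two-stage choice (first $t$ large for the cylinder, then $n$ large relative to the now-fixed $B_t=\|P(\widetilde D)\|_{1,K}$).

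The differences are organizational rather than substantive. The paper compares $(1+A\Psi_T)(1+B\Psi_T)$ to the uncut product $(1+A)(1+B)$ and writes their difference as a sum of four terms, each carrying a factor $(1-\Psi_T)$ and hence living on the cylinder; it then bounds $(1+A)(1+B)-1$ by comparing each factor to $e^{\pm 2\pi iP(\widetilde D)}$ and invoking $e^{2\pi iP}e^{-2\pi iP}=1$. Your decomposition
\[
(1+A\Psi_T)(1+B\Psi_T)-1=\rho_n(P(\widetilde D))\Psi_T-A(1-\Psi_T)B\Psi_T
\]
is tighter, producing a single cylinder correction, and your polynomial identity $P_n(y)P_n(-y)\equiv 1\pmod{y^{n+1}}$ is exactly the algebraic shadow of $e^{2\pi iy}e^{-2\pi iy}=1$ that the paper uses implicitly. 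Both routes exploit the same cancellation; yours makes the factorial decay of the surviving coefficients more explicit. One small point: to identify $A(1-\Psi_T)B\Psi_T$ with the corresponding cylinder operator you need the finite-propagation locality principle (the Schwartz kernel of $f_n(P(\widetilde D))$ at $(x,y)$ depends only on $\widetilde D$ in the ball $B_{nN_P}(x)$), which justifies $T=nN_P$; the paper takes $T=2nN_P$, but either works.
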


Let us assume we have verified the claim for the moment.  We conclude that the element $1+f_n(P(\widetilde D))\Psi_T$  admits both left and right inverses in  $(\cL(\widetilde M_{T+nN_P})_K^\Gamma)^+$, hence is invertible in  $(\cL(\widetilde M_{T+nN_P})_K^\Gamma)^+$, where  $(\cL(\widetilde M_{T+nN_P})_K^\Gamma)^+$ the unitization of $\cL(\widetilde M_{T+nN_P})^\Gamma_K$ (cf. Definition $\ref{def:konenorm}$).  It follows that the higher index $\ind^\Gamma(\widetilde D)$ represented by this invertible element lies in $K_1(\cL(\widetilde M_{T+nN_P})^\Gamma_K) \cong K_1(l_K^1(\Gamma))$, which proves Theorem $\ref{thm:main}$ for the odd dimensional case.  

We remark that an $l^1$-analogue of the inequality from line \eqref{eq:inv} does \emph{not} imply that $(1+f_n(P(\widetilde D))\Psi_T)$ is invertible, since the weighted $l^1$-norm of $e^{2\pi i P(\widetilde D)}$ may be much greater than $1$. This is the reason for trying to prove the two inequalities in Claim \ref{prop:invertible} instead.
\end{proof}

Now let us prove Claim \ref{prop:invertible}, hence complete the proof of Theorem $\ref{thm:main}$ for the odd dimensional case.
\begin{proof}[Proof of Claim $\ref{prop:invertible}$] 
		Let  $ G_t$ and $F_{a,t}$ be the functions given in  line \eqref{eq:G(x)} and \eqref{eq:Fat} respectively. Set
$$ P_{a,t}=\frac{ F_{a,t}+1}{2}\text{ and } Q_t=\frac{G_t+1}{2},$$
where $a$ is a fixed positive number satisfying $a>\frac{4(K_\Gamma+K)}{\tau_\partial}$. 
By line \eqref{eq:Ft-Gt}, we see that there exists $A>0$ such that 
$\|P_{a,t}(\widetilde D_{\cyc})\|_{1,K}\leqslant A$ and $\|Q_{t}(\widetilde D_{\cyc})\|_{1,K}\leqslant A$ for all $t\geqslant 1$. 
It follows from line \eqref{eq:e^{}-fn} that
$$
\|e^{ 2\pi i P_{a,t}(\widetilde D_{\cyc})}\|_{1,K}\leqslant e^{2\pi A},~\|e^{ 2\pi i Q_{t}(\widetilde D_{\cyc})}\|_{1,K}\leqslant e^{2\pi A},
$$
$$
\|1+f_n( P_{a,t}(\widetilde D_{\cyc}))\|_{1,K}\leqslant e^{2\pi A}
+{e^{2\pi}}A
,
$$
and
\begin{equation}\label{4.16}
\|e^{ 2\pi i P_{a,t}(\widetilde D_{\cyc})}-\big(1+f_n(  P_{a,t}(\widetilde D_{\cyc}))\big)\|_{1,K}\leqslant \frac{e^{2\pi A}+e^{2\pi}A}{(n+1)!}.
\end{equation}

By the inequality from line $\eqref{eq:Ft-Gt}$, we see that 
\[ \| P_{a,t}(\widetilde D_{\cyc})-Q_t(\widetilde D_{\cyc})\|_{1,K} \to 0, \] as $t$ goes to infinity. Therefore we have
\begin{equation}\label{4.17}
\begin{split}
&\|e^{2\pi i P_{a,t}(\widetilde D_{\cyc})}-e^{2\pi iQ_t(\widetilde D_{\cyc})}\|_{1,K}\\
\leqslant& \|e^{2\pi iQ_t(\widetilde D_{\cyc})}\|_{1,K}\|e^{2\pi i( P_{a,t}(\widetilde D_{\cyc})-Q_t(\widetilde D_{\cyc}))}-1\|_{1,K}\\
\leqslant& e^{2\pi A}(e^{2\pi \| P_{a,t}(\widetilde D_{\cyc})-Q_t(\widetilde D_{\cyc})\|_{1,K}}-1),
\end{split}
\end{equation}
which goes to zero as $t$ goes to infinity.

Let $H$ be the Heaviside step function $H(x) = \frac{\sgn(x)+1}{2}$. Since $\widetilde D_{\cyc}$ is invertible, we have that $e^{2\pi iH(\widetilde D_{\cyc})}=1$. By Lemma \ref{lemma:G-sgn}, we have
\begin{equation}\label{4.18}
\begin{split}
\|e^{2\pi iQ_t(\widetilde D_{\cyc})}-1\|_{1,K}=&
\|e^{2\pi i(Q_t(\widetilde D_{\cyc})-H(\widetilde D_{\cyc}))}-1\|_{1,K}\\
\leqslant& e^{2\pi \|Q_t(\widetilde D_{\cyc})-H(\widetilde D_{\cyc})\|_{1,K}}-1.
\end{split}
\end{equation}
which goes to zero as $t$ goes to infinity.

By combining the inequalities from line \eqref{4.16}, \eqref{4.17} and \eqref{4.18}, we see there exist $t_\cyc>0$ and $n_\cyc>0$ such that if $n\geqslant n_\cyc$ and $t\geqslant t_\cyc$, then
\begin{equation}\label{eq:4.19}
(1+e^{2\pi A}+e^{2\pi }A)\|f_n(P_{a,t}(\widetilde D_{\cyc}))\|_{1,K}<1/6.
\end{equation}
The same inequality also holds if we replace $P_{a,t}(\widetilde D_{\cyc})$ by $-P_{a,t}(\widetilde D_{\cyc})$. Furthermore, since the operator given by the multiplication of $\Psi_T$ has zero propagation and  $\|\Psi_T\|_{op}\leqslant 1$ for all $T\geqslant 0$, it follows that 
\begin{equation}
	(1+e^{2\pi A}+e^{2\pi }A)\|f_n(P_{a,t}(\widetilde D_{\cyc})) \Psi_T\|_{1,K}<1/6.
\end{equation}
for all $T\geqslant 0$. We conclude that  
$$
\|(1+f_n(P_{a,t}(\widetilde D_\cyc)))(1+f_n(-P_{a,t}(\widetilde D_\cyc)))-1\|_{1,K}<1/3,
$$
and
$$
\|(1+f_n(P_{a,t}(\widetilde D_\cyc))\Psi_T)(1+f_n(-P_{a,t}(\widetilde D_\cyc))\Psi_T)-1\|_{1,K}<1/3,
$$
for all $n\geqslant n_\cyc$ and $t\geqslant t_\cyc$, which implies that 
\begin{equation}\label{eq:4.20}
\begin{split}
&\|\big(1+f_{n}(P_{a,t}(\widetilde D_{\cyc}))\big)\big(1+f_{n}(-P_{a,t}(\widetilde D_{\cyc}))\big)\\
&\quad -\big(1+f_{n}(P_{a,t}(\widetilde D_{\cyc}))\Psi_T\big)\big(1+f_{n}(-P_{a,t}(\widetilde D_{\cyc}))\Psi_T\big)\|_{1,K} <2/3.
\end{split}
\end{equation}
for all $n\geqslant n_\cyc$ and $t\geqslant t_\cyc$.

Now we turn to the Dirac operator $\widetilde D$ on $\widetilde M_\infty$. Note that $\widetilde D$ is not invertible in general, and the norm $\|P_{a,t}(\widetilde D)\|_{1,K}$ may not uniformly bounded for $t\geqslant 1$ in general. On the other hand, by construction,  $P_{a,t}(\widetilde D)$ has propagation $\leqslant 2at^2$, and $\|P_{a,t}(\widetilde D)\|_\op\leqslant 1$ for all $t>0$. Therefore by the definition of $\|\cdot\|_{1, K}$ in Definition \ref{def:konenorm}, there exist $C>0$ and $\lambda>0$ such that 
$$\|P_{a,t}(\widetilde D)\|_{1,K} = \sum_{\gamma\in \Gamma} e^{K\ell(\gamma)} \|P_{a,t}(\widetilde D)_\gamma\|_{op}\leqslant  Ce^{\lambda (K+K_\Gamma) at^2},$$
cf. line $\eqref{eq:quasi}$.
Let us denote  $A_t=Ce^{\lambda (K+K_\Gamma)at^2}$. We have 
$$
\|e^{ 2\pi i P_{a,t}(\widetilde D)}\|_{1,K}\leqslant e^{2\pi A_t},~
\|1+f_n( P_{a,t}(\widetilde D))\|_{1,K}\leqslant e^{2\pi A_t}
+{e^{2\pi}}A_t
,
$$
and
\begin{equation}\label{4.20}
\|e^{ 2\pi i P_{a,t}(\widetilde D)}-\big(1+f_n(  P_{a,t}(\widetilde D))\big)\|_{1,K}\leqslant \frac{e^{2\pi A_t}+e^{2\pi}A_t}{(n+1)!}.
\end{equation}
The same inequalities hold if we replace $P_{a,t}(\widetilde D)$ by $-P_{a,t}(\widetilde D)$.
Note that, for the positive numbers $n_c$ and $t_c$ from above,  there exists a positive integer $n_0$ such that $n_0\geqslant n_c$ and 
\begin{equation}
\frac{(e^{2\pi A_{t_\cyc}}+e^{2\pi}A_{t_\cyc})^2}{(n_0+1)!}<1/3.
\end{equation}
In particular, it follows that 
\begin{equation}\label{4.23}
\|(1+f_{n_0}(P_{a,t_\cyc}(\widetilde D)))(1+f_{n_0}(-P_{a,t_\cyc}(\widetilde D)))-1\|_{1,K}<1/3,
\end{equation}
since $e^{ 2\pi i P_{a,t_\cyc}(\widetilde D)}e^{-2\pi i P_{a,t_\cyc}(\widetilde D)}=1$.

Note that
\begin{align*}
&(1+f_{n_0}(P_{a,t_\cyc}(\widetilde D)))(1+f_{n_0}(-P_{a,t_\cyc}(\widetilde D)))\\
&-(1+f_{n_0}(P_{a,t_\cyc}(\widetilde D))\Psi_T)(1+f_{n_0}(-P_{a,t_\cyc}(\widetilde D))\Psi_T)\\
=&f_{n_0}(P_{a,t_\cyc}(\widetilde D))(1-\Psi_T)+f_{n_0}(-P_{a,t_\cyc}(\widetilde D))(1-\Psi_T)\\
&+f_{n_0}(P_{a,t_\cyc}(\widetilde D))(1-\Psi_T)f_{n_0}(-P_{a,t_\cyc}(\widetilde D))\Psi_T
+f_{n_0}(P_{a,t_\cyc}(\widetilde D))f_{n_0}(-P_{a,t_\cyc}(\widetilde D))(1-\Psi_T).
\end{align*}
Since the propagation of $f_{n_0}(\pm P_{a,t_\cyc}(\widetilde D))$ is no more than $2n_0a t_\cyc^2$, if we choose $T=4n_0a t_\cyc^2$,  then the operator 
\begin{align*}
&(1+f_{n_0}(P_{a,t_\cyc}(\widetilde D)))(1+f_{n_0}(-P_{a,t_\cyc}(\widetilde D)))\\
&-(1+f_{n_0}(P_{a,t_\cyc}(\widetilde D))\Psi_{4n_0a t^2_\cyc})(1+f_{n_0}(-P_{a,t_\cyc}(\widetilde D))\Psi_{4n_0a t_\cyc^2})
\end{align*}
on $\widetilde M_\infty$ is identified with
\begin{align*}
&(1+f_{n_0}(P_{a,t_\cyc}(\widetilde D_{\cyc})))(1+f_{n_0}(-P_{a,t_\cyc}(\widetilde D_{\cyc})))\\
&-(1+f_{n_0}(P_{a,t_\cyc}(\widetilde D_{\cyc}))\Psi_{4n_0a t_\cyc^2})(1+f_{n_0}(-P_{a,t_\cyc}(\widetilde D_{\cyc}))\Psi_{4n_0a t^2_\cyc})
\end{align*}
on $\partial \widetilde M\times\mathbb R$ under the canonical identification between the subspace $\partial\widetilde M\times\R_{\geqslant 0} $ of  $\partial\widetilde M\times\R$ and the cylindrical part $\partial\widetilde M\times\R_{\geqslant 0} $ of $\widetilde M_\infty$. Thus it follows from line \eqref{eq:4.20} that
\begin{equation}
\begin{split}
&\|\big(1+f_{n_0}(P_{a,t_\cyc}(\widetilde D))\big)\big(1+f_{n_0}(-P_{a,t_\cyc}(\widetilde D))\big)\\
&\quad -\big(1+f_{n_0}(P_{a,t_\cyc}(\widetilde D))\Psi_{4n_0a t^2_\cyc}\big)\big(1+f_{n_0}(-P_{a,t_\cyc}(\widetilde D))\Psi_{4n_0a t_\cyc^2}\big)\|_{1,K} < 2/3
\end{split}
\end{equation}
Together with line \eqref{4.23}, we have proved 
$$\|(1+f_n(P(\widetilde D))\Psi_T)(1+f_n(-P(\widetilde D))\Psi_T)-1\|_{1,K}<1,$$
by  setting  $P = P_{a,t_c}, n = n_0$ and $T = 4n_0a t_c^2$. The other inequality 
$$\|(1+f_n(-P(\widetilde D))\Psi_T)(1+f_n(P(\widetilde D))\Psi_T)-1\|_{1,K}<1,$$
can be proved in the exact same way in this case. This finishes the proof. 
\end{proof}

\section{An $l^1$-Atiyah-Patodi-Singer higher index formula}\label{sec:APS}
In this section, we prove an $l^1$-version of the Atiyah-Patodi-Singer higher index theorem.

Let us first recall the following definition of cyclic cocycles (cf. \cite[part II]{Connes}\cite[chapter 3, \S1]{ConnesNCG}) 

\begin{definition}\label{def:delocal}
Let $\Gamma$ be a discrete group. 
\begin{enumerate}[label=(\arabic*)]
	\item We say a map
	\[ \varphi\colon \Gamma^{n+1} = \underbrace{\Gamma \times \cdots \times \Gamma}_{(n+1) \textup{ copies} }\to\C \]  is a cyclic $n$-cocyle on $\Gamma$ if $\varphi$ is cyclic, that is, 
	$$\varphi(\gamma_n,\gamma_0,\cdots,\gamma_{n-1})=(-1)^n\varphi(\gamma_0,\gamma_1,\cdots,\gamma_n)
	,\ \forall\gamma_i\in\Gamma,
	$$ and $\varphi$ is closed, that is, 
	\begin{align*}
		(b\varphi)(\gamma_0,\cdots,\gamma_{n+1}):=
		&\sum_{i=0}^n(-1)^i\varphi(\cdots,\gamma_i\gamma_{i+1},\cdots)\\
		& \quad +(-1)^{n+1}\varphi(\gamma_{n+1}\gamma_0,\gamma_1,\cdots,\gamma_n)  =0,
	\end{align*}
	for $\forall\gamma_i\in\Gamma$. 
	\item A cyclic $n$-cocycle $\varphi$ of $\Gamma$ is said to be delocalized if 
	$$ \varphi(\gamma_0,\gamma_1,\cdots,\gamma_n)=0$$
	for all $\gamma_i$ such that $\gamma_0\gamma_1\cdots\gamma_n=I$ where $I$ is the identity element of $\Gamma$. 
\end{enumerate}
\end{definition} 
Let us fix a length function $\ell$ on $\Gamma$. We have the following notion of exponential growth for maps on $\Gamma^{n+1}$. 
\begin{definition}
A map $\varphi\colon \Gamma^{n+1} \to \mathbb C$
 is said to have at most exponential growth with respect to the length function $\ell$ on $\Gamma$ if there are positive numbers $C$ and $K_\varphi$ such that $$|\varphi(\gamma_0,\gamma_1,\cdots,\gamma_n)|\leqslant Ce^{K_\varphi (\ell(\gamma_0)+\ell(\gamma_1)+\cdots+\ell(\gamma_n))},\ \forall \gamma_i\in\Gamma.$$
\end{definition}

If $\varphi$ has exponential growth  rate $K_\varphi$, then it extends to a bounded linear functional on $l^1_{K_\varphi}(\Gamma)^{\otimes (n+1)}$, where the maximal tensor product is used. Let $\cS$ be the algebra of trace class operators on a Hilbert space. The map $\varphi$ induces the following bounded linear functional
$$\varphi\#\tr\colon \cS\otimes  l^1_{K_\varphi}(\Gamma)^{\otimes (n+1)}\to \C$$
by setting $$\varphi\#\tr(a)=\sum_{\gamma_0,\cdots,\gamma_n\in\Gamma }\tr(a_{\gamma_0,\cdots,\gamma_n})\varphi(\gamma_0,\cdots,\gamma_n),$$
for all 
$$ a=\sum_{\gamma_0,\cdots,\gamma_n\in\Gamma }a_{\gamma_0,\cdots,\gamma_n}\gamma_0\otimes\cdots\otimes \gamma_n\in \cS\otimes  l^1_{K_\varphi}(\Gamma)^{\otimes (n+1)}.$$

Now let us recall the definition of the delocalized higher eta invariant (cf. \cite[section 3]{CWXY}). 
Let $X$ be a complete Riemannian spin manifold. Let $\Gamma$ be a finitely represented group and $\widetilde X$ a normal $\Gamma$-cover of $X$.
\begin{definition}\label{def:h-del-eta}
  Suppose the Dirac operator $\widetilde D_X$ on $\widetilde X$ is invertible. 
  \begin{enumerate}
  	\item If $\dim X$ is odd and $\varphi$ is a delocalized cyclic $(2m)$-cocycle of $\Gamma$,  we define the delocalized higher eta invariant of  $\widetilde D_X$ at $\varphi$ to be
  	\begin{equation}\label{eq:eta_phi}
  		\eta_\varphi(\widetilde D_X)
  		\coloneqq \frac{m!}{\pi i}
  		\int_0^\infty \varphi\#\tr(\dot u_s u_s^{-1}\otimes\big( ((u_s-1)\otimes (u_s^{-1}-1)) )^{\otimes m}\big)ds,
  	\end{equation}
  	where $\{u_s\}_{s\in [0, \infty)}$ is the path of invertible elements given in line \eqref{eq:u_s}. 
  	\item If $\dim X$ is even and $\varphi$ is a delocalized cyclic $(2m+1)$-cocycle, we  define the delocalized higher eta invariant of  $\widetilde D_X$ at $\varphi$ to be
  	\begin{equation}
  		\eta_\varphi(\widetilde D_X)
  		\coloneqq \frac{(2m)!}{m!\pi i}
  		\int_0^\infty \varphi\#\tr([\dot p_s,p_s]\otimes(p_s-\begin{psmallmatrix}
  			1&0\\0&0
  		\end{psmallmatrix})^{\otimes (2m+1)})ds,
  	\end{equation}
  	where $p_s$ is given in line \eqref{eq:p_s}.
  \end{enumerate} 
\end{definition}

 In \cite[Section 3.3]{CWXY} the authors showed the integral of $\eta_\varphi(\widetilde D_X)$  converges absolutely, under the conditions that  the delocalized cyclic cocycle $\varphi$ has at most exponential growth with growth rate $K_\varphi$
and
$$|\widetilde D_X|>\frac{4(K_\Gamma+K_\varphi)}{\tau}.$$
In general, it is an open question whether the integral converges.


Recall that 	 $l^1_{K_\varphi}(\Gamma)\otimes\cS$ is a smooth dense subalgebra of $l^1_{K_\varphi}(\Gamma)\otimes\cK$, that is, $l^1_{K_\varphi}(\Gamma)\otimes\cS$ is dense and closed under holomorphic functional calculus in $l^1_{K_\varphi}(\Gamma)\otimes\cK$.  In particular, we have 
\[  K_\ast(l^1_{K_\varphi}(\Gamma)\otimes\cK) \cong K_\ast(l^1_{K_\varphi}(\Gamma)\otimes\cS). \] 
Denote by $(l^1_{K_\varphi}(\Gamma)\otimes\cS)^+$ the unitization of $l^1_{K_\varphi}(\Gamma)\otimes\cS$. For a delocalized $n$-cocycle $\varphi$ with exponential growth, we extend $\varphi\#\tr$ from $(l^1_{K_\varphi}(\Gamma)\otimes\cS)^{\otimes (n+1)}$ to $((l^1_{K_\varphi}(\Gamma)\otimes\cS)^+)^{\otimes (n+1)}$ by setting its value to be zero when encountering the extra identity.

It follows that for each cyclic $(2m)$-cocycle $\varphi$ with exponential growth rate $K_\varphi$, the Connes-Chern character map
\begin{equation}
	\ch_\varphi(p)\coloneqq \frac{(2m)!}{m!} \varphi\# \tr(p^{\otimes 2m+1})
\end{equation} 
for idempotents $p \in M_j(\C)\otimes (l^1_{K_\varphi}(\Gamma)\otimes\cS)^+$,  defines a homomorphism
$$\ch_\varphi\colon K_0(l^1_{K_\varphi}(\Gamma))\to\mathbb C.$$
Here $(l^1_{K_\varphi}(\Gamma)\otimes\cS)^+$ is the unitization of $l^1_{K_\varphi}(\Gamma)\otimes\cS$. If $\varphi$ is a cyclic $(2m+1)$-cocycle $\varphi$ with exponential growth rate $K_\varphi$, then the Connes-Chern character map
$$\ch_\varphi\colon K_1(l^1_{K_\varphi}(\Gamma)\otimes\cK)\to\mathbb C$$
is given by
\begin{equation}
\ch_\varphi(u)\coloneqq \frac{1}{m!} \varphi\# \tr((u^{-1}\otimes u)^{\otimes m}),
\end{equation} 
for invertible elements $u$ in $M_j(\C)\otimes (l^1_{K_\varphi}(\Gamma)\otimes\cS)^+$.

We have the following $l^1$-version of the Atiyah-Patodi-Singer higher index theorem.   
\begin{theorem}\label{thm:aps}
Assume the same notation from the geometric setup $\ref{geoset}$. Let $\varphi$ be the delocalized cyclic cocycle  that has exponential growth rate $K_\varphi$. If  the scalar curvature $k$ on $\partial M$ satisfies that
	$$\inf_{x\in\partial M}k(x)>\frac{16(K_\Gamma+K_\varphi)^2}{\tau^2_\partial},$$
	then we have
	$$\ch_\varphi(\ind^\Gamma_{1,K_\varphi}(D))=-\frac 1 2\eta_\varphi(\widetilde D_\partial).$$
\end{theorem}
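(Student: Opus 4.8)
The plan is to split the argument into an \emph{analytic} part — showing that both sides of the asserted identity are absolutely convergent — and a \emph{$K$-theoretic} part — establishing the equality by passing through an $l^1$-version of the $K$-theoretic higher Atiyah--Patodi--Singer formula of Piazza--Schick and Xie--Yu \cite{Piazzarho,Xiepos} and then pairing with $\varphi$. For concreteness I describe the case $\dim M$ even, so that $\dim\partial M$ is odd and $\varphi$ is a delocalized cyclic $2m$-cocycle; the odd-dimensional case is entirely parallel, with idempotents replaced by invertibles and \eqref{eq:conneschern}, \eqref{eq:highereta} replaced by their odd analogues. For the convergence part, the hypothesis $\inf_{\partial M}\kappa>16(K_\Gamma+K_\varphi)^2/\tau_\partial^2$ together with the Lichnerowicz formula gives $|\widetilde D_\partial|>2(K_\Gamma+K_\varphi)/\tau_\partial$ on the closed boundary, which is precisely the spectral-gap condition under which \cite[Theorem 3.24]{CWXY} guarantees absolute convergence of the integral defining $\eta_\varphi(\widetilde D_\partial)$ in \eqref{eq:highereta}. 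For the left-hand side I would represent $\ind^\Gamma_{1,K_\varphi}(D)$ by the idempotent $\Theta(q_{F_{a,t}(\widetilde D),T})$ produced in Claim \ref{prop:q^2-q 1-norm} for a sufficiently large $t$; after the standard smoothing this lies in $M_2\big((\cS\otimes l^1_{K_\varphi}(\Gamma))^+\big)$ and its difference from the trivial idempotent has finite weighted $l^1$-norm by the estimates of Section \ref{sec:2} (Lemma \ref{lemma:largespectralgap}, Lemma \ref{lemma:local_appro}, and the bounds around \eqref{eq:Ft-Gt}). Since $\varphi$ has exponential growth rate $K_\varphi$, the functional $\varphi\#\tr$ is bounded on the maximal tensor power $\big(\cS\otimes l^1_{K_\varphi}(\Gamma)\big)^{\otimes(2m+1)}$, so the series $\varphi\#\tr(p^{\otimes 2m+1})$ defining $\ch_\varphi(\ind^\Gamma_{1,K_\varphi}(D))$ converges absolutely; it has infinitely many nonzero terms precisely because this representative is a genuine $l^1_{K_\varphi}$-element, not a finitely supported one.

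For the equality I would first set up, over $\widetilde M_\infty$ and in the $K_\varphi$-weighted $l^1$-category, the Banach geometric algebras $\cB_L(\widetilde M_\infty)^\Gamma_{K_\varphi}$ and $\cB_{L,0}(\widetilde M_\infty)^\Gamma_{K_\varphi}$, using the fundamental domain $\mathcal F_\infty$ and the quasi-isometry bound \eqref{eq:quasi}, exactly as in Section \ref{subsec:higher rho}. Because $\widetilde D_{\cyc}$ is invertible with large spectral gap, the family $s\mapsto F_{a,t}(s^{-1}\widetilde D)$ together with the cut-off $\Psi_T$ defines — in addition to the index idempotent $\Theta(q_{F_{a,t}(\widetilde D),T})$ — a path which is an honest idempotent on the cylindrical end, by the convergence \eqref{eq:converge} established in the proof of Claim \ref{prop:q^2-q 1-norm}. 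This produces a single relative (mapping-cone) $K$-theory class, whose image in $K_0(l^1_{K_\varphi}(\Gamma))$ is $\ind^\Gamma_{1,K_\varphi}(D)$ and whose cylindrical component is the $l^1$-higher rho invariant $\rho_{1,K_\varphi}(g_\partial+dt^2)$ of the Dirac operator on $\partial\widetilde M\times\R$, lying in $K_1(\cB_{L,0}(\partial\widetilde M\times\R)^\Gamma_{K_\varphi})$. Applying the product formula (Theorem \ref{thm:product}) to the closed spin manifold $\partial M$ with its large-scalar-curvature metric $g_\partial$ rewrites this cylindrical component as $\rho_{1,K_\varphi}(g_\partial)\otimes\ind_L(D_\R)$. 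This is the step in which the product formula is indispensable: the Atiyah--Patodi--Singer construction on $\widetilde M_\infty$ naturally produces boundary data on the cylinder $\partial\widetilde M\times\R$, and only the product formula lets us divide out the $\R$-factor and return to the boundary manifold $\partial M$ itself, where the higher eta invariant is defined.

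Finally I would pair the relative class with $\varphi$. On one side this recovers $\ch_\varphi(\ind^\Gamma_{1,K_\varphi}(D))$ together with a \emph{local} term — the pairing of $\varphi$ with the diagonal-at-$e$ (local index) part of the index idempotent — which vanishes here because $\varphi$ is delocalized. On the other side, pairing $\varphi$ with $\rho_{1,K_\varphi}(g_\partial)\otimes\ind_L(D_\R)$: the $\R$-factor contributes only a trivial scalar (the Dirac operator on $\R$ has ``index $1$''), so this equals the cyclic pairing of $\varphi$ with the path $\{u_s\}$ representing $\rho_{1,K_\varphi}(g_\partial)$, i.e.\ precisely the integral \eqref{eq:eta_phi}/\eqref{eq:highereta} defining $\eta_\varphi(\widetilde D_\partial)$; bookkeeping of the normalizing constants in \eqref{eq:conneschern} and \eqref{eq:highereta} yields the factor $-1/2$. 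Concretely, at the level of cyclic chains this is a Chern--Connes transgression: deforming the index representative via $t\mapsto F_{a,t}$ so that it localizes onto the cylinder (possible by \eqref{eq:converge}) turns $\varphi\#\tr(p^{\otimes 2m+1})$ into an integral over the deformation parameter, which one identifies termwise with the eta integrand.

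The principal difficulty is carrying out this transgression entirely within the Banach-algebra category. Unlike in the $C^*$-setting one cannot invoke positivity or unbounded functional calculus, so each homotopy and each transgression must be realized by explicit paths of (quasi-)idempotents and invertibles lying in the appropriate weighted $l^1$-algebras, and every intermediate series and integral must be shown to converge. This is exactly where the quantitative estimates of Section \ref{sec:2} — the large-spectral-gap-on-the-boundary norm bounds built on Lemma \ref{lemma:largespectralgap} — are used repeatedly, and it is why the hypothesis must be phrased in terms of the combined rate $K_\Gamma+K_\varphi$ rather than $K_\Gamma$ alone: the Chern character with respect to $\varphi$ introduces weights growing like $e^{K_\varphi\ell(\cdot)}$, which must be absorbed by the spectral gap. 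A secondary, more routine technical point is to make precise the naturality of $\ch_\varphi$ with respect to the mapping-cone boundary map and to the product map of Theorem \ref{thm:product}, which again reduces to a convergent cyclic-chain computation.
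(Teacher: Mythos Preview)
Your proposal is correct and follows essentially the same strategy as the paper: both arguments reduce the identity to the $K$-theoretic higher APS picture (index $\leftrightarrow$ boundary-map image of the cylindrical $l^1$-higher rho invariant), invoke the product formula (Theorem~\ref{thm:product}) to pass from the cylinder $\partial\widetilde M\times\R$ back to $\partial M$, and then pair with $\varphi$.

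The only noteworthy difference is in packaging. The paper outsources two of your steps to \cite{CWXY}: it cites \cite[Theorem~6.1(b)]{CWXY} to identify $\eta_\varphi(\widetilde D_\partial)$ with the value of a $K$-theory homomorphism $\bar\eta_\varphi$ on $\rho_{1,K_\varphi}(g_\partial)$, and \cite[Proposition~7.2]{CWXY} to obtain directly the relation $\ch_\varphi(\ind^\Gamma_{1,K_\varphi}(D))=-\tfrac12\,\bar\eta_\varphi\big(\partial(\ind^\Gamma_{1,K_\varphi}(D))\big)$, where $\partial$ is the boundary map of the short exact sequence $0\to\cB_{L,0}\to\cB_L\to\cB\to 0$. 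It then shows $\partial(\ind^\Gamma_{1,K_\varphi}(D))=\partial_{MV}(\rho_{1,K_\varphi}(g_\partial+dt^2))$ via an explicit Mayer--Vietoris sequence on $\partial\widetilde M\times\R$, and finishes with the product formula. Your mapping-cone/relative-class language and your explicit Chern--Connes transgression are exactly the content of those cited results, rephrased; the advantage of the paper's route is brevity, while yours is more self-contained and makes the role of the delocalization hypothesis (killing the local term) slightly more visible.
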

\begin{proof}
	We prove the case where $\varphi$ is an even-degree cocycle and $\dim \partial M$ is odd. The case where $\varphi$ is an odd-degree cocycle and $\dim \partial M$ is even can be proved in the same way.
		
	As shown in \cite[section 6]{CWXY}, the formula in line \eqref{eq:eta_phi} extends to a homomorphism 
	$$\bar\eta_\varphi\colon K_1(\cL_{L,0}(\partial \widetilde M)^\Gamma_{K_\varphi})\to\mathbb C$$
	in the following sense(cf. \cite[Theorem 6.1$(b)$]{CWXY})
	\begin{equation}\label{eq:eta-det}
\eta_\varphi(\widetilde D_\partial)=\bar\eta_\varphi(\rho_{1,K_\varphi}(g_\partial)),
	\end{equation}
	where $\rho_{1,K_\varphi}(g_\partial)$ is the $l^1$-higher rho invariant of $(\partial M,g_\partial)$ given in Definition \ref{def:higherrho}. More precisely, the same proof of \cite[Theorem 6.1$(b)$]{CWXY} can be used to prove the equality $\eqref{eq:eta-det}$ above.

	Consider the following short exact sequence
	$$0\longrightarrow \cL_{L,0}(\partial\widetilde M)^\Gamma_{K_\varphi} \longrightarrow \cL_L(\partial\widetilde M)^\Gamma_{K_\varphi}\longrightarrow \cL(\partial\widetilde M)^\Gamma_{K_\varphi}\longrightarrow 0.$$
	We denote the  boundary map in the associated $K$-theory long exact sequence by
	$$\partial\colon K_*(\cL(\partial\widetilde M)^\Gamma_{K_\varphi})\cong K_*(l^1_{K_\varphi}(\Gamma))\longrightarrow K_{*+1}(\cL_{L,0}(\partial\widetilde M)^\Gamma_{K_\varphi}) .$$
	
	By \cite[Proposition 7.2]{CWXY}, we have that
	\begin{equation}
	\ch_\varphi(\ind^\Gamma_{1,K_\varphi}( D))=-\frac{1}{2}\bar\eta_\varphi\big(\partial( \ind^\Gamma_{1,K_\varphi}(D)) \big).
	\end{equation}
	
Let $Y$ be a subspace of $\sC$. We define $\cL_{L,0}(Y,\sC)^\Gamma_{K_\varphi}$ to be the completion of the collection of functions $f\colon[0,\infty)\to \C(\sC)^\Gamma_{K_\varphi}$ satisfying
\begin{enumerate}
	\item $\|f(s)\|_{1, K_\varphi}$ is uniformly bounded and uniformly continuous in $s$,
	\item the propagation of $f(s)$ goes to zero as $s$ goes to infinity,
	\item $f(0)=0$,
	\item and there exists $\lambda >0$ such that for any $s\geqslant 0$, $f(s)\cdot \chi_{Y, \lambda} = 0$, where $\chi_{Y, \lambda}$ is the characteristic function on the $\lambda$-neighborhood of $Y$. 
\end{enumerate}
with respect to the norm $\|f\|_{1, K_\varphi}:=\sup_{s\geqslant 0}\|f(s)\|_{1, K_\varphi}.$ Note that for each subspace $Y$ of $\sC$, the Banach algebra  $\cL_{L,0}(Y,\sC)^\Gamma_{K_\varphi}$ is a closed two-sided  ideal of $\cL_{L,0}(\sC)^\Gamma_{K_\varphi}$. 

Observe that 
\[ \cB_{L,0}(\sC)^\Gamma_{K_\varphi} = \cB_{L,0}(\sC_+,\sC)^\Gamma_{K_\varphi} + \cB_{L,0}(\sC_+,\sC)^\Gamma_{K_\varphi} \] 
and 
\[ \cB_{L,0}(\partial\widetilde M,\sC)^\Gamma_{K_\varphi} = \cB_{L,0}(\sC_+,\sC)^\Gamma_{K_\varphi} \cap  \cB_{L,0}(\sC_+,\sC)^\Gamma_{K_\varphi}. \] 
Therefore, we have the following Mayer-Vietoris sequence in $K$-theory: 
\begin{equation}\label{cd:mv}
	\begin{gathered}
\adjustbox{scale=0.84}
{\begin{tikzcd}[column sep=2ex,row sep=3ex]
		K_0(\cB_{L,0}(\partial\widetilde M)^\Gamma_{K_\varphi}) \arrow{r} & 		\begin{matrix}
			K_0(\cL_{L,0}(\sC_+,\sC)^\Gamma_{K_\varphi})\\
			\oplus\\
			K_0(\cL_{L,0}(\sC_-,\sC)^\Gamma_{K_\varphi})
		\end{matrix} \arrow[r] & K_0(\cB_{L,0}(\sC)^\Gamma_{K_\varphi}) \arrow{d}{\partial_{MV}} \\
		K_1(\cB_{L,0}(\sC)^\Gamma_{K_\varphi}) \arrow{u}  & 		\begin{matrix}
			K_1(\cL_{L,0}(\sC_+,\sC)^\Gamma_{K_\varphi})\\
			\oplus\\
			K_1(\cL_{L,0}(\sC_-,\sC)^\Gamma_{K_\varphi})
		\end{matrix} \arrow[l]                             & K_1(\cB_{L,0}(\partial\widetilde M)^\Gamma_{K_\varphi}) \arrow[l]
	\end{tikzcd}
}
\end{gathered}
\end{equation}
where we have used the identification
\[ K_\ast(\cB_{L,0}(\partial\widetilde M,\sC)^\Gamma_{K_\varphi})  \cong K_\ast(\cB_{L,0}(\partial\widetilde M)^\Gamma_{K_\varphi}). \]
 
	By the standard constructions of the boundary maps $\partial$ and $\partial_{MV}$, we have 
		\begin{equation}
	\partial\big(\ind^\Gamma_{1,K_\varphi}(D)\big)=\partial_{MV}\big(\rho_{1,K_\varphi}(g_\partial +dt^2) \big)\in K_1(\cL_{L,0}(\partial\widetilde M)^\Gamma_{K_\varphi}),
	\end{equation}
	where $\rho_{1,K_\varphi}(g_\partial +dt^2)$ is the $l^1$-higher rho invariant on $\partial\widetilde M\times\R$ (cf. Definition \ref{def:higherrho}) and  $\partial_{MV}$ is the boundary map
	$$ \partial_{MV}\colon K_0(\cB_{L,0}(\sC)^\Gamma_{K_\varphi})\longrightarrow K_{1}(\cB_{L,0}(\partial\widetilde M)^\Gamma_{K_\varphi})$$
	from the Mayer-Vietoris sequence above.  By the standard construction of the boundary map $\partial_{MV}$, we also have the following commuting diagram
	$$\xymatrix{K_1(\cL_{L,0}(\sC)^\Gamma_{K_\varphi})\ar[r]^-{\partial_{MV}}& K_0(\cL_{L,0}(\partial\widetilde M)^\Gamma_{K_\varphi})\\
		K_0(\cL_{L,0}(\partial\widetilde M)^\Gamma_{K_\varphi})\ar[u]^{\otimes \ind_L(D_\R)}\ar@{=}[ur]
	}
	$$
	where $\ind_L(D_\R)$ is the local higher index of the Dirac operator $D_\R=\frac{1}{i}\frac{d}{dx}$ on $\R$  (cf. \cite{Yulocalization}). On the other hand, the product formula from Theorem \ref{thm:product} implies that 
	\begin{equation}\label{eq:product}
	\partial_{MV}\big(\rho_{1,K_\varphi}(g_\partial +dt^2)\big)=\rho_{1,K_\varphi}(g_\partial)\in K_1(\cL_{L,0}(\partial\widetilde M)^\Gamma_{K_\varphi}).
	\end{equation}
	Hence we have
	\begin{align*}
		\ch_\varphi(\ind^\Gamma_{1,K_\varphi}(D))&=-\frac{1}{2}\bar\eta_\varphi\big(\partial(\ind^\Gamma_{1,K_\varphi}(\widetilde D))\big)
	=-\frac{1}{2}\bar\eta_\varphi\big(\partial_{MV}\big(\rho_{1,K_\varphi}(g_\partial +dt^2)\big)\big)\\
	&=-\frac{1}{2}\bar\eta_\varphi(\rho_{1,K_\varphi}(g_\partial))=-\frac 1 2\eta_\varphi(\widetilde D_\partial)	
	\end{align*}
This finishes the proof. 
\end{proof}

\subsection{An alternative proof of Theorem \ref{thm:aps}}\label{sec:alter}
As pointed out in the introduction, Theorem \ref{thm:aps} is an strengthening of the higher Atiyah-Patodi-Singer index formula in  \cite[Theorem 3.36]{CLWY}. Since \cite[Theorem 3.36]{CLWY} was stated and proved using the language of $b$-calculus. In this subsection, for the convenience of the reader, we give an alternative proof of Theorem \ref{thm:aps} using the language of $b$-calculus.

We retain the same notation from the geometric setup $\ref{geoset}$. For brevity, let us assume  the dimension of $\partial M$ is odd. The other case is completely similar.  

	Let us briefly review Lott's noncommutative differential higher eta invariant. We shall follow  closely the notation in Lott's paper \cite{Lott1}.  For any $K>0$, let $l^1_{K}(\Gamma)$ be the weighted $l^1$-algebra of $\Gamma$ from Definition $\ref{def:konenorm}$. 
	The universal graded differential algebra of $l^1_{K}(\Gamma)$ is defined to be  
	\[ \Omega_\ast(l^1_{K}(\Gamma)) = \bigoplus_{j=0}^\infty \Omega_k(l^1_{K}(\Gamma)) \]
	where as a vector space, $\Omega_j(l^1_{K}(\Gamma)) = l^1_{K}(\Gamma) \otimes ( l^1_K(\Gamma)/\mathbb C)^{\otimes j}$. As $l^1_{K}(\Gamma)$ is a Banach algebra, we shall consider the Banach completion of $\Omega_\ast(l^1_{K}(\Gamma))$, which will still be denoted by $\Omega_\ast(l^1_{K}(\Gamma))$. 
	
	Let $S(\partial M)$ be the spinor bundle on $\partial M$. We denote the corresponding $l^1_{K}(\Gamma)$-vector bundle by $\mathfrak S = ( \partial \widetilde M\times_\Gamma l^1_{K_\varphi}(\Gamma))\otimes S(\partial M)$ and the space of smooth sections by $C^\infty (\partial M; \mathfrak S)$. Now suppose $\psi$ is a smooth function on $\partial\widetilde M$ with compact support  such that 
	\[ \sum_{\gamma\in\Gamma}\gamma\psi=1. \] Then we have a superconnection  $\nabla\colon C^\infty(\partial M; \mathfrak S)\to C^\infty(\partial M; \mathfrak S\otimes_{l^1_{K_\varphi}(\Gamma)} \Omega_1(l^1_{K_\varphi}(\Gamma)))$ given by 
	$$\nabla(f)=\sum_{\gamma\in \Gamma}(\psi \cdot  \gamma f)\otimes_{l^1_{K_\varphi}(\Gamma)} d\gamma.$$  See \cite{MR790678} for more details of the superconnection formalism. 
	\begin{definition}[{\cite[Section 4.4 \& 4.6]{Lott1}}]
		Lott's noncommutative differential higher eta invariant $\widetilde \eta(\widetilde D_\partial)$ is defined by the formula
		\[ \widetilde \eta(\widetilde D_\partial)=\int_0^\infty \STR(\widetilde D_\partial  \, e^{-(t\widetilde D_\partial +\nabla)^2})dt, \]
		where $\STR$ is the corresponding supertrace, cf. \cite[Proposition 22]{Lott1}. 
	\end{definition}
	
	Combining Quillen's superconnection formalism with Melrose's $b$-calculus formalism \cite{RM93}, for each $t>0$, one defines the $b$-Connes-Chern character of $\widetilde D$ on $\widetilde M_\infty$  to be
	\[ b\mhyphen\Ch_t(\widetilde D) =  b\mhyphen\STR(e^{-(t\widetilde D + \nabla )^2}) \in \Omega_*(l^1_{K}(\Gamma)),  \] 
	where $b\text{-}\STR$ is the corresponding $b$-supertrace in the $b$-calculus setting. See for example \cite{Leichtnam} for more details.

Recall that every cyclic $n$-cocycle $\varphi$ of $\Gamma$ can be decomposed as $\varphi=\varphi_e+\varphi_d$, where $\varphi_d$ is the delocalized part of $\varphi$, i.e.,  
$$\varphi_d(\gamma_0,\gamma_1,\cdots,\gamma_{n})=\begin{cases}
	\varphi(\gamma_0,\gamma_1,\cdots,\gamma_{n})& \textup{ if } \gamma_0\gamma_1\cdots \gamma_{n}\ne e,\\
	0& \text{ otherwise.}
\end{cases}$$ 
The following theorem is an strengthening of the higher Atiyah-Patodi-Singer index formula in  \cite[Theorem 3.36]{CLWY}.
	\begin{theorem}\label{b-aps}
		Assume that $\varphi$ is a cyclic cocycle of $\Gamma$ with exponential growth rate $K_\varphi$.  If  the scalar curvature $k$ on $\partial M$ satisfies that
		\begin{equation}\label{eq:scalbd}
			\inf_{x\in\partial M}k(x)>\frac{16(K_\Gamma+K_\varphi)^2}{\tau^2_\partial},
		\end{equation} then
			\begin{equation}\label{eq:b-APS}
\ch_\varphi(\ind^\Gamma_{1,K_\varphi}(D))= \big\langle\varphi_e,\int_M \hat A\wedge\omega\big\rangle - \frac{1}{2} \big\langle\varphi,\widetilde\eta(\widetilde D_\partial)\big\rangle,
			\end{equation}
		where $\hat A$ is the associated $\hat A$-form on  
		$M$ and $\omega$ is an element in $\Omega^\ast(M)\otimes \Omega_\ast(l^1_{K_\varphi}(\Gamma))$ \textup{(}cf. \cite[Theorem 13.6]{Leichtnam}\textup{)} and $\varphi_e$ \textup{(}resp. $\varphi_d$\textup{)} is the localized \textup{(}reps. delocalized\textup{)} part of $\varphi$.
		Consequently, if both $\Gamma$ and $\varphi$ have sub-exponential growth, then the equality in line $\eqref{eq:b-APS}$ holds as long as $\widetilde D_\partial$ is invertible. 
	\end{theorem}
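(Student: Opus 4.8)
The plan is to reproduce the $b$-calculus proof of the higher Atiyah--Patodi--Singer index formula of \cite{CLWY} (based on Melrose's $b$-supertrace defect formula together with Quillen's superconnection formalism, cf.\ \cite{Lott1,Leichtnam}), the only genuinely new content being that every analytic limit and integral must now be controlled in the weighted $l^1$-Banach algebra $\Omega_\ast(l^1_{K_\varphi}(\Gamma))$ rather than in a Fr\'echet algebra of rapidly decaying elements; this is precisely what lets us weaken the hypothesis on $\varphi$ to exponential growth at the cost of the lower scalar-curvature bound \eqref{eq:scalbd}. The central object is the rescaled $b$-Connes--Chern character form $b\mhyphen\Ch_t(\widetilde D)=b\mhyphen\STR\bigl(e^{-(t\widetilde D+\nabla)^2}\bigr)$. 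The first step is to verify that, for each fixed $t>0$, $b\mhyphen\Ch_t(\widetilde D)$ is well defined and lies in the completed $\Omega_\ast(l^1_{K_\varphi}(\Gamma))$: expanding $e^{-(t\widetilde D+\nabla)^2}$ by Duhamel into a finite sum of terms of the form $\int\cdots e^{-s_i^2\widetilde D^2}[\widetilde D,\,\cdot\,]\,e^{-s_{i+1}^2\widetilde D^2}\cdots$, each kernel splits — exactly as in Lemma \ref{lemma:G_tfinite} and Lemma \ref{lemma:largespectralgap} — into a finite-propagation piece plus a Gaussian tail, hence has finite $\|\cdot\|_{1,K_\varphi}$-norm, and the $b$-supertrace converges because on the cylindrical end the indicial family of $\widetilde D_\partial$ is evaluated away from $0$.

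Next comes the transgression. One computes $\frac{d}{dt}b\mhyphen\Ch_t(\widetilde D)$ by Duhamel; Melrose's defect formula for the $b$-trace produces an exact form $b\mhyphen d(\cdots)$ plus a boundary defect term proportional to $b\mhyphen\STR$ of the corresponding superconnection heat operator for $\widetilde D_\partial$ on the cylinder $\sC$. Integrating over $t\in(0,\infty)$ one finds: (i) as $t\to0$, $b\mhyphen\Ch_t(\widetilde D)$ converges to the local index form $\int_M\hat A\wedge\omega$ (the usual Getzler-type local computation, insensitive to the Banach completion because the limit is a genuine differential form over the compact piece $M$); (ii) as $t\to\infty$, the invertibility and large spectral gap of $\widetilde D_\partial$ — which by the Lichnerowicz formula $\widetilde D_\partial^2=\nabla^\ast\nabla+\kappa/4$ is forced by \eqref{eq:scalbd} — make $e^{-(t\widetilde D+\nabla)^2}$ converge in $\|\cdot\|_{1,K_\varphi}$ to a superconnection representative of $\ind^\Gamma_{1,K_\varphi}(D)$, whose pairing with $\varphi$ is $\ch_\varphi(\ind^\Gamma_{1,K_\varphi}(D))$; and (iii) the accumulated defect term is, by definition, $-\tfrac12\widetilde\eta(\widetilde D_\partial)$. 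Pairing the resulting identity of forms with $\varphi$, which has exponential growth rate $K_\varphi$ and hence extends to a continuous functional on $\Omega_\ast(l^1_{K_\varphi}(\Gamma))$, kills the exact term by closedness of $\varphi$; and since the $t\to0$ density $\int_M\hat A\wedge\omega$ is built from the on-diagonal asymptotics of the heat kernel, its pairing with the delocalized part $\varphi_d$ vanishes, as in \cite[Theorem 3.36]{CLWY}, so that $\langle\varphi,\int_M\hat A\wedge\omega\rangle=\langle\varphi_e,\int_M\hat A\wedge\omega\rangle$. This produces \eqref{eq:b-APS}.

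The heart of the argument — and the main obstacle — is the uniform $l^1$-control of the $b$-superconnection heat operator on the cylindrical end as $t\to\infty$. Here one uses $|\widetilde D_\partial|>2(K_\Gamma+K_\varphi)/\tau_\partial$ together with Lemma \ref{lemma:largespectralgap} to obtain bounds $\|f(t\widetilde D_\partial)\|_{1,K_\varphi}\le\lambda\,e^{-\varepsilon t^2}$ for the Schwartz functions $f$ entering the Duhamel terms; this is exactly the weighted-$l^1$ analogue of the convergence statement \cite[Theorem 3.24]{CWXY}, and it is the point where the scalar-curvature hypothesis \eqref{eq:scalbd} is consumed, both to establish absolute convergence of the defect integral defining $\widetilde\eta(\widetilde D_\partial)$ and to push the $t\to\infty$ limit through $\|\cdot\|_{1,K_\varphi}$. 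Passing from estimates on $\widetilde D_\partial$ itself to estimates on the full superconnection heat operator requires combining the wave-propagation bounds with the exponential volume-growth bound \eqref{eq:KGammaconvenience}, just as in the proofs of Claim \ref{prop:q^2-q 1-norm} and Claim \ref{prop:invertible}. The $t\to0$ end and the finiteness of each $b\mhyphen\Ch_t$ need no new input: after the finite-propagation/Gaussian-tail split the weighted $l^1$-norm is dominated by the ordinary heat-kernel bounds over the compact part $M$.

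Finally, specializing to a delocalized $\varphi$ (so that $\varphi_e=0$) must recover Theorem \ref{thm:aps}; this serves as an internal consistency check and fixes the normalization constant $-\tfrac12$. It follows by identifying Lott's pairing $\langle\varphi,\widetilde\eta(\widetilde D_\partial)\rangle$ with the delocalized higher eta invariant $\eta_\varphi(\widetilde D_\partial)$ of Definition \ref{def:h-del-eta}, a comparison available from \cite[Section 6]{CWXY} (or re-derivable by running the above transgression argument directly on $\partial M$, where there is no $b$-defect and the interior term is absent).
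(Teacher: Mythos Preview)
Your outline follows the same $b$-calculus/transgression route as the paper, and the overall architecture is correct. The paper's proof is considerably more economical: it simply cites \cite[Theorem 3.36]{CLWY} for the identity
\[
\lim_{t\to\infty}\langle\varphi,\,b\mhyphen\Ch_t(\widetilde D)\rangle
=\langle\varphi_e,\textstyle\int_M\hat A\wedge\omega\rangle-\tfrac12\langle\varphi,\widetilde\eta(\widetilde D_\partial)\rangle,
\]
and then isolates the one genuinely new step, namely that the same limit equals $\ch_\varphi(\ind^\Gamma_{1,K_\varphi}(D))$, which it obtains by invoking Theorem~\ref{thm:main} (so that the index lives in $K_*(l^1_{K_\varphi}(\Gamma))$ and can be paired with $\varphi$) together with the Leichtnam--Piazza identification \cite[\S12,\,\S14]{Leichtnam}. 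You instead redevelop the CLWY side in detail; that is fine, but it is worth being explicit that the new content is concentrated in the $t\to\infty$ step and that it rests on Theorem~\ref{thm:main}.

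Two points to tighten. First, your description of the $t\to\infty$ limit is imprecise: $e^{-(t\widetilde D+\nabla)^2}$ does \emph{not} converge in $\|\cdot\|_{1,K_\varphi}$ on $\widetilde M_\infty$ (the Dirac operator there need not be invertible, and the limiting kernel projector need not lie in the weighted $l^1$-algebra). What converges is the \emph{pairing} $\langle\varphi,\,b\mhyphen\STR(e^{-(t\widetilde D+\nabla)^2})\rangle$, and its identification with $\ch_\varphi(\ind^\Gamma_{1,K_\varphi}(D))$ goes through the finite-propagation/cut-off representative of the index built in Section~\ref{sec:2} (this is the paper's footnote: the construction there is the $K$-theoretic surrogate of taking the $b$-trace). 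Second, your closing claim that the identification $\langle\varphi_d,\widetilde\eta(\widetilde D_\partial)\rangle=\eta_{\varphi_d}(\widetilde D_\partial)$ is ``available from \cite[Section~6]{CWXY}'' is not accurate: that equality was established in \cite[\S8]{CWXY} only under polynomial-growth hypotheses and was left open in general; under the exponential-growth assumption and the scalar-curvature bound \eqref{eq:scalbd} it is proved here as a separate proposition (immediately after Theorem~\ref{b-aps}) via Lemma~\ref{lm:con-est} and the homotopy \eqref{eq:path}.
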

\begin{proof}
	Under the assumptions of the theorem, we have the following equality 
	\begin{align}
		&b\text{-}\Ch_{t_1}(\widetilde D)-b\text{-}\Ch_{t_0}(\widetilde D)  \notag\\
		= &  - \frac{1}{2}\int_{t_0}^{t_1}\STR(\widetilde D_\partial e^{-(s\widetilde D_\partial +\nabla)^2})ds + d\int_{t_0}^{t_1} b\text{-}\STR(\widetilde D e^{-(\nabla +s\widetilde D)^2})ds \label{eq:bvar}
	\end{align}
	holds   in $\Omega_\ast(l^1_{K_{\varphi}}(\Gamma))$,  where $d\colon \Omega_\ast(l^1_{K_{\varphi}}(\Gamma)) \to \Omega_{\ast+1}(l^1_{K_{\varphi}}(\Gamma))$ is the differential on $\Omega_\ast(l^1_{K_{\varphi}}(\Gamma))$,  cf. \cite[Section 6]{EG93b}\cite[Proposition 14.2]{Leichtnam}. By pairing both sides of  $\eqref{eq:bvar}$ with $\varphi$, we have 
	\[ \big\langle \varphi, b\text{-}\Ch_{t_1}(\widetilde D) \big\rangle - \big\langle \varphi, b\text{-}\Ch_{t_0}(\widetilde D) \big\rangle
	=  - \frac{1}{2}\int_{t_0}^{t_1} \big\langle \varphi,  \STR(\widetilde D_\partial e^{-(s\widetilde D_\partial +\nabla)^2})\big \rangle ds. \]
	By \cite[theorem 3.36]{CLWY}, we have 
		\begin{equation}
		\lim_{t\to\infty}\big\langle \varphi, b\textup{-}\Ch_t(\widetilde D))\big\rangle =\big\langle\varphi_e,\int_W \hat A\wedge\omega\big\rangle -\frac{1}{2}\big\langle\varphi,\widetilde\eta(\widetilde D_\partial)\big\rangle.
	\end{equation}
Now by Theorem $\ref{thm:main}$, the higher index $\ind^\Gamma(\widetilde D)$ of $\widetilde D$ lies in $K_0(l^1_{K_\varphi}(\Gamma))$. By adapting the methods\footnote{In fact, the construction of the higher index $\ind^\Gamma(\widetilde D)$ in Section $\ref{sec:2}$ can be thought as the $K$-theoretic equivalent of taking $b$-trace. } from \cite[section 12 \& 14]{Leichtnam}, one shows that 
\[ 	\lim_{t\to\infty}\big\langle \varphi, b\textup{-}\Ch_t(\widetilde D))\big\rangle = \ch_\varphi(\ind^\Gamma(\widetilde D)).   \]
This finishes the theorem. 
\end{proof}
	
Let $\widetilde\eta(\widetilde D_\partial)$ be Lott's noncommutative differential higher eta invariant from above. Clearly, we have 
\[ \big\langle\varphi,\widetilde\eta(\widetilde D_\partial)\big\rangle =  \big\langle\varphi_e,\widetilde\eta(\widetilde D_\partial)\big\rangle + \big\langle\varphi_d,\widetilde\eta(\widetilde D_\partial)\big\rangle \]	
In the following, we shall prove under the same assumptions as in Theorem $\ref{b-aps}$ that 
\begin{equation}\label{eq:iden-eta}
\big\langle\varphi_d,\widetilde\eta(\widetilde D_\partial)\big\rangle =   \eta_{\varphi_d}(\widetilde D_\partial),
\end{equation}
	where $\eta_{\varphi_d}(\widetilde D_\partial)$ is the delocalized higher eta invariant of $\widetilde D_\partial$ at $\varphi_d$ given in Definition $\ref{def:h-del-eta}$. Consequently, for delocalized cyclic cocycles, Theorem $\ref{b-aps}$ recovers Theorem $\ref{thm:aps}$. 
	
The equality $\eqref{eq:iden-eta}$ for identifying two different formulas of the delocalized higher eta invariant was first established by the authors in  \cite[Section 8]{CWXY}, under the assumption that $\Gamma$ has polynomial growth and $\varphi$ is a delocalized cyclic cocycle with polynomial growth.\footnote{ Here under the condition that $\Gamma$ has polynomial growth, assuming $\varphi$ to have polynomial growth does not impose any serious restrictions. Indeed, if $\Gamma$ has polynomial growth, then every cyclic cohomology class of $\Gamma$ has a representative that has polynomial growth and furthermore any two such representatives differ by a cyclic coboundary with polynomial growth.   } It was stated as an open question whether the equality holds in general. The following proposition answers this question positively for all groups, but under the condition that the spectral gap of $\widetilde D_{\partial}$ is sufficiently large, i.e. the equality in line $\eqref{eq:scalbd}$ holds. 

In order to make the discussion more transparent and also consistent with that from \cite[Section 8]{CWXY}, we shall work with the periodic version of Lott's noncommutative differential higher eta invariant. 
\begin{definition}[{\cite[Section 4.4 \& 4.6]{Lott1}}]\label{def:Lotteta} For each $\beta>0$, we define $\widetilde \eta(\widetilde D_\partial, \beta)$ by the following integral
	\[ \widetilde  \eta(\widetilde D_\partial, \beta)= \beta^{1/2}\int_0^\infty \STR(\widetilde D_\partial e^{-\beta(t\widetilde D_\partial +\nabla)^2})dt.  \]
Then the periodic version of Lott's noncommutative differential higher eta invariant of $\widetilde D_\partial$ is defined to be 
$$\tilde\eta^{\per}(\widetilde D_\partial)=\int_0^\infty e^{-\beta}\tilde\eta(\widetilde D, \beta^2)d\beta.$$
\end{definition}

	In \cite[Section 8]{CWXY}, using the Laplace transform, the authors showed that  of $\langle \varphi_d,\widetilde \eta(\widetilde D_\partial)\rangle$ is \emph{formally} equal to the following integral 
\begin{equation}\label{eq:eta-v}
	\sqrt{\pi} \cdot \frac{m!}{\pi i}
	\int_0^\infty \varphi\#\tr\big(\dot v_s v_s^{-1}\otimes\big( (v_s-1)\otimes (v_s^{-1}-1) \big)^{\otimes m}\big)ds,
\end{equation}
where 
\begin{equation}
v_s=\begin{cases}
(\widetilde D+si)(\widetilde D-si)^{-1} & \textup{ if } s>0,\\ 
1 & \textup{ if } s=0.\\
\end{cases}
\end{equation}

Now we are ready to prove the equality $\eqref{eq:iden-eta}$. 
 
\begin{proposition}
	Under the same assumptions as in Theorem $\ref{b-aps}$, we have 
	\[ \big\langle\varphi_d,\widetilde\eta^\per(\widetilde D_\partial)\big\rangle =   \sqrt{\pi} \cdot \eta_{\varphi_d}(\widetilde D_\partial).\]
\end{proposition}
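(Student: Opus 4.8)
The equality to be proved identifies two a priori different integral expressions for the delocalized higher eta invariant: the one coming from Lott's superconnection formalism (periodic version, Definition \ref{def:Lotteta}) and the one in Definition \ref{def:h-del-eta}, formula \eqref{eq:eta_phi}, built from the path $\{u_s\}$. The strategy has three layers. First, I would invoke the formal computation already recorded in the excerpt: in \cite[Section 8]{CWXY} it is shown that $\langle \varphi_d, \widetilde\eta^{\per}(\widetilde D_\partial)\rangle$ is \emph{formally} equal to the integral \eqref{eq:eta-v} written in terms of the Cayley transform path $v_s = (\widetilde D_\partial + si)(\widetilde D_\partial - si)^{-1}$. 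So the real content is (a) justifying that this formal manipulation (a Laplace-transform identity plus an interchange of integrals) is legitimate under the hypothesis \eqref{eq:scalbd}, and (b) showing that the resulting integral \eqref{eq:eta-v} equals $\sqrt\pi\cdot\eta_{\varphi_d}(\widetilde D_\partial)$, i.e.\ that the $v_s$-path and the $u_s$-path compute the same delocalized eta invariant.

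\textbf{Step (b): from $v_s$ to $u_s$.} This is the cleaner half and should be done first. By Lemma \ref{lm:con-est}, under the curvature assumption the path $v = \{v_s\}_{s\in[0,\infty)}$ lies in $(\cB_{L,0}(\partial\widetilde M)^\Gamma_{K_\varphi})^+$ and represents the same class $\rho_{1,K_\varphi}(g_\partial)$ in $K_1$ as $u = \{u_s\}$; indeed the homotopy \eqref{eq:path} connecting them is written down explicitly there. Since, by \cite[Section 6]{CWXY} (cf.\ the use of $\bar\eta_\varphi$ in the proof of Theorem \ref{thm:aps}), the higher eta invariant is a well-defined homomorphism $\bar\eta_{\varphi_d}\colon K_1(\cB_{L,0}(\partial\widetilde M)^\Gamma_{K_\varphi})\to\mathbb C$ — its value on a class being computed by the integral \eqref{eq:eta_phi} applied to \emph{any} representing path of invertibles for which the integrand converges absolutely — and since the $v_s$-integrand \eqref{eq:eta-v} also converges absolutely under \eqref{eq:scalbd} (by the same Gaussian-decay estimates of Lemma \ref{lemma:largespectralgap} that control $(\widetilde D_\partial^2+t^2)^{-1}$, exactly as in the proof of Lemma \ref{lm:con-est}), the integral \eqref{eq:eta-v} divided by $\sqrt\pi$ must equal $\bar\eta_{\varphi_d}([v]) = \bar\eta_{\varphi_d}([u]) = \eta_{\varphi_d}(\widetilde D_\partial)$. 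The one technical point to nail here is homotopy invariance of the pairing: one checks that along the homotopy \eqref{eq:path} the relevant integrands stay uniformly absolutely integrable (again via the weighted $l^1$-estimates already established), so that the transgression argument of \cite[Section 6]{CWXY} applies verbatim.

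\textbf{Step (a): the main obstacle — justifying the Laplace-transform manipulation.} The heart of the matter, and where I expect the real work, is upgrading the \emph{formal} identity of \cite[Section 8]{CWXY} between $\langle\varphi_d,\widetilde\eta^{\per}(\widetilde D_\partial)\rangle$ and \eqref{eq:eta-v} to a genuine one. In \cite{CWXY} this was done only assuming $\Gamma$ and $\varphi$ have polynomial growth, which made all the operators and their derivatives lie in rapidly-decaying (Schwartz-type) algebras where Fubini and differentiation under the integral are automatic. Here $\Gamma$ and $\varphi$ may have exponential growth, and the substitute is precisely the hypothesis \eqref{eq:scalbd}: it forces the spectral gap $|\widetilde D_\partial| > 4(K_\Gamma+K_\varphi)/\tau_\partial$, which by Lemma \ref{lemma:largespectralgap} gives estimates of the form $\|e^{-y(t^2\widetilde D_\partial^2 + 1)}\|_{1,K_\varphi} \le C e^{-y(\varepsilon t^2 + 1)}$ with $\varepsilon > 0$ — enough decay, uniformly in the auxiliary parameters $t,\beta,y$, to (i) make the heat-kernel superconnection integrals defining $\widetilde\eta^{\per}$ absolutely convergent in the Banach algebra $\Omega_*(l^1_{K_\varphi}(\Gamma))$, (ii) justify interchanging the $\beta$-integral of Definition \ref{def:Lotteta} with the $t$-integral and with the componentwise Laplace transform used to pass to the $v_s$-picture, and (iii) differentiate the families $s\mapsto v_s$ under the integral sign with respect to the $\|\cdot\|_{1,K_\varphi}$-norm. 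Concretely I would: expand $e^{-\beta(t\widetilde D_\partial+\nabla)^2}$ by Duhamel into finitely many terms (the superconnection $\nabla$ contributes only finitely many powers because $\varphi_d$ is a cocycle of fixed degree $2m$, so only the degree-$2m$ component of $\Omega_*$ survives the pairing); apply the change of variables and the scalar Laplace-transform identity $\int_0^\infty e^{-\beta}\beta^{1/2}(\cdots)\,d\beta$ term by term, each term being dominated by the Gaussian bounds above; and collect the result into \eqref{eq:eta-v}. The bookkeeping is exactly parallel to \cite[Section 8]{CWXY}; the only genuinely new ingredient is that every estimate that was "obvious by polynomial growth" there is now supplied by Lemma \ref{lemma:largespectralgap} under \eqref{eq:scalbd}. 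Combining (a) and (b) yields $\langle\varphi_d,\widetilde\eta^{\per}(\widetilde D_\partial)\rangle = \sqrt\pi\cdot\eta_{\varphi_d}(\widetilde D_\partial)$, as claimed.
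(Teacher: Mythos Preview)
Your proposal is correct and takes essentially the same approach as the paper: step (a) justifies the formal Laplace-transform identity of \cite[Section 8]{CWXY} via the weighted $l^1_{K_\varphi}$-estimates made available by the curvature hypothesis, and step (b) identifies the $v_s$- and $u_s$-integrals via the homotopy \eqref{eq:path}. The only difference is presentational: for step (b) the paper writes out the transgression identity $\partial_s\big(\varphi\#\tr(w^{-1}\partial_\lambda w\otimes\cdots)\big) = \partial_\lambda\big(\varphi\#\tr(w^{-1}\partial_s w\otimes\cdots)\big)$ directly and checks the boundary terms vanish, rather than routing through the abstract homomorphism $\bar\eta_{\varphi_d}$ --- but this is the same argument, and you correctly flag that homotopy-invariance check as the technical crux.
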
	
\begin{proof}
	
By the estimates from Lemma \ref{lemma:local_appro}, we see that the integrand in line \eqref{eq:eta-v} is finite for each $t>0$ and the integral converges for large $t$ (cf. \cite[Lemma 3.27]{CWXY}).
In Lemma $\ref{lm:con-est}$, under the large scalar curvature assumption in  line $\eqref{eq:scalbd}$,  we prove that   $\{v_s\}_{s\in[0,+\infty)}$ defines  an invertible element in $(\cL_{L,0}(\partial\widetilde M)^\Gamma_{K_\varphi})^+$. Consequently, it follows that  the integrand  in line \eqref{eq:eta-v} is continuous for all $t\in [0, \infty)$, hence the integral also converges for small $t$.  This proves that $\langle \varphi_d,\widetilde \eta^\per(\widetilde D_\partial)\rangle$ is equal to the integral in line \eqref{eq:eta-v}.

It remains to verify that the integral in line \eqref{eq:eta-v} is equal to $ \eta_{\varphi_d}(\widetilde D_\partial)$ from Definition $\ref{def:h-del-eta}$. This can be done by applying a standard transgression formula as follows. Let $w_{\lambda, s}$, $\lambda\in [0, 1]$ and $s\in [0, \infty)$,   be the path of invertible elements connecting $v_s$ and $u_s$ from line $\eqref{eq:path}$. A straightforward calculation shows that 
\begin{equation*}
	\partial_s\left(\varphi\#\tr(w^{-1}\partial_\lambda w \otimes(w\otimes w^{-1})^{\otimes m})\right)=
	\partial_\lambda\left(\varphi\#\tr(w^{-1}\partial_s w\otimes(w\otimes w^{-1})^{\otimes m})\right).
\end{equation*}	
It follows that 
\begin{align}
	& \int_0^T  \varphi\#\tr(\dot{v} v^{-1}\otimes (v\otimes v^{-1})^{\otimes m})ds - \int_0^T  \varphi\#\tr(\dot{u}u^{-1}\otimes (u\otimes u^{-1})^{\otimes m})ds \notag\\
	& =
	\int_0^1 \widetilde{\varphi\#\tr}(w^{-1}\partial_\lambda w\otimes(w\otimes w^{-1})^{\otimes m})\Big|_{s=0}^{s=T} d\lambda \label{eq:trans}
\end{align}
By the $l^1_K$-norm estimates above (see also Lemma $\ref{lemma:local_appro}$),  we have 
$$\varphi\#\tr(w^{-1}\partial_\lambda w\otimes(w\otimes w^{-1})^{\otimes m})\to 0, $$
as $s\to \infty$. Also, note that $w_{\lambda, 0} \equiv 1$ for all $\lambda\in [0, 1]$. This implies that the right hand side of the equality $\eqref{eq:trans}$ goes to zero as $T$ goes to infinity. This proves that the integral in line \eqref{eq:eta-v} is equal to $ \eta_{\varphi_d}(\widetilde D_\partial)$ from Definition $\ref{def:h-del-eta}$, hence completes the proof.

\end{proof}

\end{document}